\documentclass[11pt,letterpaper]{amsart}
\usepackage{amssymb,xfrac}
\usepackage{amsmath}
\usepackage{breqn}
\usepackage{xcolor}
\usepackage{graphics, cite, bookmark}
\usepackage{epsfig}
\usepackage{enumerate}
\usepackage{hyperref}
\usepackage{epstopdf}
\usepackage{ esint }
\usepackage{amsthm}
\usepackage{lmodern}
\usepackage[title]{appendix}

\newtheorem{theorem}{Theorem}[section]
\newtheorem*{theorem*}{Theorem}
\newtheorem{lemma}[theorem]{Lemma}
\newtheorem{proposition}[theorem]{Proposition}

\theoremstyle{definition}

\theoremstyle{remark}
\newtheorem{remark}{Remark}[section]

\numberwithin{equation}{section}
\allowdisplaybreaks

\newcommand{\R}{\mathbb{R}}
\newcommand{\N}{\mathbb{N}}
\renewcommand{\S}{\mathbb{S}}

\newcommand{\sym}{\mathrm{sym}}
\newcommand{\Sym}{\mathrm{Sym}}

\newcommand{\sdel}{\delta^{\sfrac{1}{2}}}
\usepackage{bbm}

\begin{document}

\title
[]{Flexibility of Codimension One $C^{1,\theta}$ Isometric Immersions}

\author{Dominik Inauen}

\address{Dominik Inauen, Universit\'e de Fribourg, CH-1700, Fribourg, Switzerland.  E-mail:{\tt dominik.inauen@unifr.ch}}

\begin{abstract}
We study the problem of constructing $C^{1,\theta}$ isometric immersions of Riemannian metrics on $n$-dimensional domains into $\R^{n+1}$. While the classical Nash--Kuiper theorem established the flexibility of $C^1$ isometries, subsequent work has extended this to $C^{1,\theta}$ isometries for certain $\theta$, though the optimal exponent remains unknown. In this work we show that any short immersion can be uniformly approximated by $C^{1,\theta}$ isometric immersions for $\theta< 1/(1+2(n-1))$,  improving upon the previously known exponent for $n\geq 3$. The improvement is obtained via a convex integration scheme incorporating a refined iterative integration by parts procedure resting on a detailed structural analysis of error terms and the interaction of multiple frequency scales.
\end{abstract}
\subjclass{ 58D10, 53C21, 57N35}

\date{\today}

\maketitle 

\section{Introduction}
Let $(M,g)$ be a smooth $n$-dimensional Riemannian manifold and $m\in \N$. An isometric immersion $u:M\to \R^m$ is a continuously differentiable map $u$ such that the induced metric $u^\sharp e$ equals $g$, where $e $ denotes the Euclidean metric on $\R^m$.  In local coordinates on $\Omega\subset\R^n$, this condition reads
\begin{equation}\label{e:isometry}
    Du^TDu = g\,,
\end{equation}
where $Du$ is the Jacobian of the map $u:\Omega\to \R^m$, and $g$ is identified with the positive definite symmetric matrix field of coefficients representing the metric in these local coordinates.

Equation \eqref{e:isometry} is a system of $n_*:= n(n+1)/2$ non-linear partial differential equations in $m$ unknowns. For low codimension $m=n+1$ and $n\geq 3$ it is overdetermined, having $n_*>m$ equations. Nevertheless, the groundbreaking Nash-Kuiper theorem \cite{Nash54,Kuiper} established a surprising abundance of solutions to \eqref{e:isometry}: in any $C^0$-neighborhood of any \emph{short} immersion $\underline u$ (meaning $\underline u^\sharp e \leq g$ in the sense of quadratic forms), there exists an isometric immersion $u$ of regularity $C^1$. 

This so-called flexibility contrasts classical  rigidity results for smooth isometric immersions. A prominent example
is the rigidity theorem of the Weyl problem: if $g$ is any metric of positive Gaussian curvature on $\S^2$, then any two isometric immersions $u,v:\mathbb S^2\to \R^3$ of class $C^2$ differ only by  a rigid motion, a result due to  Cohn-Vossen \cite{CohnVossen1927} and Herglotz \cite{Herg1943} (see also \cite{Pogo1951} for a more general result, implying the above). 

This dichotomy of flexibility at low regularity and rigidity at high regularity motivates the following question: is there a critical H\"older threshold  $\theta_0\in ]0,1[$ separating the two regimes? More precisely, does there exist $\theta_0$ such that 
\begin{enumerate}[(i)]
\item isometric immersions $u\in C^{1,\theta}(M,\R^{n+1})$ exhibit (some sort of) rigidity if $\theta>\theta_0$,
\item the Nash--Kuiper theorem extends to $C^{1,\theta}$ for all $\theta<\theta_0$?
\end{enumerate}

 Analogous phenomena occur in other geometric PDE, such as the Monge--Amp\`ere equation, where sufficiently regular solutions with positive right hand side are rigid (convex), whereas very weak $C^{1,\theta}$ solutions exhibit flexibility for low $\theta$, see \cite{LewickaPakzad,MartaSystems,Martasystems2,Martasystems3,IL25,ILfull}. A particularly well-studied parallel arises in fluid dynamics: Onsager's conjecture for the incompressible Euler equations asserts that $C^{0,\theta}$ weak solutions conserve energy (rigidity) for $\theta>1/3$ , while many non-conservative solutions exist for $\theta < 1/3$ (flexibility). The rigidity was established in \cite{CET1994} and flexibility in \cite{Isett2018} (see also \cite{BDSV2019}) following \cite{DeSz2012,DS14,BDIS2015, BDS16}.

Both the existence and value of such a threshold  $\theta_0$ for isometric immersions remain unknown. Parallels with Onsager's conjecture, in particular the shared convex integration methodology, suggest $\theta_0=1/3$ as a potential threshold. 
On the other hand, \cite{DI2020,CaoIn2024} demonstrate that $C^{1,\sfrac{1}{2}}$ is a critical space \emph{in a suitable sense}, suggesting $\theta_0 = 1/2$ (see also \cite[Question 36]{Gromov2017}, \cite[Section 10]{CShighdim}, \cite{InauenMenon},\cite{CHIDarboux}). 

\textbf{$C^{1,\theta}$ isometries: known results.} The first results on isometric immersions of intermediate regularity $C^{1,\theta}$ were established by Yu. F. Borisov in the mid-20th century. He proved in \cite{Borisov1958,Borisov1958ii,Borisov1959} that the rigidity theorem of Cohn-Vossen and Herglotz remains true for isometries of class $C^{1,\theta}$ for $\theta >2/3$ (see also \cite{CDS} for a short proof, and \cite{Reza}). 

On the other hand, he announced in \cite{Borisov1965} that the Nash-Kuiper theorem remains true for isometric immersions of open, bounded $n$-dimensional sets of regularity $C^{1,\theta}$ for 
$\theta< 1/(1+n(n+1))$,
the Borisov-exponent, with a potential improvement to $\theta<1/5$ for $n=2$. These claims were confirmed in \cite{CDS} and \cite{DIS} and extended to compact manifolds in \cite{CaoSze2022}. 

Recently, \cite{CaoHirschInauen25} improved the exponent to $\theta< 1/(1+n(n-1))$,
achieving the Onsager threshold $\theta<1/3$ when $n=2$.

\textbf{Higher codimension.}
For $m>n+1$, the flexibility regime extends considerably. Existence results for smooth isometries are due to Nash \cite{Nash1956}, with codimension bounds improved by Gromov and G\"unther \cite{GromovPdr, Guenther}. Results for $C^{1,\theta}$ isometries in higher codimension can be found in \cite{Kaellen,DI2020,CaoIn2024,CaoSz2023,Zhiwen,MartaFull}.  

\textbf{Main result.}
In this paper, we focus on the most constrained case of codimension one, and improve the exponent of \cite{CaoHirschInauen25} when $n\geq 3$. More precisely, we have
\begin{theorem}\label{t:main}
    Let $n\geq2$ and let $g\in C^2(\bar \Omega,\Sym_n^+)$ be a Riemannian metric on a smooth, bounded open set $\Omega\subset\R^n$. Then for any short immersion $\underline u\in C^1(\bar\Omega,\R^{n+1})$, any $\varepsilon>0$ and any H\"older exponent
    \begin{equation}\label{e:ourexponent}
        \theta< \frac{1}{1+2(n-1)}\,,
    \end{equation} there exists an isometric immersion $u\in C^{1,\theta}(\bar\Omega,\R^{n+1}) $ such that 
    \begin{equation}\label{e:c0closedness}
        \|u-\underline u\|_0 < \varepsilon.
    \end{equation}
\end{theorem}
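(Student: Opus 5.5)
The plan is to run a Nash--Kuiper type convex integration scheme. We build a sequence of immersions $u_q$ with metric errors $h_q := g - Du_q^TDu_q$ satisfying inductive estimates of the form
\[
\|g-Du_q^TDu_q-\delta_{q+1}\,\mathrm{Id}\|_{0}\lesssim \sigma\,\delta_{q+1},\qquad \|D^2u_q\|_0\le \delta_q^{1/2}\lambda_q,
\]
with $\delta_q=a^{-b^q}$ and $\lambda_q=a^{cb^{q+1}}$. The exponent is then read off by interpolation: $u_q\to u$ in $C^{1,\theta}$ for $\theta<\log\delta_{q+1}^{-1/2}/\log(\lambda_{q+1}\delta_{q+1}^{-1/2})$.

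The first step of each stage mollifies $u_q$ and $g$ at a length scale $\ell$. We then decompose the positive definite error $\delta_{q+1}\mathrm{Id}+\mathcal E$ into $n_*$ primitive metrics $a_i^2\,\nu_i\otimes\nu_i$, using the standard lemma on decomposition near the identity. Next we add the primitive metrics one at a time by Nash/Kuiper corrugations in the normal direction, with frequencies $\mu_1<\mu_2<\dots<\mu_{n_*}=\lambda_{q+1}$. The Borisov exponent comes from needing all $n_*=n(n+1)/2$ steps; \cite{CaoHirschInauen25} reduces the count to $n(n-1)/2$ effective steps.

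To reach $1+2(n-1)$ we want only about $2(n-1)$ effective frequency-raising steps, i.e.\ linear rather than quadratic in $n$. The idea is to choose coordinates (after a diagonalization step using the conformal/diagonal structure, as in Cao--Hirsch--Inauen) so that the error splits into $n$ diagonal entries and off-diagonal parts. The diagonal part we would absorb without cost. The off-diagonal parts we would group into families handled simultaneously along each coordinate direction $e_k$: one corrugation per row, with $n-1$ rows, and each row requiring two frequency scales.

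The key mechanism is an iterated integration-by-parts (``$\mathrm{div}$-type'') procedure. The error created by a corrugation at frequency $\mu$ has the form $a^2\,\mathrm{osc}(\mu x\cdot\nu)\,D^2\phi$ plus lower order terms. We would expand it into a series in $\mu^{-1}$. The terms that cannot be put into a harmless form we would show structurally lie in the span of a small set of directions already scheduled for later steps. They are then absorbed into the next primitive metric rather than forcing a new frequency. Iterating $N$ times produces errors of size $(\ell\mu)^{-N}$, made small by taking $N$ large.

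The main obstacle I expect is this structural analysis. We must show that the interaction terms between different frequency scales (products of oscillations at $\mu_i$ and $\mu_j$) either integrate by parts to arbitrarily high order or stay within the prescribed subspace. This needs a careful bookkeeping of which matrix entries each error term touches, and a check that the perturbation stays normal-compatible in codimension one (only one normal direction is available). Once this stage proposition holds with gain $\delta_{q+2}/\delta_{q+1}\sim(\mu_{k}/\mu_{k+1})$ over $K=2(n-1)$ effective steps, we would choose parameters to get $\lambda_{q+1}\sim\lambda_q(\delta_q/\delta_{q+1})^{(1+2(n-1))/2}$. The exponent \eqref{e:ourexponent} then follows for $b\downarrow1$. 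Finally, we would obtain the $C^0$ closeness \eqref{e:c0closedness} from $\|u_{q+1}-u_q\|_0\lesssim\delta_{q+1}^{1/2}\lambda_{q+1}^{-1}$ after a preliminary Nash--Kuiper step making $\underline u$ strictly short and nearly isometric.
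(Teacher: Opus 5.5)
Your overall frame is the paper's: Nash stages, CHI-type iterated integration by parts with remainders absorbed into later primitive metrics, and interpolation at the end. But the proposal leaves out precisely the mechanism that produces the improvement.

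\textbf{The missing mechanism.} You call it ``the main obstacle'' that cross-scale interaction terms must either integrate by parts or land in the span of later primitive metrics. You give no reason why they would, and an expansion in $\mu^{-1}$ does not provide one. After adding the $j$-th primitive metric (direction $\xi$, frequency $\nu_j$), the dangerous error is
\[
2\sdel\frac{a_j\gamma_2(\nu_jx\cdot\xi)}{\nu_j}\,\sym(Du_{j-1}^TD\zeta)=-2\sdel\frac{a_j\gamma_2(\nu_jx\cdot\xi)}{\nu_j}\,D^2u_{j-1}\cdot\zeta .
\]
Its leading part, coming from the earlier oscillations inside $D^2u_{j-1}$, is a sum of terms $\delta\frac{\nu_k}{\nu_j}\gamma_2(\nu_jx\cdot\xi)\,A_k(\nu_kx\cdot\eta_k,x)\odot\eta_k$ with $A_k(t,x)\propto a_ja_k\gamma_2''(t)\eta_k$.

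Integrating by parts in the direction $\xi$ gains only $\nu_k/\nu_j$ per step, not $(\ell\mu)^{-1}$, because the derivative falls on the intermediate phase $\nu_kx\cdot\eta_k$. So iterating helps only if the $\partial_t$-term again has the form $A'(\nu_kx\cdot\eta_k,x)\odot\eta_k$, and the part not of the form $\alpha\odot\xi$ lies in the span of primitive metrics still to be added.

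The idea you are missing is the grouping into subfamilies $\{\eta_{ij}\}_{j\ge i}$, $\eta_{ij}=(e_i+e_j)/|e_i+e_j|$, combined with Lemma~\ref{l:algebraicdecomp}:
\begin{itemize}
\item If $\xi$ and $\eta_k$ both vanish in the coordinates $<i$ and $\xi_i\neq0$, then $A\odot\eta_k=\tilde A\odot\xi+A^\perp\odot\eta^\perp$ with $A^\perp\odot\eta^\perp\in\mathcal V_{i+1}$. Hence the $\partial_t$-terms reproduce the same structure (Proposition~\ref{p:IBP}).
\item The only non-absorbable (mixed-block) errors come from $D_xA$ at the amplitude scale $\lambda$, so they are $O(\lambda/\nu_j)$. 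They are made small by one frequency jump to $K^J\lambda$ for the first direction $\eta_{ii}=e_i$ of each family $i\ge2$. For $i=1$ the mixed block is empty.
\item The $\mathcal V_{i+1}$-remainders are absorbed by redefining the amplitudes $b_{kl}=(a_{kl}^2-L_{kl}(\mathcal F^{i-1}))^{1/2}$ of the later families.
\item Consecutive frequencies within a family differ only by $K$, where $K^J\approx\delta_q/\delta_{q+1}$.
\end{itemize}
This gives exactly $n-1$ expensive steps.

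\textbf{Ingredients that would fail.}
\begin{itemize}
\item No diagonalisation of the defect is available in general dimension. The conformal-coordinate trick of \cite{DIS} is two-dimensional, and \cite{CaoHirschInauen25} does not use one.
\item Even for a diagonal defect, your accounting is reversed. The diagonal metrics $e_k\otimes e_k=\eta_{kk}\otimes\eta_{kk}$, $k\ge2$, are exactly the steps that must pay the full factor $\delta_q/\delta_{q+1}$. The previously added oscillations, in directions with a nonzero component $<k$, produce errors with entries having an index $<k$; these are neither of the form $\alpha\odot e_k$ nor in $\mathcal V_{k+1}$. It is the off-diagonal $\eta_{kl}$, $l>k$, that come almost for free.
\item In codimension one each corrugation adds a single primitive metric, so ``one corrugation per row'' is not available.
\item To cancel symmetric gradients you need the tangential corrector $Tw$ in the ansatz \eqref{d:perturbation}, which contributes $2\delta\,\sym(Dw)$ to the induced metric. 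Your proposal does not provide this.
\item Your count is inconsistent. $2(n-1)$ steps each costing $\delta_q/\delta_{q+1}$ would only give $\theta<1/(1+4(n-1))$. The relation $\lambda_{q+1}\sim\lambda_q(\delta_q/\delta_{q+1})^{(1+2(n-1))/2}$ you write at the end corresponds to $n-1$ such steps. That is what the paper proves, via $\|v\|_2\lesssim\delta^{1/2}\lambda d^{-1}K^{J(n-1)+n^2}$.
\end{itemize}
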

\begin{remark}
As in \cite{CaoHirschInauen25}, Theorem~\ref{t:main} extends to the case of embeddings by standard arguments. 
Moreover, combining the present construction with the global framework developed in \cite{CaoSze2022}, the result carries over to compact manifolds. 
Finally, the same mechanism applies in the setting of the very weak Monge–Amp\`ere equation, yielding an analogous improvement of the H\"older exponent.
\end{remark}

The improvement relies on a refined integration-by-parts mechanism as compared to \cite{CaoHirschInauen25}, which is based on a more precise structural analysis of the error terms arising in the convex integration scheme. 
 In what follows, we provide a heuristic sketch of the proof: first recalling the classical Nash iteration (Section~\ref{ss:Nash}), then explaining $C^{1,\theta}$ convergence (Section~\ref{ss:introconvergence}), reviewing the iterative integration by parts of \cite{CaoHirschInauen25}, and finally presenting the refinement leading to Theorem~\ref{t:main}. Since the details of the construction are very technical, the discussion is heuristic.
 
\subsection{The Nash iteration}\label{ss:Nash}

Following Nash \cite{Nash54}, we construct the isometric immersion $u$  as the limit of an iterative scheme, the  Nash iteration. 
Starting from the initial short map $\underline u$, we build a sequence of smooth short immersions $\{u_q\}_{q\in\mathbb N}$ converging in $C^{1,\theta}$ and such that the \emph{metric deficit} $
g - Du_q^T Du_q$
decays exponentially:
\begin{equation}
    \|g-Du_q^T Du_q\|_0 
    \le K^{-1}\|g-Du_{q-1}^T Du_{q-1}\|_0
\end{equation}
for some large constant $K\ge 1$. 
In fact, for technical reasons, we enforce a double-exponential decay of the defect.

Given $u_{q-1}$, the construction of $u_q$, called a \emph{stage}, begins by decomposing the positive definite defect into a finite sum of so-called primitive metrics:
\begin{equation}\label{e:primintrodecomp}
    g-Du_{q-1}^T Du_{q-1} 
    = \sum_{i=1}^N a_i^2\, \eta_i\otimes\eta_i,
\end{equation}
where the coefficients $a_i$ are smooth and $\eta_i\in\mathbb S^{n-1}$. An elementary linear algebra argument (see Lemma \ref{l:basicdecomp}) shows that if the defect is sufficiently close to a constant positive definite matrix (something that can be arranged along the sequence by translation and rescaling), then the decomposition holds with $N=n_*=n(n+1)/2$.

We then add $N$ corrugations $w_{q,1},\dots,w_{q,N}$ sequentially:
\[
u_{q,0}=u_{q-1}, 
\qquad 
u_{q,i}=u_{q,i-1}+w_{q,i}, 
\quad i=1,\dots,N,
\]
and set $u_q=u_{q,N}$.

Each perturbation $w_{q,i}$ is designed to increase the induced metric by the corresponding primitive metric:
\begin{equation}\label{e:introinduced}
Du_{q,i}^T Du_{q,i}
=
Du_{q,i-1}^T Du_{q,i-1}
+
a_i^2 \eta_i\otimes\eta_i
+
E_i,
\end{equation}
up to an error term $E_i$ satisfying
\begin{equation}\label{e:introprimerror}
\|E_i\|_0
\le (NK)^{-1}
\|g-Du_{q-1}^T Du_{q-1}\|_0,
\end{equation}
which yields \eqref{e:primintrodecomp} by a summation over $i$ .

The basic ansatz for a perturbation achieving \eqref{e:introinduced}–\eqref{e:introprimerror} is
\begin{equation}\label{d:basicpert}
w_{q,i}
=
\frac{a_i}{\nu_i}
\left(
\gamma_1(\nu_i x\cdot\eta_i)\tau_i
+
\gamma_2(\nu_i x\cdot\eta_i)\zeta_i
\right),
\end{equation}
where $\nu_1\le\dots\le\nu_N$ are large frequency parameters, 
$\gamma_1,\gamma_2\in C^\infty(\mathbb S^1)$ are oscillatory functions, 
and $\tau_i$ and $\zeta_i$ are tangential and normal vector fields to $u_{q,i-1}$, respectively.
From \eqref{d:basicpert} and \eqref{e:primintrodecomp} one sees that
\[
\|Du_q-Du_{q-1}\|_0
\le
C\|g-Du_{q-1}^T Du_{q-1}\|_0^{1/2}
+
\frac{C}{\nu_1},
\]
so the sequence is Cauchy in $C^1$ provided the frequencies are chosen large enough.

\subsection{$C^{1,\theta}$ convergence}\label{ss:introconvergence} We now explain how to obtain a sequence converging in $C^{1,\theta}$. A  systematic choice of frequency parameters $\nu_i$ is crucial. To see why, set \[\delta_q =K^{-q}\|g-Du_0^TDu_0\|_0\] so that the sequence $\{u_q\}_q$ of the previous subsection satisfies 
\[ \|g-Du_q^TDu_q\|_0\leq  \delta_q\,.\] 
Now observe that if $\nu_i$ are  large, then $\|D^2u_q\|_0 \leq C\delta_{q-1}^{1/2} \nu_N$, since the leading  contribution to $D^2u_q$ arises when both derivatives fall on the rapidly oscillating perturbation. Thus, controlling the growth of frequencies controls the blow-up of the second derivative.

This requires an analysis of the error terms $E_i$. An estimate disregarding the specific structure of $E_i$ yields 
\[\|E_i\|_0\leq C\delta_{q-1}\frac{\nu_{i-1}}{\nu_i}\,,\]
with $\nu_0 = \|D^2u_{q-1}\|_0/\delta_{q-1}^{1/2}$.
In order to satisfy \eqref{e:introprimerror}, one is therefore forced to choose an increasing sequence $\nu_i = K\nu_{i-1}$, resulting in the exponential blowup 
\[\|D^2u_q\|_0\leq C\|D^2u_{q-1}\|_0K^N\leq C(K^N)^q\,.\]
Since however $\|Du_q-Du_{q-1}\|_0\leq C\delta_{q-1}^{1/2} = C K^{-(q-1)/2}$, an interpolation between the $C^1$ and $C^2$ norms shows that this sequence is Cauchy in $C^{1,\theta}$ for all $\theta< 1/(1+2N)$. Taking $N=n_*$ yields the Borisov exponent. In \cite{DIS},  a diagonalization of the defect (available in two dimensions via isothermal coordinates) reduces $N$ from $3$ to $2$, thereby explaining the exponent $1/5.$

\subsection{Iterative integration by parts}\label{ss:introibp}

In \cite{CaoHirschInauen25}, a better regularity is achieved by an iterative integration by parts process. 
The key observation is that all (except one) leading-order error terms in $E_i$ are of the form
\[
\gamma(\nu_i x\cdot\eta_i)\, M,
\]
where $\gamma\in C^\infty(\mathbb S^1)$ has mean zero, and $M$ is a symmetric matrix oscillating at a lower frequency $\mu\le \nu_{i-1}$.

Any symmetric matrix $M$  can be decomposed as \begin{equation}\label{e:primsplit}
M = \sym(\alpha(M)\otimes\eta_i) + \pi(M),
\end{equation}
for some vector $\alpha(M)\in\mathbb R^n$ and a remainder $\pi(M)$. 
Writing $\gamma^1$ for the mean-zero primitive of $\gamma$, one obtains
\[
\gamma(\nu_i x\cdot\eta_i) M = \sym\left(D\left(\frac{\gamma^1(\nu_i x\cdot\eta_i)}{\nu_i}\alpha(M)\right)\right)-\frac{\gamma^1(\nu_i x\cdot\eta_i)}{\nu_i}\sym(D\alpha(M))+\gamma\,\pi(M).
\]
This is referred to in \cite{CaoHirschInauen25} as an integration by parts. 
Since $M$ oscillates at frequency $\mu$, the second term on the right-hand side is now of smaller size $\|M\|_0\, \mu/\nu_i$. Iterating this procedure $J$ times yields
\[\gamma(\nu_i x\cdot\eta_i) M=
\sym(D w_c) + \mathcal E + \mathcal F,
\]
where $w_c$ is a corrector vector field, 
\[
\|\mathcal E\|_0 \le C \Big(\frac{\nu_{i-1}}{\nu_i}\Big)^J \|M\|_0,
\]
and $\mathcal F$ is a remainder of size comparable to $\|M\|_0$.

In \cite{CaoHirschInauen25}, a modified perturbation (different from \eqref{d:basicpert}) is chosen so that an additional symmetric gradient appears in the induced metric:
\[
Du_{q,i}^T Du_{q,i}
=
Du_{q,i-1}^T Du_{q,i-1}
+
a_i^2\eta_i\otimes\eta_i
+
\sym(Dw)
+
E_i.
\]
Choosing $w=-w_c$ cancels the leading order error term $\gamma(\nu_i x\cdot\eta_i)M$ up to the much smaller error $\mathcal E$ and the remainder $\mathcal F$. 
Applying this cancellation to all leading error terms in $E_i$ shows that, disregarding $\mathcal F$, condition \eqref{e:introprimerror} can be achieved by choosing
\[
\nu_i = K^{1/J}\nu_{i-1}.
\]
Since $J$ can be taken arbitrarily large, this allows the frequencies to grow arbitrarily slowly, that is, $\nu_i\approx \nu_{i-1}$.

The remaining difficulty lies in the presence of the large remainder $\mathcal F$.  However, if $\eta_i\cdot e_1\neq 0$, one checks that $\pi(M)$, and hence $\mathcal F$, is a symmetric matrix supported in the bottom-right $(n-1)\times(n-1)$ block. 
In \cite{CaoHirschInauen25}, the directions $\eta_i$ are therefore ordered so that $\eta_i\cdot e_1\neq 0$ for $i=1,\dots,n$, and such that 
\[
\{\eta_j\otimes\eta_j : j=n+1,\dots,n_*\}
\]
spans the space of symmetric matrices supported in the bottom-right $(n-1)\times(n-1)$ block.

With this ordering, the first $n$ perturbations can be corrected using suitable $w_c$, producing an error of size $\sum_{i=1}^n(\nu_{i-1}/\nu_i)^J$ and a remainder of size $\sum_{i=1}^n \nu_{i-1}/\nu_i$ lying in the span of $\{\eta_j\otimes\eta_j : j=n+1,\dots,n_*\}$. 
This remaining term can then be canceled exactly by adjusting the coefficients of the perturbations $j=n+1,\dots,n_*$.

Consequently, the first $n$ frequencies can be chosen essentially constant,
\[
\nu_n \approx \dots \approx \nu_1 \approx \nu_0
\]
(for large $J$), whereas for $i=n+1,\dots,n_*$ one must use the standard choice $\nu_i=K\nu_{i-1}$. 
This leads to
\[
\nu_n = K^{\,n_*-n+n/J}\nu_0,
\]
and therefore convergence in $C^{1,\theta}$ for all
\[
\theta < \frac{1}{1+2(n_*-n+n/J)}.
\]
Letting $J\to\infty$ yields the exponent in \cite{CaoHirschInauen25}.

\subsection{Novelties}\label{ss:intronovelties}

In \cite{CaoHirschInauen25}, we chose $w=0$ in the perturbations $j=n+1,\ldots,n_*$ for the following reason. 
When trying to apply the integration by parts process to a term $\gamma(\nu_i x\cdot \eta_i) M$ with $\eta_i\cdot e_1 = 0$, the decomposition \eqref{e:primsplit} still holds, but the large remainder $\pi(M)$ — which cannot be handled by integration by parts — no longer lies in the lower-dimensional span of primitive metrics yet to be added. 
Consequently, the full remainder cannot be canceled by adjusting the coefficients $a_j$ for $j\ge i+1$, and one must choose $\nu_i = K \nu_{i-1}$ so that the uncancelable part of the remainder is sufficiently small.

The crucial insight of this paper is that, in fact, three frequencies are present in the definition of $u_{q,i}$: the frequency $\nu_i$ of the perturbation added to $u_{q,i-1}$, the frequency $\nu_{i-1}$ at which $u_{q,i-1}$ (and hence its tangential and normal vector fields) oscillate, and the frequency $\nu_0$ at which the coefficient $a_i$ oscillates. 
To illustrate heuristically how this matters, consider the scalar setting of the very weak Monge-Ampère equation and assume
\[
u_{q,i} = u_{q,i-1} + \frac{a_i}{\nu_i}\gamma(\nu_i x\cdot \eta_i).
\]
A closer analysis of the error terms in $E_i$ reveals two types of leading-order contributions (see \eqref{e:importanterror}, \eqref{e:secondder}):
\[
\frac{a_i}{\nu_i}\gamma(\nu_i x\cdot \eta_i) D^2 u_{q,i-1}, \qquad
\frac{\gamma(\nu_i x\cdot\eta_i)}{\nu_i^2} \nabla a_i \otimes \nabla a_i.
\]
The second term is of order $\nu_0^2/\nu_i^2$ and is negligible as soon as $\nu_i \ge K^{1/2}\nu_0$. The first term is of order $\nu_{i-1}/\nu_i$, seemingly requiring $\nu_{i} = K\nu_{i-1}$. However, expanding $D^2 u_{q,i-1}$ gives
\[
D^2 u_{q,i-1} = D^2 u_{q,i-2} + \nu_{i-1} a_{i-1} \gamma''(\nu_{i-1} x\cdot \eta_{i-1}) \eta_{i-1}\otimes \eta_{i-1} + \gamma' \nabla a_{i-1} \otimes \eta_{i-1} + \frac{\gamma}{\nu_{i-1}} D^2 a_{i-1}.
\]
Disregarding the last two terms (which are of size $\leq C\nu_0$) and, for the moment, $D^2 u_{q,i-2}$, the leading-order error is (see \eqref{e:subst2_inductiveinduced})
\[
\frac{\nu_{i-1}}{\nu_i} \gamma(\nu_i x\cdot \eta_i) a_i a_{i-1} \gamma''(\nu_{i-1} x\cdot \eta_{i-1}) \eta_{i-1} \otimes \eta_{i-1} 
= \frac{\nu_{i-1}}{\nu_i} \gamma(\nu_i x\cdot \eta_i) A(\nu_{i-1} x\cdot \eta_{i-1}, x) \otimes \eta_{i-1},
\]
where we defined $A(t,x) = \gamma''(t) a_i(x) a_{i-1}(x) \eta_{i-1}$. 
Ideally, we would like a corrector $w_c$ such that
\[
\gamma(\nu_i x\cdot \eta_i) A(\nu_{i-1} x\cdot \eta_{i-1}, x) \otimes \eta_{i-1} = \sym(D w_c) + O((\nu_{i-1}/\nu_i)^J) + \mathcal F,
\]
with $\mathcal F$ in the span of $\eta_j\otimes \eta_j$, $j>i$. As explained above, this is not always possible. However, if $\eta_i$ and $\eta_{i-1}$ belong to the \emph{same subfamily}, the decomposition works up to an additional error of size $\nu_0/\nu_i$.

To explain the notion of a subfamily, let $e_i\in\R^n$ denote the standard basis vector, and define, for $1\le i\le j \le n$, the unit vectors
\begin{equation*}
\eta_{ij} = \frac{e_i + e_j}{|e_i + e_j|}.
\end{equation*}
For $i=1,\dots,n$, consider moreover the subspace
\begin{equation*}
\mathcal V_i = \mathrm{span}_\R \{\eta_{kl}\otimes \eta_{kl} : i\le k \le l \le n\},
\end{equation*}
of symmetric matrices supported in the bottom-right $(n-i+1)\times (n-i+1)$ block. In particular, $\mathcal V_1 = \Sym_n$, so that the decomposition \eqref{e:primintrodecomp} holds with $\{\eta_{ij}\}$ replacing $\{\eta_i\}$ (see Lemma \ref{l:basicdecomp}). 
If both $\eta_i$ and $\eta_{i-1}$ belong to the same subfamily $\{\eta_{kl}\}_{ l\geq k}$ for some fixed $k$, then in fact
\[
A(\nu_{i-1} x\cdot \eta_{i-1}, x) \otimes \eta_{i-1} 
= \sym(\alpha(A\otimes\eta_{i-1})\otimes \eta_i) + \pi,
\]
for a remainder $ \pi\in \mathcal V_{k+1}$ belonging to the span of primitive metrics yet to be added. Moreover, the derivative splits as
\[
D(A(\nu_{i-1}x\cdot \eta_{i-1}, x)) = \nu_{i-1}\partial_t A \otimes \eta_{i-1} + D_x A.
\]
Crucially, the leading term again has a decomposition whose remainder $\pi$ lies in the lower-dimensional space $\mathcal V_{k+1}$ (canceled later), while $D_x A$ is of size $\nu_0$ and hence small. Thus, iterating the integration by parts procedure produces a corrector $w_c$ such that
\[
\gamma(\nu_i x\cdot \eta_i) A(\nu_{i-1} x\cdot \eta_{i-1}, x) \otimes \eta_{i-1} = \sym(D w_c) + O((\nu_{i-1}/\nu_i)^J) + \mathcal F_l + \mathcal F_m,
\]
with $\mathcal F_l\in \mathcal V_{k+1}$ and $\|\mathcal F_m\|_0 \le C \nu_0 / \nu_i$, which is small enough for $\nu_i \ge K \nu_0$.

When crossing from one subfamily to the next, i.e., $\nu_{i-1}$ and $\nu_i$ do not belong to the same subfamily, $D^2 u_{i-1}$ contains terms that cannot be handled by integration by parts in direction $\eta_i$. Thus, the frequency must be increased by a factor $K$ at each family transition. 
Denoting by $\nu_{ij}$ the frequency of the perturbation corresponding to the primitive metric $\eta_{ij}\otimes\eta_{ij}$, for large $J$, the resulting sequence of frequencies is
\begin{align*}
& \nu_{1n} \approx \dots \approx \nu_{11} \approx \nu_0,\\
& \nu_{2n} \approx \dots \approx \nu_{22} = K\nu_{1n},\\
& \dots\\
& \nu_{n-1,n} \approx \nu_{n-1,n-1} = K \nu_{n-2,n},\\
& \nu_{nn} = K \nu_{n-1,n}.
\end{align*}
This yields a final frequency $\nu_n \approx K^{n-1} \nu_0$, explaining the exponent \eqref{e:ourexponent} by the argument in Section \ref{ss:introconvergence}.

\subsection{Structure of the paper} We conclude this introduction by fixing notation.  In Section \ref{s:prelim} we collect the necessary preliminaries: a lemma for the decomposition of the defect into primitive metrics, and the iterative integration by parts result, Proposition~\ref{p:IBP}, which relies on the algebraic matrix decomposition given in Lemma~\ref{l:algebraicdecomp}.In Section~\ref{s:step}, we introduce in Proposition~\ref{p:step} the form of our perturbation and analyze its effect on the induced metric. Combining this with the iterative integration by parts procedure, we prove in Section~\ref{s:stage} the main iterative proposition. Finally, in Section~\ref{s:pfofmain}, we iterate this proposition to construct the sequence $\{u_q\}$ and thereby complete the proof of Theorem~\ref{t:main}. 
For the reader’s convenience, the appendix contains the definition of H\"older spaces and their (semi-)norms, interpolation estimates, the Leibniz rule, and standard estimates for the mollification of H\"older functions.

\subsection{Notation}

For a matrix $M\in \R^{n\times n}$, we denote by $\sym(M)=\frac{1}{2}(M+M^T)$
its symmetric part, and we write $\Sym_n$ for the space of symmetric $n\times n$ matrices. For $a,b\in \R^n$, we set $
a\odot b = a\otimes b + b\otimes a = 2\,\sym(a\otimes b)$.
The symbol $C$ denotes a positive constant whose value may change from line to line. If $C$ depends on a parameter $p$, we write $C=C(p)$ or indicate the dependence explicitly.

For $1\le i\le j\le n$, we define
\begin{equation}
\label{d:etas}
\eta_{ij} = \frac{e_i+e_j}{|e_i+e_j|},
\end{equation}
where $e_i\in\R^n$ denotes the $i$-th standard basis vector. 

For $i=1,\dots,n$, we introduce the subspace
\begin{equation}
\label{d:V_i}
\mathcal V_i = \mathrm{span}_\R\{\eta_{kl}\otimes \eta_{kl} : i\le k\le l\le n\},
\end{equation}
which consists of all symmetric matrices supported in the bottom-right $(n-i+1)\times (n-i+1)$ block. We also set
\begin{equation}\label{d:V_n+1}
\mathcal V_{n+1} = \{0\},
\end{equation}
and define the positive definite matrix 
\begin{equation}\label{d:H_0}
    H_0 = \sum_{1\leq i\leq j\leq n}\eta_{ij}\otimes\eta_{ij}\,.
\end{equation}
\newpage
\section{Preliminaries: Decomposition and iterative integration by parts}\label{s:prelim}
In this section we collect auxiliary results needed for the iteration scheme. We begin with a basic algebraic decomposition of symmetric matrices in terms of the primitive metrics $\eta_{ij}\otimes\eta_{ij}$. We then recall a refined decomposition result (Lemma~\ref{l:babykaellen}), which allows one to absorb certain error terms into the primitive decomposition. For completeness, we include the elementary proof of the basic algebraic lemma, while referring to \cite{CaoHirschInauen25} for the refined argument.

Finally, in Subsection~\ref{ss:ibp}, we establish the iterative integration by parts proposition, which will serve as the analytic core of the construction.

\subsection{Decomposition}\label{ss:decomp}

\begin{lemma}\label{l:basicdecomp} Let $\{\eta_{ij}\}$ be defined as in \eqref{d:etas}. There exists $r_D= r_D(n)>0$ and linear maps $L_{ij}:\Sym_n \to \R$ such that for any symmetric matrix $M$ it holds 
\begin{equation}
    \label{e:basicdecomp} 
    M = \sum_{1\leq i\leq j\leq n } L_{ij}(M)\eta_{ij}\otimes \eta_{ij}\,, 
\end{equation}
and $L_{ij}(M) \geq r_D$ for all $M$ such that $|M-H_0 |<r_D$. 
\end{lemma}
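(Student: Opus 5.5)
The plan is to show that $\{\eta_{ij}\otimes\eta_{ij}\}_{1\le i\le j\le n}$ is a basis of $\Sym_n$. The maps $L_{ij}$ are then the dual coordinate functionals, and the positivity claim follows by continuity around $H_0$.

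\textbf{Basis property.} There are exactly $n_*=n(n+1)/2$ such matrices, matching $\dim\Sym_n$, so spanning suffices. For $i=j$ we have $\eta_{ii}=e_i$, hence $\eta_{ii}\otimes\eta_{ii}=e_i\otimes e_i$. For $i<j$ we have $|e_i+e_j|=\sqrt2$, so
\[
\eta_{ij}\otimes\eta_{ij}=\tfrac12\big(e_i\otimes e_i+e_j\otimes e_j+e_i\odot e_j\big).
\]
Therefore
\[
e_i\odot e_j = 2\,\eta_{ij}\otimes\eta_{ij}-\eta_{ii}\otimes\eta_{ii}-\eta_{jj}\otimes\eta_{jj}.
\]
The matrices $e_i\otimes e_i$ and $e_i\odot e_j$ ($i<j$) span $\Sym_n$, so the family spans and hence is a basis.

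\textbf{Explicit coefficients.} This also gives the $L_{ij}$ directly. Writing $M=\sum_i M_{ii}e_i\otimes e_i+\sum_{i<j}M_{ij}\,e_i\odot e_j$, we get
\[
L_{ij}(M)=2M_{ij}\ (i<j),\qquad L_{ii}(M)=M_{ii}-\sum_{j\neq i}M_{ij}.
\]
These are linear, and $|L_{ij}(M)|\le C(n)|M|$.

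\textbf{Positivity near $H_0$.} By definition \eqref{d:H_0}, $L_{ij}(H_0)=1$ for all $i\le j$, by uniqueness of coordinates in a basis. By linearity,
\[
L_{ij}(M)\ge 1-C(n)|M-H_0|.
\]
Choose $r_D\le \min\{1/2,\ 1/(2C(n))\}$. Then for $|M-H_0|<r_D$ we get $L_{ij}(M)\ge 1/2\ge r_D$.

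There is no real obstacle here. The only point needing care is to use a single constant $r_D$ in both roles, as the radius of the ball around $H_0$ and as the lower bound on the coefficients, and the choice above handles that.
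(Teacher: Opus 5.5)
Your proof is correct and follows the paper's argument: $\{\eta_{ij}\otimes\eta_{ij}\}$ is a basis of $\Sym_n$, $L_{ij}(H_0)=1$, and linearity of the $L_{ij}$ then yields $r_D$. The only difference is that you verify the basis property explicitly and compute the coefficients ($L_{ij}(M)=2M_{ij}$ for $i<j$, $L_{ii}(M)=M_{ii}-\sum_{j\neq i}M_{ij}$), both of which the paper leaves implicit.
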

\begin{proof}
    The existence of $L_{ij}$ follows from the fact that $\{\eta_{ij}\otimes \eta_{ij}\}$ is a basis for $\Sym_n$, the existence of $r_D$ from the linearity of the maps $L_{ij}$ and $L_{ij}(H_0) =1$.
\end{proof}
As mentioned in Subsection~\ref{ss:introibp}, one of the error terms arising in the iteration is not directly amenable to integration by parts, as it consists of a symmetric matrix multiplied by a highly oscillatory periodic function with non-zero mean. Following \cite{CaoHirschInauen25}, we overcome this difficulty by subtracting the average of the oscillatory factor. This produces an additional term of the form 
$ 2\pi/\nu^2\,\nabla a \otimes \nabla a,$
which no longer carries rapid oscillations. 

The resulting term can be treated by reabsorbing it into the primitive decomposition. Since gradients of the coefficients appear, this absorption is not purely algebraic. Nevertheless, because no highly oscillatory factor is present, a Picard-type iteration allows one to incorporate the term into the decomposition up to an arbitrarily small error. We refer to \cite[Lemma 2.2]{CaoHirschInauen25} for the detailed argument.

\begin{lemma}\label{l:babykaellen} Let $N_K\in \N$. There exists $r_K=r_K(n, N_K)>0$ such that the following holds for any $1\leq \lambda_0\leq \lambda_1\leq \cdots \leq \lambda_n$ and $H\in C^\infty (\bar \Omega,\Sym_n)$ satisfying
\begin{align}
    &\|H-H_0\|_0 + \frac{\lambda_0}{\lambda_1}< r_K\,,  \label{a:H}\\ 
    &[H]_k \leq \lambda_0^k \text{ for } k=1,\ldots, N_K+1.\label{a:Hestimates}
\end{align} 
For any $J=0,\ldots, N_K$, there exists a vector $a^J=(a^{J}_{11},\ldots, a^J_{nn}) \in C^\infty(\bar \Omega, \R^{n_*})$ and an error term $E^J\in C^\infty (\bar \Omega, \Sym)$ such that 
    \begin{equation}\label{e:Kaellendecomp}
        H= \sum_{1\leq i\leq j\leq n}^{n_*} (a^J_{ij})^2 \eta_{ij}\otimes \eta_{ij} + \sum_{l=1}^n \frac{2\pi}{\lambda_l^2}\nabla a^J_{1l}\otimes \nabla a^J_{1l} + E^J\,,
    \end{equation}
    and the estimates 
     \begin{align}
         &a^J_{ij}\geq r_K, \text{ for }1\leq i\leq j\leq n\,\label{e:ajlower}\\
         &\|a^J\|_k \leq C \lambda_0^k \text{ for }k=0,\ldots, N_K-J+1\,,\label{e:ajestimate}\\
         & \|E^J\|_k \leq C \left(\frac{\lambda_0}{\lambda_1}\right)^{2(J+1)}\lambda_0^k \text{ for } k=0,\ldots,N_K-J \,,\label{e:ejestimate}
     \end{align}
     hold. The constant $C$ only depends on $n, N_K$. 
\end{lemma}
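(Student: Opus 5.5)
We argue by induction on $J$, using a Picard-type iteration built on the algebraic decomposition of Lemma~\ref{l:basicdecomp}. The idea is that each step feeds the previous step's gradient correction back into the decomposition. Every such feedback gains a factor $(\lambda_0/\lambda_1)^2$, because the correction carries a prefactor $\lambda_l^{-2}\le\lambda_1^{-2}$ while two derivatives of coefficients oscillating at frequency $\lambda_0$ cost only $\lambda_0^2$.

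\emph{Base case $J=0$.} Set $a^0_{ij}=\sqrt{L_{ij}(H)}$. Since $\|H-H_0\|_0<r_K\le r_D$, Lemma~\ref{l:basicdecomp} gives $L_{ij}(H)\ge r_D$, so the square root is smooth with a uniform lower bound. By \eqref{a:Hestimates} and the chain rule (Fa\`a di Bruno together with the Leibniz and interpolation estimates of the appendix), we get $\|a^0\|_k\le C\lambda_0^k$ for $k\le N_K+1$. We then define
\[
E^0=-\sum_{l=1}^n\frac{2\pi}{\lambda_l^2}\nabla a^0_{1l}\otimes\nabla a^0_{1l},
\]
which satisfies $\|E^0\|_k\le C\lambda_1^{-2}\lambda_0^{k+2}=C(\lambda_0/\lambda_1)^2\lambda_0^k$ for $k\le N_K$, as required by \eqref{e:ejestimate}.

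\emph{Inductive step.} Given $a^{J-1}$, define
\[
a^J_{ij}=\sqrt{L_{ij}\Big(H-\sum_{l=1}^n\frac{2\pi}{\lambda_l^2}\nabla a^{J-1}_{1l}\otimes\nabla a^{J-1}_{1l}\Big)}.
\]
The matrix inside the square root differs from $H$ by at most $C(\lambda_0/\lambda_1)^2$ in $C^0$. Choosing $r_K$ small therefore keeps it within $r_D$ of $H_0$, so we retain the lower bound \eqref{e:ajlower}, with $r_K\le r_D$ taken small enough to absorb the constants. Its $C^k$ norms are bounded by $C\lambda_0^k$ whenever $a^{J-1}\in C^{k+1}$ is controlled. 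This costs exactly one derivative per step, which explains the range $k\le N_K-J+1$ in \eqref{e:ajestimate}.

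The resulting error is
\[
E^J=\sum_{l=1}^n\frac{2\pi}{\lambda_l^2}\big(\nabla a^{J-1}_{1l}\otimes\nabla a^{J-1}_{1l}-\nabla a^J_{1l}\otimes\nabla a^J_{1l}\big).
\]
To bound it, we first estimate the difference of consecutive coefficients. The expression under the square root defining $a^J$ minus the one defining $a^{J-1}$ equals a difference of gradient terms at levels $J-1$ and $J-2$, which is precisely $-E^{J-1}$ up to sign. Since $L_{ij}$ is linear and the square root is smooth away from zero, this yields
\[
\|a^J-a^{J-1}\|_k\le C\|E^{J-1}\|_k\le C(\lambda_0/\lambda_1)^{2J}\lambda_0^k.
\]
Then
\[
\|E^J\|_k\le C\lambda_1^{-2}\big(\|a^J-a^{J-1}\|_{k+1}\lambda_0+\cdots\big)\le C(\lambda_0/\lambda_1)^{2J+2}\lambda_0^k,
\]
which is \eqref{e:ejestimate}.

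\emph{Main obstacle.} The key difficulty is the derivative bookkeeping. Each step loses one derivative, so the indices must be tracked carefully to close the estimates with constants depending only on $n$ and $N_K$. The crucial structural point is that the new error is a \emph{difference} of gradient terms rather than a fresh term of size $(\lambda_0/\lambda_1)^2$; this is what makes the gain geometric. The composition estimates for the square root rely on the standard Hölder composition and Leibniz bounds from the appendix.
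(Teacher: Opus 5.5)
Your proposal is correct and is the Picard-type iteration that the paper invokes while deferring the details to \cite{CaoHirschInauen25}: you set $a^J_{ij}=\sqrt{L_{ij}(H-\sum_l 2\pi\lambda_l^{-2}\nabla a^{J-1}_{1l}\otimes\nabla a^{J-1}_{1l})}$, take $E^J$ to be the difference of consecutive gradient terms, lose one derivative per step, and fix $r_K$ after the $r_K$-independent constants so that the square-root argument stays within $r_D$ of $H_0$. The one imprecision is that the difference bound should read $\|a^J-a^{J-1}\|_k\le C(\lambda_0^k\|E^{J-1}\|_0+\|E^{J-1}\|_k)$ rather than $C\|E^{J-1}\|_k$; this still gives $C(\lambda_0/\lambda_1)^{2J}\lambda_0^k$, so your bound on $E^J$ goes through unchanged.
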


\subsection{Integration by parts}\label{ss:ibp}
In the integration by parts argument, the following algebraic decomposition of a matrix is crucial.
\begin{lemma}\label{l:algebraicdecomp}
Let $i\in\{1,\ldots,n\}$ and let $\xi\in\mathbb S^{n-1}$ satisfy
\[
\xi_k=0 \quad \text{for all } k<i,
\qquad 
\xi_i\neq 0,
\]
(where the first condition is void if $i=1$).
Then there exist linear maps
\begin{align*}
\alpha &: \Sym_n \to \mathbb R^n,\qquad \alpha(M)_k=0 \text{ for } k<i,\\
\pi_m  &: \Sym_n \to \{M\in\Sym_n : M_{kl}=0 \text{ for all } k,l\ge i\},\\
\pi_l &: \Sym_n \to \mathcal V_{i+1},
\end{align*}
such that for every $M\in \Sym_n$ one has the unique decomposition
\begin{equation}\label{e:algebraicdecomposition}
M = \alpha(M)\odot \xi + \pi_m(M) + \pi_l(M).
\end{equation}
\end{lemma}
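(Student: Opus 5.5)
Since all three maps are required to be linear and the decomposition unique, it suffices to show that $\Sym_n$ is the direct sum of three subspaces:
\[
W_\alpha=\{a\odot\xi : a\in\R^n,\ a_k=0 \text{ for } k<i\},\quad
W_m=\{M : M_{kl}=0 \text{ for all } k,l\ge i\},\quad
W_l=\mathcal V_{i+1}.
\]
Then $\alpha$, $\pi_m$, $\pi_l$ are the coordinate projections. For $\alpha$ we additionally need the map $a\mapsto a\odot\xi$ to be injective on $\{a_k=0,\ k<i\}$.

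The plan is a dimension count plus a spanning argument.

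\emph{Dimensions.} We have $\dim W_m = n_* - (n-i+1)(n-i+2)/2$, which counts the entries with at least one index $<i$. Next, $\dim W_l = (n-i)(n-i+1)/2$, since $\mathcal V_{i+1}$ is the set of symmetric matrices supported in the bottom-right $(n-i)\times(n-i)$ block. Finally $\dim W_\alpha \le n-i+1$. These add to at most
\[
n_* - \tfrac{(n-i+1)(n-i+2)}{2} + \tfrac{(n-i)(n-i+1)}{2} + (n-i+1) = n_*.
\]
So it is enough to prove that $W_\alpha+W_m+W_l=\Sym_n$. Equality of dimensions then forces the sum to be direct and $a\mapsto a\odot\xi$ to be injective on $\{a_k=0,\ k<i\}$. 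This yields uniqueness and linearity of $\alpha,\pi_m,\pi_l$, as well as the stated support condition $\alpha(M)_k=0$ for $k<i$.

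\emph{Spanning.} Given $M$, first put all entries with some index $<i$ into $\pi_m(M)$. The remaining task is to decompose the bottom-right block $B=(M_{kl})_{k,l\ge i}$. Let $\xi' = (\xi_i,\dots,\xi_n)$, so $\xi'_i=\xi_i\neq0$. For $a$ supported in indices $\ge i$, the bottom-right block of $a\odot\xi$ is $a'\odot\xi'$, while its other entries lie in $W_m$ and are simply absorbed there. Hence we must solve
\[
B = a'\odot\xi' + (\text{matrix supported in indices} \ge i+1).
\]
Here the row and column of index $i$ carry $n-i+1$ independent entries $B_{ii}, B_{ik}$ ($k>i$). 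We solve them triangularly:
\begin{enumerate}[(1)]
\item From $2a_i\xi_i = B_{ii}$ we get $a_i = B_{ii}/(2\xi_i)$.
\item For $k>i$, from $a_i\xi_k + a_k\xi_i = B_{ik}$ we get $a_k = (B_{ik}-a_i\xi_k)/\xi_i$.
\end{enumerate}
This uses only $\xi_i\neq 0$. The leftover block on indices $\ge i+1$ lies in $\mathcal V_{i+1}$, which by \eqref{d:V_i} equals all symmetric matrices supported there. To justify that last identification, note that $\{\eta_{kl}\otimes\eta_{kl}\}_{i+1\le k\le l}$ spans the diagonal via $e_k\otimes e_k$ and the off-diagonal entries via $2\eta_{kl}\otimes\eta_{kl}-e_k\otimes e_k-e_l\otimes e_l = e_k\odot e_l$.

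The only mildly delicate point is bookkeeping: $a\odot\xi$ also produces entries in rows $<i$ when $\xi_k\neq 0$ for $k<i$. Since $\xi_k=0$ for $k<i$ and $a_k=0$ for $k<i$, such entries do not in fact occur. In any case they would belong to $W_m$, so the argument is robust. Thus the main obstacle is just verifying the dimension identity, which is the short computation above.
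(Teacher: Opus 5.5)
Your proof is correct and follows essentially the paper's route: both rest on the same dimension count $(n-i+1)+\bigl(n_*-(n-i+1)_*\bigr)+(n-i)_*=n_*$ combined with one half of rank--nullity. The only difference is that you prove surjectivity by solving the $i$-th row and column triangularly, which also gives explicit formulas for $\alpha(M)$. The paper instead proves injectivity of the corresponding map $\Phi$, reading off the same entries $(i,i)$ and $(i,l)$ and using $\xi_i\neq 0$.
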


\begin{proof}
    Observe that the sets $\{M\in\Sym_n : M_{kl}=0 \text{ for all } k,l\ge i\}$ and $\mathcal V_{i+1}$ are vector subspaces of $\Sym_n$ of dimensions $ n_1 = n_* - (n-i+1)_*$ and $n_2 = (n-i)_*$ respectively. Consider then the linear map $\Phi: \R^{n-i+1}\times \R^{n_1}\times \R^{n_2} \to \Sym_n$ defined as
     \[\Phi(\tilde \alpha, \beta,\gamma) = \begin{pmatrix}
         0\\
         \tilde \alpha
     \end{pmatrix}\odot \xi + \sum_{k<i, l\geq k} \beta_{kl}e_k\odot e_l +\sum_{i+1\leq k\leq l\leq n}\gamma_{kl}\eta_{kl}\otimes\eta_{kl}. \]
     It holds
     \[n-i+1 +n_1+n_2 =n -i+1 + n_* - (n-i+1)_*+(n-i)_* = n_*\,,\]
     since 
     \[ (n-i+1)_* = n-i+1 +(n-i)_*.\]
     Thus $\Phi$ is a linear map between vector spaces of the same dimension. In order to show the claim, it therefore suffices to show that $\Phi$ is injective. 
     Let therefore $\Phi(\tilde \alpha, \beta,\gamma) =0$. Observe that the first and the third matrix is supported in indices $i\leq k\leq l\leq n$ and $i+1\leq k\leq l \leq n $ respectively. Hence, for all indices $k<i, l\geq k$,
     \[ 0 = e_k^T \Phi(\tilde \alpha, \beta,\gamma)e_l = b_{kl}\,. \]
     On the other hand, 
     \[0=e_i^T\Phi(\tilde \alpha,\beta,\gamma)e_i =  2\tilde \alpha_1\xi_i \]
     from which $\tilde \alpha_1=0$ follows in view of $\xi_i\neq 0$. Now looking at entry $il$ for $l>i$ yields $\tilde\alpha= 0$ in view of $\xi_i\neq 0$ and $\tilde \alpha_1 =0$:
     \[ \tilde \alpha_{l-i+1}\xi_i =\tilde \alpha_{l-i+1}\xi_i + \tilde \alpha_1\xi_l = e_i^T\begin{pmatrix}
         0\\
         \tilde \alpha
     \end{pmatrix}\odot \xi e_l  = 0\,.\]
Thus 
\[ 0 =\Phi(\tilde\alpha,\beta,\gamma) =\sum_{i+1\leq k\leq l\leq n}\gamma_{kl}\eta_{kl}\otimes\eta_{kl}, \]
which implies $\gamma= 0$, since $\{\eta_{kl}\otimes\eta_{kl}\}$ is a linearly independent family. \end{proof}
We now formulate the iterative integration by parts proposition. Its assumptions should be compared with the heuristic discussion in Subsection~\ref{ss:intronovelties}. There, the frequencies $\lambda,\mu,\nu$ correspond to $\nu_0,\nu_{i-1},\nu_i$, respectively, while the unit vectors $\xi,\eta$ correspond to $\eta_i,\eta_{i-1}$. The vector field $A(t,x)$ plays the role of $\gamma''(t)a_i(x)a_{i-1}(x)\eta_{i-1}$.

\begin{proposition}\label{p:IBP}
    Let $i\in \{1,\ldots,n\}$, let $\xi, \eta\in \S^{n-1}$ satisfy 
    \[\xi_k= \eta_k =0 \text{ for all }k<i,\qquad  \xi_i\neq 0\,,\]
   and consider a vector-field $A\in C^\infty(\S^1\times\bar\Omega,\R^n) $ with
    \begin{equation}
          A_k= 0 \text{ for all } k<i.\label{a:A1}
    \end{equation}
Let $\gamma\in C^\infty(\S^1)$ be  periodic with zero mean.  

Fix frequencies $\nu \geq \mu\geq\lambda\geq 1$. Then for any natural $J\in \N$ there exist vectorfields \[ w^J\in C^\infty(\bar \Omega,\R^n), \qquad A^J\in C^\infty(\S^1\times\bar \Omega,\R^n),\] a periodic $\gamma^J\in C^\infty(\S^1)$ with mean zero, and  remainders
\[ \mathcal F^J_{m},\mathcal F^J_{l}\in C^\infty(\bar \Omega,\Sym_n)\] such that 
    \begin{align}\label{e:IBPdecomp}
        \gamma(\nu x\cdot \xi) A(\mu x\cdot \eta, x) \odot \eta = 2 \,\sym( Dw^J) + \Big(\tfrac{\mu}{\nu}\Big)^J \gamma^{J}(\nu x\cdot \xi) A^J(\mu x\cdot \eta,x) \odot \eta + \mathcal F^J_{m} +\mathcal F^J_{l}, 
    \end{align}
   and \begin{enumerate}[(i)]
        \item $A^J $ satisfies \eqref{a:A1},
        \item $ \mathcal F^J_{m}$ is the mixed-block part of the remainder: 
        \begin{equation}\label{e:mb}
            \mathcal F^J_{m}\in \{M\in \Sym_n : M_{kl} = 0 \text{ for all } k,l\geq i\},        \end{equation}
            \item   $\mathcal F^J_{l}$ is the lower-block part of the remainder: 
            \begin{equation}\label{e:lb}
             \mathcal F^J_{l}   \in \mathcal V_{i+1}.
            \end{equation}
    \end{enumerate} 
    Moreover, if $N_I\in \N$ and   $A$ satisfies 
    \begin{equation}
        \|\partial_t^lD_x^m A\|_0 \leq C_A \lambda^m \text{ for all } l+m\leq N_I, \label{a:A2}
    \end{equation}
    for some constant $C_A\geq 1$,
  then for any $J\leq N_I$ the following estimates hold 
     \begin{align}
     &\|\partial_t^lD_x^m A^J\|_0 \leq  C\lambda^m,\qquad \text{ for all } 0\leq l+m\leq N_I-J, \label{a:A2j}\\
         &\|\mathcal F^J_{m} \|_0\leq C\frac{\lambda}{\nu }\,,\label{e:F_mestimate}\\
        & \left[\mathcal F^J_{l} \right]_k \leq C\nu^k\,,\qquad \text{ for all }k=0,\ldots,N_I-J\label{e:F_lestimates}\\
        & \left[ w^J\right]_k \leq C\nu^{k-1}\,,\quad \text{ for all }k=0,\ldots,N_I-J+1,\label{e:westimates}
     \end{align}
     where the constant $C $ only depends on $n,N_I, \gamma, \xi_i$ and $C_A$.
\end{proposition}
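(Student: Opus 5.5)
Let me set up the induction on $J$. For $J=0$ the decomposition is trivial: set $w^0=0$, $\gamma^0=\gamma$, $A^0=A$, and $\mathcal F^0_m=\mathcal F^0_l=0$. The step $J\to J+1$ is a single integration by parts applied to the term
\[
\big(\tfrac{\mu}{\nu}\big)^J\gamma^J(\nu x\cdot\xi)\,A^J(\mu x\cdot\eta,x)\odot\eta .
\]

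\textbf{Algebraic splitting.} Apply Lemma~\ref{l:algebraicdecomp} with the given $\xi$ pointwise in $(t,x)$ to the symmetric matrix $M(t,x)=A^J(t,x)\odot\eta$. This gives
\[
A^J\odot\eta=\alpha(A^J\odot\eta)\odot\xi+\pi_m(A^J\odot\eta)+\pi_l(A^J\odot\eta).
\]
By linearity, $B(t,x):=\alpha(A^J(t,x)\odot\eta)$ inherits the bounds \eqref{a:A2j}, with a constant depending on $\xi_i$ through the inverse of $\Phi$. It also satisfies $B_k=0$ for $k<i$. The key observation is the following: since $A^J_k=\eta_k=0$ for $k<i$, the matrix $A^J\odot\eta$ is supported in the block $k,l\ge i$, and hence its mixed-block part vanishes.

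\textbf{Integration by parts in the fast variable.} Let $\tilde\gamma$ be the mean-zero primitive of $\gamma^J$ and set $\phi=\tilde\gamma(\nu x\cdot\xi)\,B(\mu x\cdot\eta,x)/\nu$. Then
\[
2\,\sym D\phi=\gamma^J(\nu x\cdot\xi)\,B\odot\xi+\tfrac{\mu}{\nu}\tilde\gamma\,(\partial_tB)\odot\eta+\tfrac1\nu\tilde\gamma\,2\,\sym(D_xB).
\]
Solving for $\gamma^J B\odot\xi$, we absorb $(\mu/\nu)^J\phi$ into $w^{J+1}$ and the $\pi_l$-part into $\mathcal F_l$. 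The term $\tfrac{\mu}{\nu}\tilde\gamma\,\partial_tB\odot\eta$ has exactly the required structure, with $\gamma^{J+1}=-\tilde\gamma$ and $A^{J+1}=\partial_t B$; the latter still vanishes in the components $k<i$ and loses one derivative in the bounds. That leaves two remaining pieces:
\begin{itemize}
\item the $\pi_m$-part, which is zero by the observation above (or at worst is placed in $\mathcal F_m$);
\item the slow-derivative term $-(\mu/\nu)^J\nu^{-1}\tilde\gamma\,2\,\sym(D_xB)$, which has size $C\lambda/\nu$ by \eqref{a:A2}.
\end{itemize}

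\textbf{The main obstacle: the slow-derivative term.} It is neither small enough to iterate further nor of mixed-block type. One option is to split it via Lemma~\ref{l:algebraicdecomp} again: its $\pi_l$-part goes into $\mathcal F_l$, its mixed-block part into $\mathcal F_m$ (consistent with \eqref{e:F_mestimate}), and its $\alpha\odot\xi$-part is integrated by parts once more. This generates a finite tree of further terms, all carrying at least one factor $\lambda/\nu$.

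The simplest consistent choice is to note that, because $B_k=0$ for $k<i$, the matrix $\sym D_xB$ has nonzero entries only in rows or columns $\ge i$. Its part in the block $k,l\ge i$ can again be split by Lemma~\ref{l:algebraicdecomp}, and its genuinely mixed entries go to $\mathcal F_m$. The $\alpha$-component is then handled by repeated integration by parts, each step gaining another factor $\mu/\nu$ or $\lambda/\nu$. After finitely many steps (at most $N_I$) the leftover can be bounded by $C\lambda/\nu$ and assigned to $\mathcal F_m$. The precise bookkeeping of which block it lands in is what I expect to be the delicate point: $\mathcal F_m$ as defined vanishes on the block $k,l\ge i$, so any small term that lives in that block has to be pushed through further integrations by parts until its $\pi_l$ parts go to $\mathcal F_l$.

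\textbf{Estimates.} The bounds follow from the Leibniz rule together with $\lambda\le\mu\le\nu$:
\begin{itemize}
\item $[w^J]_k\le C\nu^{k-1}$, since each $x$-derivative of $\phi$ costs at most a factor $\nu$;
\item $[\mathcal F_l]_k\le C\nu^k$, since $\pi_l$ is linear and its arguments have size $O(1)$ with derivatives costing at most $\nu$;
\item \eqref{a:A2j} holds because each step applies one $\partial_t$ or one $D_x$ to $B$, consuming one of the available derivatives, which explains the loss of one order per integration by parts in the range $l+m\le N_I-J$.
\end{itemize}
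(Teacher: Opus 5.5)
Your opening steps agree with the paper's. By uniqueness in Lemma~\ref{l:algebraicdecomp}, splitting $A^J\odot\eta$ relative to $\xi$ with vanishing mixed part is exactly the paper's $\gamma\tilde A\odot\xi+\gamma A^\perp\odot\eta^\perp$, and your integration-by-parts identity is correct.

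\textbf{The gap.} It is the slow-derivative term $-\tfrac{2}{\nu}\tilde\gamma\,\sym(D_xB)$, and it stems from your claim that this term is not small enough to iterate. It is. Since $\lambda\le\mu$, write $\tfrac1\nu=\tfrac{\mu}{\nu}\cdot\tfrac1\mu$. Then
\[
\|\partial_t^lD_x^m(\mu^{-1}D_xB)\|_0\le C\lambda^{m+1}/\mu\le C\lambda^m,
\]
which are the same bounds as for $\partial_tB$. The term also carries the same profile $\tilde\gamma(\nu x\cdot\xi)$ and the same gain $\mu/\nu$ as the $\partial_t$-term.

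\textbf{How the paper closes it.} It decomposes $\sym(D_xB)=\alpha'\odot\eta+\pi_m'+\pi_l'$ by Lemma~\ref{l:algebraicdecomp}, with $\eta$ in the role of $\xi$. This requires $\eta_i\neq0$; the paper uses this tacitly, and it holds for $\eta=\eta_{1j},\eta_{ik}$ in every application. The $\alpha'$-part is then folded into the next amplitude:
\[
\gamma^{J+1}=\tilde\gamma,\qquad A^{J+1}=-\partial_tB-\tfrac{2}{\mu}\,\alpha'.
\]
This $A^{J+1}$ still satisfies \eqref{a:A1}, and each step loses one derivative, which gives \eqref{a:A2j}. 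Only $-(\mu/\nu)^J\tfrac{2}{\nu}\tilde\gamma\,\pi_m'$ goes into $\mathcal F_m$; it has size $C\lambda/\nu$ per step, which yields \eqref{e:F_mestimate}. The $\pi_l'$-part goes into $\mathcal F_l$. No tree is needed.

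\textbf{Why your version fails.} Without this step, your argument does not produce a decomposition of the required form.
\begin{enumerate}[(a)]
\item Splitting the slow term relative to $\xi$ and integrating by parts again never removes its $\alpha\odot\xi$-component, since each integration by parts regenerates a smaller term of the same kind. So the process cannot end with everything in $\mathcal F_l$.
\item Your fallback of assigning the block-supported leftover to $\mathcal F_m$ violates \eqref{e:mb}, as you suspected.
\item Treating the branches separately gives a sum of many terms, not the single product $(\mu/\nu)^J\gamma^JA^J\odot\eta$ demanded by \eqref{e:IBPdecomp}.
\end{enumerate}

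\textbf{Rescuing the tree.} It can be rescued by noticing that all branches at depth $d$ carry the same profile, namely the $d$-fold mean-zero primitive. After writing $\tfrac1\nu=\tfrac\mu\nu\cdot\tfrac1\mu$, they also carry the same factor $(\mu/\nu)^d$, so they merge into one amplitude. That is precisely the observation above. To land in the $\odot\eta$ form you still need the splitting relative to $\eta$.
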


\begin{proof} We prove the proposition by induction on $J$. For $J= 1 $ we start by decomposing
    \begin{equation*}
        \eta= \frac{\eta_i}{\xi_i}\xi + \eta^\bot\,,\qquad
        A = \frac{A_i}{\xi_i}\xi + A^\bot,
    \end{equation*}
    so that $\eta^\bot$ and $A^\bot$ satisfy 
    \begin{equation}\label{e:later}
    \eta^\bot_k = A^\bot_k = 0 \text{ for all } k\leq i.
    \end{equation}
  We can therefore write, omitting the arguments $\nu x\cdot \xi $ and $(\mu x\cdot \eta, x)$ of $\gamma $ and $A$, 
  \begin{align*} \gamma A \odot \eta &= \gamma \frac{\eta_i}{\xi_i} A \odot \xi + \gamma A \odot \eta^\bot\\
  & = \gamma \frac{\eta_i}{\xi_i} A \odot \xi + \gamma\frac{A_i}{\xi_i}\xi\odot  \eta^\bot + \gamma A^\bot \odot \eta^\bot\\ 
  &= \gamma\left(\frac{\eta_i}{\xi_i} A +\frac{A_i}{\xi_i}\eta^\bot\right)\odot \xi +\gamma A^\bot \odot \eta^\bot\,.
   \end{align*}
   Observe that by \eqref{e:later} it holds $\gamma A^\bot \odot \eta^\bot\in \mathcal V_{i+1}$, so that this term will contribute to $\mathcal F_l^1$. For the first term, we abbreviate \begin{equation}\label{e:tilde}
       \tilde A(t,x) =\frac{\eta_i}{\xi_i} A(t,x) +\frac{A_i(t,x)}{\xi_i}\eta^\bot.
   \end{equation} Note that by assumption \eqref{a:A1}, it holds $\tilde A_k = 0 $ for all $k<i$. We can now write
   \begin{align*}
       \gamma(\nu x\cdot \xi ) \tilde A(\mu x\cdot \eta ,x ) \odot \xi  &= 2\,\sym\left( D\left( \frac{\gamma^1(\nu x\cdot \xi) }{\nu } \tilde A(\mu x\cdot \eta ,x)\right)\right) \\
       &\qquad- 2\frac{\gamma^1(\nu x\cdot \xi) }{\nu } \sym\left( \mu \partial_t \tilde A \otimes \eta + D_x \tilde A\right)\,, 
   \end{align*}
   where $\gamma^1\in C^\infty(\S^1) $ is the primitive of $\gamma$ with zero mean
   \[\gamma^1(t) = \int_0^t\gamma(s)\,ds - \frac{1}{2\pi}\int_0^{2\pi} \left( \int_0^t\gamma(s)\,ds\right)dt\,\]
    and $\partial_t \tilde A$ and $D_x\tilde A $ are evaluated at $(\mu x\cdot \eta,x)$. By Lemma \ref{l:algebraicdecomp} we can now write 
    \[\sym(D_x \tilde A) = \alpha\left(\sym(D_x \tilde A)\right)\odot \eta + \pi_m(\sym(D_x \tilde A))+\pi_l(\sym(D_x \tilde A)),\]
    so that when combining with the above, we find \eqref{e:IBPdecomp} for $J=1$ when defining 
    \begin{align}& w^1(x) = \frac{\gamma^1(\nu x\cdot \xi )}{\nu}\tilde A(\mu x\cdot \eta,x),\quad A^1(t,x) =- \partial_t \tilde A(t,x) -\frac{2}{\mu}\alpha\left(\sym(D_x\tilde A(t,x))\right),\nonumber \\
    & \mathcal F^1_m =-2\frac{\gamma^1(\nu x\cdot\xi)}{\nu}\pi_m\left(\sym(D_x\tilde A)\right),\quad \mathcal F^1_l = \gamma A^\bot \odot \eta^\bot -2\frac{\gamma^1(\nu x\cdot\xi)}{\nu}\pi_l\left(\sym(D_x\tilde A)\right)\,,\label{e:IBPfirstchoice}\end{align}
where $A^\bot$ and $D_x\tilde A$ are evaluated in $(\mu x\cdot \eta, \eta)$.
    By definition of $\tilde A$ and the property of the linear map $\alpha$, $A^1 $ satisfies \eqref{a:A1}. Similarly, \eqref{e:mb} and \eqref{e:lb} are satisfied for $J=1$. 

Now let $w^J, A^J, \gamma^J, \mathcal F_m^J,\mathcal F_l^J$ satisfy \eqref{a:A1}--\eqref{e:lb} for some $J\in \N$.
Let $\gamma^{J+1}$ be the primitive of $\gamma^J$ with zero mean and define $w, A^{J+1}, F_m, F_l$ by \eqref{e:IBPfirstchoice} with $\gamma^1$ and $A$ replaced by $\gamma^{J+1}$ and $A^J$ respectively (recall the $\tilde\,$-abbreviation in \eqref{e:tilde}).  We then have 
\[    \gamma^J(\nu x\cdot \xi) A^J(\mu x\cdot \eta, x) \odot \eta = 2 \,\sym( Dw) + \frac{\mu}{\nu} \gamma^{J+1}(\nu x\cdot \xi) A^{J+1}(\mu x\cdot \eta,x) \odot \eta + F_m+F_l,\]
so that \eqref{a:A1}--\eqref{e:lb} follow for $J+1$ upon choosing
 \[w^{J+1} = w^J+ \left( \tfrac{\mu}{\nu}\right)^Jw\,,\quad \mathcal F_m^{J+1}= \mathcal F_m^J+\left( \tfrac{\mu}{\nu}\right)^JF_m\,,\quad \mathcal F_l^{J+1}= \mathcal F_l^J+\left( \tfrac{\mu}{\nu}\right)^JF_l\,.  \]

Assume now that $A$ satisfies the bounds \eqref{a:A2}. From the iterative definition of $A^J$ we have for any $1\leq j\leq J$ 
\[A^j(t,x)=-\partial_t\tilde A^{j-1}(t,x) - \frac{2}{\mu}\alpha\left(\sym\left(D_x \tilde A^{j-1}(t,x)\right)\right)\,,\]
with $A^0=A$, so that 
\begin{align*}
    \|\partial_t^l D_x^m A^j\|_0&\leq C\left(\|\partial_t^{l+1}D_x^m \tilde A^{j-1}\|_0 +\mu^{-1}\|\partial_t^lD_x^{m+1}\tilde A^{j-1}\|_0\right)\\
    &\leq C\left(\|\partial_t^{l+1}D_x^m  A^{j-1}\|_0 +\mu^{-1}\|\partial_t^lD_x^{m+1} A^{j-1}\|_0\right)\\
    &\leq C\lambda^m
\end{align*}   for all $l+m\leq N_I-j$, assuming $A^{j-1}$ satisfies \eqref{a:A2j}. Since $A^0 = A $ satisfies \eqref{a:A2}, \eqref{a:A2j} follows. 

    For the estimates \eqref{e:F_mestimate} and \eqref{e:F_lestimates}, recall that for any $1\leq j\leq J$
    \begin{align*}
        &\mathcal F^j_m(x) = \mathcal F^{j-1}_m(x) -2 \left( \frac{\mu}{\nu}\right)^{j-1}\frac{\gamma^j(\nu x\cdot \xi)}{\nu}\pi_m\left(\sym\left(D_x \tilde A^{j-1}\right)\right)\,,\\
       & \mathcal F^j_l(x) = \mathcal F_l^{j-1}(x) + \left(\frac{\mu}{\nu}\right)^{j-1}\left( \gamma^{j-1}(\nu x\cdot \xi) (A^{j-1})^\bot \odot \eta^\bot -2\frac{\gamma^j(\nu x\cdot \xi)}{\nu}\pi_l\left(\sym\left(D_x\tilde A^{j-1}\right)\right)\right)\,.
    \end{align*}   
with $\mathcal F^0_m=\mathcal F^0_l=0$, and where $(A^{j-1})^\bot $ and $D_x \tilde A^{j-1}$ are evaluated at $(\mu x\cdot\eta,x)$.
 Thus, estimate \eqref{e:F_mestimate} follows from \eqref{a:A2j} and the the linearity of $\pi_m .$ 
For the estimates \eqref{e:F_lestimates}, 
we use the Leibnizrule \eqref{e:Leibniz} to find 
\begin{align}\label{e:F_lest}
    \| D^k\mathcal F_l^j\|_0&\leq \| D^k\mathcal F_l^{j-1}\|_0+C\left( \nu^k \|A^{j-1}\|_0  + \|D^k( A^{j-1}(\mu x\cdot \eta,x))\|_0 \right) \nonumber\\ &\qquad+ C\nu^{-1} \left( \nu^k \|D_x A^{j-1}\|_0 + \|D^k\left( D_x A^{j-1} (\mu x\cdot \eta, x)\right)\|_0\right)\,.
\end{align} 
By  the chain rule \eqref{l:composition} it holds for all $k\leq N_I-(j-1)$
\[ \|D^k\left(A^{j-1}(\mu x\cdot \eta, x)\right)\|_0 \leq C \sum_{l=0}^k \mu^l\|\partial_t^lD_x^{k-l} A^{j-1}(t,x)\|_0\leq C \sum_{l=0}^k \mu^l \lambda^{k-l}\leq C\mu^k, \]
using $\lambda\leq \mu$, and analogously for all $k\leq N_I-j$
\[ \|D^k\left(D_x A^{j-1}(\mu x\cdot \eta, x)\right)\|_0 \leq C \sum_{l=0}^k \mu^l\|\partial_t^lD_x^{k-l+1} A^{j-1}(t,x)\|_0\leq C\sum_{l=0}^k \mu^l \lambda^{k-l+1}\leq C\lambda \mu^k. \]
Plugging into \eqref{e:F_lest} yields \eqref{e:F_lestimates} in view of $\mu\leq \nu$ and $\mathcal F_l^0 = 0$.

The same estimate also yields \eqref{e:westimates}: since for all $1\leq j\leq J$
\[  w^j(x) =w^{j-1}(x) + \left(\frac{\mu}{\nu}\right)^{j-1} \left( \frac{\gamma^j(\nu x\cdot\xi)}{\nu}\tilde A^{j-1}(\mu x\cdot\eta,x)\right)\,,\]
with $w^0 =0$, we find 
\begin{align*} \left[ w^j\right]_k &\leq \left[ w^{j-1}\right]_k+ C\left( \nu^{k-1} + \nu^{-1} \|D^k\left( A^{j-1}(\mu x\cdot \eta, x)\right)\|\right) \\
&\leq \left[ w^{j-1}\right]_k+ C (\nu^{k-1}+ \nu^{-1}\mu^k)\\
&\leq\left[ w^{j-1}\right]_k+ C\nu^{k-1}\,,\end{align*}
for all $k\leq N_I-(j-1)$, finishing the proof.
\end{proof}

\section{Step perturbation}\label{s:step}
In this section we define our perturbation (Proposition \ref{p:step}). It is constructed from normal and tangential vector fields together with suitable oscillatory functions, whose definitions and properties we collect in the following preliminary subsection.

\subsection{Preliminaries}
Let $\rho\geq1$. We will say that a smooth map $u\in C^\infty(\bar \Omega,\R^n) $ satisfies $(P_\rho)$ if 
\begin{equation}\label{a:Prho}
    \frac{1}{\rho}\mathrm{Id}\leq Du^TDu\leq \rho  \mathrm{Id},
\end{equation}
i.e., the matrices $\rho\mathrm{Id}-Du^T Du$ and $Du^TDu-\rho^{-1}\mathrm{Id} $ are positive semi-definite. 
Note that due to the lower bound, such a map is an immersion. Moreover, by taking the trace we see that \eqref{a:Prho} implies 
\begin{equation}\label{e:C1bound}
    \|Du\|_0 \leq \sqrt{n\rho} \,.
\end{equation}
The quantitative immersion condition $(P_\rho)$ guarantees the existence of a well-behaved normal vector field and allows one to define a tangential projection with controlled derivatives. These objects will be used to construct the perturbation.

    \begin{lemma}[Normal vectorfields]\label{l:normals} Let $\Omega\subset \R^n$ be an open, bounded set and let $u\in C^\infty(\bar\Omega,\R^{n+1})$ satisfy $(P_\rho)$ for some $\rho\geq 1$. Then there exists a normal vectorfield $\zeta=\zeta_u\in C^\infty(\bar\Omega,\R^{n+1})$ such that 
    \begin{equation}\label{e:normal}
        |\zeta|= 1\,,\quad Du^T \zeta = 0\,
    \end{equation}
    and such that for any $k\in \N_0$, the estimates
    \begin{equation}\label{e:normalestimates}
            [\zeta]_k\leq C_k[u]_{k+1}
        \end{equation}
hold, with constants $C_k$ depending only on $n,k$ and $\rho$. Moreover, if $v\in C^1(\bar\Omega,\R^{n+1})$ satisfies $(P_{\tilde \rho})$, 
then the normal vector field $\zeta_v$ satisfies \begin{equation}\label{e:normalcloseness}
           \|\zeta_u-\zeta_v\|_0\leq C[v-u]_1
        \end{equation}
        for a constant only depending on $n$ and $\max\{\rho,\tilde\rho\}$.
    \end{lemma}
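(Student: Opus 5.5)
The normal field will come from the generalized cross product of the columns of $Du$. For $u\in C^\infty(\bar\Omega,\R^{n+1})$ with columns $\partial_1 u,\ldots,\partial_n u$, let $N(x)\in\R^{n+1}$ be the vector whose $k$-th component is $(-1)^{k+1}\det$ of the $n\times n$ matrix obtained from $Du(x)$ by deleting row $k$. By Laplace expansion, $N\cdot v=\det(Du\,|\,v)$ for every $v\in\R^{n+1}$. Taking $v=\partial_j u$ gives $Du^T N=0$. The Cauchy--Binet formula gives $|N|^2=\det(Du^TDu)$. Under $(P_\rho)$ this yields $|N|^2\ge\rho^{-n}$, so $N$ is bounded away from zero. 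We then set $\zeta_u=N/|N|$, which is smooth and satisfies \eqref{e:normal}. In a connected domain the sign choice is consistent; in general this explicit formula fixes the sign, which matters for the closeness estimate below.

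For the estimates \eqref{e:normalestimates}, write $\zeta_u=\Psi(Du)$, where $\Psi(X)=N(X)/|N(X)|$ is smooth on the set $\mathcal K_\rho=\{X\in\R^{(n+1)\times n}: \rho^{-1}\mathrm{Id}\le X^TX\le\rho\,\mathrm{Id}\}$. This set is compact, and $\Psi$ is defined and smooth on a neighborhood of it, since $\det(X^TX)\ge\rho^{-n}$ there. Hence all derivatives of $\Psi$ up to order $k$ are bounded on $\mathcal K_\rho$ by constants depending on $n,k,\rho$. The chain rule/composition estimate from the appendix (the Faà di Bruno type bound \eqref{l:composition}) gives
\[
[\Psi(Du)]_k\le C\big([Du]_k+\|D\Psi\|\ldots\big)\le C\sum_{j}[Du]_1^{\,\cdot}\cdots.
\]
Combined with the interpolation inequalities and the bound $\|Du\|_0\le\sqrt{n\rho}$ from \eqref{e:C1bound}, this collapses to $[\zeta]_k\le C(n,k,\rho)[u]_{k+1}$ for $k\ge1$. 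The case $k=0$ is trivial because $|\zeta|=1$. The main technical point is this interpolation step. It uses the standard fact that products $[Du]_{j_1}\cdots[Du]_{j_s}$ with $\sum j_i=k$ are bounded by $C\|Du\|_0^{s-1}[Du]_k$. That fact is available from the appendix, so no genuine obstacle arises.

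For \eqref{e:normalcloseness}, both $Du(x)$ and $Dv(x)$ lie in the compact set $\mathcal K_{\bar\rho}$ with $\bar\rho=\max\{\rho,\tilde\rho\}$. The segment between them may leave this set, so we cannot simply integrate $D\Psi$ along it. Instead we estimate directly. The map $N$ is a polynomial of degree $n$ in the entries, so
\[
|N(Du)-N(Dv)|\le C(n)\big(\|Du\|_0+\|Dv\|_0\big)^{n-1}|Du-Dv|.
\]
Then the elementary bound $|a/|a|-b/|b||\le 2|a-b|/|a|$ together with $|N(Du)|\ge\rho^{-n/2}$ gives $\|\zeta_u-\zeta_v\|_0\le C[v-u]_1$. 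This works pointwise for any $C^1$ map $v$ satisfying $(P_{\tilde\rho})$, with $C$ depending only on $n$ and $\bar\rho$.
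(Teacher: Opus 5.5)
Your proof is correct. For the construction and for \eqref{e:normalestimates} it coincides with the paper's argument. Your cofactor vector $N$ is, up to an overall sign, the Hodge dual $\star(\partial_1 u\wedge\cdots\wedge\partial_n u)$ used there, and it is normalized the same way. The derivative bounds then come from Lemma~\ref{l:composition} applied to a smooth function of $Du$ whose derivatives are bounded on the compact range of $Du$.

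The genuine difference is in \eqref{e:normalcloseness}. The paper applies the mean value inequality to $F$ on $V_{\max\{\rho,\tilde\rho\}}$ and calls that set convex. It is not: it contains $X$ and $-X$ but not their midpoint $0$. This is exactly the obstruction you point out. Your argument uses three ingredients:
\begin{itemize}
\item the multilinear bound on the $n\times n$ minors,
\item the elementary inequality $\bigl|a/|a|-b/|b|\bigr|\le 2|a-b|/|a|$,
\item the lower bound $|N(Du)|\ge\rho^{-n/2}$.
\end{itemize}
It is purely pointwise and needs no path between $Du(x)$ and $Dv(x)$. It gives the same dependence of the constant on $n$ and $\max\{\rho,\tilde\rho\}$, so it is the more robust route. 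The paper's version can also be repaired by splitting into two cases. When $|Du-Dv|$ is small, the segment stays in a slightly enlarged non-degenerate set. When $|Du-Dv|$ is large, the trivial bound $|\zeta_u-\zeta_v|\le 2$ suffices.

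Two small points on the derivative estimate. First, your displayed chain is only a placeholder, and no separate interpolation step is needed. The first inequality of Lemma~\ref{l:composition}, with $u$ replaced by $Du$ and $F=\Psi$, directly gives $[\zeta]_k\le C[u]_{k+1}\bigl([\Psi]_1+\|Du\|_0^{k-1}[\Psi]_k\bigr)$. Here the suprema of $D^s\Psi$ are taken over $\mathcal K_\rho$, since that is all its proof uses. Second, the case $k=0$ is not settled by $|\zeta|=1$ alone. You also need $[u]_1\ge\rho^{-1/2}$, which follows from $|\partial_i u|^2=(Du^TDu)_{ii}\ge\rho^{-1}$; this gives $C_0=\sqrt{\rho}$.
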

    \begin{proof}
        We let 
        \[\tilde\zeta = \star(\partial_1 u \wedge \partial_2 u\wedge\ldots\wedge\partial_n u)\,,\]
        where $\star$ denotes the Hodge-star in $\R^{n+1}$. Since $u$ satisfies $(P_\rho)$, it is an immersion and therefore $\tilde \zeta\neq 0$. In fact, we have
        \[ |\tilde \zeta| = \sqrt{ \det\left(Du^TDu\right)} \geq \rho^{-n/2}\,.\] 
      We can therefore set
        \[ \zeta = \frac{\tilde\zeta }{|\tilde \zeta|}\,.\]
       Since by the properties of the Hodge-star 
       \[ \partial_i u \cdot \tilde \zeta \mathrm{vol}_{\R^{n+1}} = \partial_i u\wedge \partial_1 u\wedge\ldots\wedge \partial_n u = 0\]
       for all $i=1,\ldots,n$, \eqref{e:normal} holds.
       To show \eqref{e:normalestimates}, we recall \eqref{e:C1bound} and observe that the map \[F:\left(\R^{n+1}\right)^n\to \R^{n+1}\,,  (v_1,\ldots,v_n) \mapsto \frac{\star(v_1\wedge\ldots\wedge v_n) }{\sqrt{\det\left((\langle v_i,v_j\rangle )_{ij}\right)}}\] is analytic in the set $V_\rho=\overline B_{\sqrt{n\rho}}^{(n+1)n}\cap \{\det\left(\langle v_i,v_j\rangle \right)\geq \rho^{-n/2}\} $, with bounded derivatives of all orders (where the bound only depends on $n, \rho$ and the order of the derivative). Thus it follows from Lemma \ref{l:composition} that 
       \[[\zeta]_k = \left[F\vert_{V_\rho}\circ (\partial_1u,\ldots,\partial_n u)\right]_k \leq C [u]_{k+1}\left( \left[F\vert_{V_\rho}\right]_1 + \|Du\|_0^{k-1}\left[F\vert_{V_\rho}\right]_{k}\right)\leq C_k[u]_{k+1}\]
       for a constant only depending on $n, k$ and $\rho $. 
       
       Lastly, \eqref{e:normalcloseness} follows from the mean-value inequality : 
       \[\| \zeta_u-\zeta_v\|_0  = \|F(Du)-F(Dv)\|_0 \leq \|DF\|_0\|Du-Dv\|_0\leq C[v-u]_1\,,\]
       where the supremum in $\|DF\|_0$ is taken over the convex set $V_{\max\{\rho,\tilde\rho\}}$. This finishes the proof.   
    \end{proof}

     \begin{lemma}[Tangential map]\label{l:tangential}
        Let $u\in C^\infty(\bar\Omega,\R^{n+1})$ satisfy $(P_\rho)$ for some $\rho\geq 1$. Then the tangential map 
        \begin{equation}
            T = Du\left(Du^T Du\right)^{-1} :\Omega\to \R^{(n+1)\times n}
        \end{equation}
        is well-defined and satisfies 
        \begin{equation}\label{e:Testimates}
            [T]_k\leq C_k[u]_{k+1}
        \end{equation}
        for all $k\in \N_0$, with constants $C_k$ only depending on $n,k$ and $\rho$.
    \end{lemma}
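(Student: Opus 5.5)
The plan follows the proof of Lemma~\ref{l:normals}. By $(P_\rho)$ the Gram matrix $G=Du^TDu$ satisfies $\rho^{-1}\mathrm{Id}\le G\le\rho\,\mathrm{Id}$. Hence $G$ is invertible, $\|G^{-1}\|\le\rho$, and $T=DuG^{-1}$ is well defined and smooth on $\bar\Omega$. For the estimates, I would write $T=F(Du)$, where
\[
F:\R^{(n+1)\times n}\to\R^{(n+1)\times n},\qquad F(P)=P\,(P^TP)^{-1}.
\]
I would regard $F$ as a map on the compact set
\[
W_\rho=\{P:\ |P|\le\sqrt{n\rho},\ \rho^{-1}\mathrm{Id}\le P^TP\le\rho\,\mathrm{Id}\},
\]
which contains the range of $Du$ by \eqref{e:C1bound}. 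On a neighbourhood of $W_\rho$ (for instance where $P^TP\ge(2\rho)^{-1}\mathrm{Id}$ and $|P|\le 2\sqrt{n\rho}$), $F$ is real analytic, since matrix inversion is analytic on invertible matrices. On this neighbourhood all derivatives of $F$ are bounded by constants depending only on $n$, $\rho$ and the order of the derivative.

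The case $k=0$ is immediate: $\|T\|_0\le\|Du\|_0\|G^{-1}\|_0\le C(n,\rho)$. Since $[u]_1\ge c(n,\rho)>0$ by the lower bound in $(P_\rho)$, this gives $\|T\|_0\le C_0[u]_1$. For $k\ge1$ I would apply the composition estimate (Lemma~\ref{l:composition} in the appendix), exactly as in the proof of Lemma~\ref{l:normals}:
\[
[T]_k=[F\circ Du]_k\le C\Big([F]_1[Du]_k+\|Du\|_0^{k-1}[F]_k[Du]_1\Big).
\]
Here $[F]_j$ denotes the sup of $|D^jF|$ over the relevant set. The term $[Du]_1$ should be read as $[Du]_k$ in the standard form of the estimate: the chain-rule bound is $C([F]_1[Du]_k+[F]_k\|Du\|_0^{k-1}[Du]_k)$, with the second term arising from interpolation. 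Using $\|Du\|_0\le\sqrt{n\rho}$ and $[Du]_k=[u]_{k+1}$, this yields $[T]_k\le C_k[u]_{k+1}$ with $C_k=C_k(n,k,\rho)$.

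The only point that needs care is that the composition lemma is applied with derivative bounds of $F$ on a convex set, or at least one on which the chain rule and mean-value arguments are valid, that contains the range of $Du$. If the appendix version only needs pointwise bounds on $D^jF$ along the range of $Du$, then $W_\rho$ or its neighbourhood suffices and there is no issue. Otherwise I would avoid convexity altogether: expand $D^k(F\circ Du)$ by Faà di Bruno, so that only pointwise bounds on $D^jF$ at points of the range of $Du$ enter, and then interpolate the products $\prod_i[Du]_{m_i}$ with $\sum m_i=k$ as $\|Du\|_0^{s-1}[Du]_k$. This gives the same estimate.
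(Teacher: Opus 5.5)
\textbf{Same route as the paper, but the step for $k\ge 2$ fails, and so does the stated estimate.} Your route is the paper's route: write $T=F(Du)$ with $F(P)=P(P^TP)^{-1}$, note that $F$ has bounded derivatives near the compact set containing the range of $Du$, and apply Lemma~\ref{l:composition} as in Lemma~\ref{l:normals}. Your cases $k=0$ (via $[u]_1\ge\rho^{-1/2}$) and $k=1$ (pure chain rule, $|\partial_jT|\le\sup|DF|\,|\partial_jDu|$) are correct.

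For $k\ge2$, the step you call ``interpolation'' does not hold. That step bounds $\prod_i[Du]_{m_i}$, with $\sum_i m_i=k$, by $C\|Du\|_0^{s-1}[Du]_k$; it is exactly what the first inequality of Lemma~\ref{l:composition} encodes. This is a whole-space (or torus) inequality. On a bounded $\Omega$ it fails without lower-order terms: an affine $Du$ has $[Du]_k=0$ but $[Du]_1\ne0$.

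In fact \eqref{e:Testimates} itself is false for $k\ge2$. Take $u(x)=(x,\tfrac12|x|^2)$ on the unit ball $B_1\subset\R^n$. Then $Du^TDu=\mathrm{Id}+x\otimes x$, so $(P_2)$ holds, and $[u]_3=0$. On the other hand
\[
T=\begin{pmatrix}\mathrm{Id}\\ x^T\end{pmatrix}\Big(\mathrm{Id}-\frac{x\otimes x}{1+|x|^2}\Big),\qquad \partial_1^2T_{11}(0)=-2,
\]
so $[T]_2\ge2>C_2[u]_3$ for every $C_2$. The same $u$ also defeats \eqref{e:normalestimates}, since $\zeta=\pm(-x,1)/\sqrt{1+|x|^2}$ is not affine. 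So the gap is shared with the paper's one-line proof. It is a defect of the statement, not of your strategy relative to the paper.

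\textbf{What can be proved, and what is needed.} Your Fa\`a di Bruno paragraph, once the interpolation is dropped, proves
\[
[T]_k\le C_k\sum_{s=1}^{k}\ \sum_{\substack{m_1+\dots+m_s=k\\ m_i\ge1}}\ \prod_{i=1}^{s}[u]_{m_i+1},\qquad C_k=C_k(n,k,\rho).
\]
Only pointwise bounds on $D^sF$ along the range of $Du$ enter, so your convexity worry is indeed harmless. This is all that is used downstream. Under hypotheses like \eqref{a:stepu}, namely $[u]_{m+1}\le\sdel\mu^m$ for every $1\le m\le k$ with $\delta\le1$, each product is at most $\delta^{s/2}\mu^k\le\sdel\mu^k$. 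This recovers the bound $[T]_k\le C(1+\sdel\mu^k)$ actually used in the proof of Proposition~\ref{p:step} and in the substages. Alternatively, interpolation for full $C^k$ norms on a smooth bounded domain gives $[T]_k\le C(1+[u]_{k+1})$, with $C$ now also depending on $\Omega$. The homogeneous bound $C_k[u]_{k+1}$, however, cannot be proved.
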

    \begin{proof}
        Clearly, the map is well-defined thanks to assumption \eqref{a:Prho}. Moreover, the map $Du\mapsto Du(Du^TDu)^{-1}$ is algebraic in the entries, so that \eqref{e:Testimates} follows from Lemma \ref{l:composition} similarly to the proof of \eqref{e:normalestimates}.
    \end{proof}

    The oscillatory component of the perturbation is built from special periodic functions satisfying the convex integration inclusion \eqref{e:inclusion}. The following lemma provides such functions.

    \begin{lemma}\label{l:oscillatory functions}
        There exist functions $\gamma_1,\gamma_2\in C^{\infty}(\S^1)$ with zero mean such that for all $t\in \S^1$
       \begin{equation}\label{e:inclusion}
    2\gamma_1'(t) +(\gamma_2'(t))^2 = 1\,.
\end{equation}
    \end{lemma}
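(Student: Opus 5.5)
The plan is to solve \eqref{e:inclusion} by prescribing $\gamma_2$ first and then obtaining $\gamma_1$ by integration. Rewrite the condition as
\[
\gamma_1'(t) = \tfrac12\bigl(1-(\gamma_2'(t))^2\bigr).
\]
A smooth periodic function $f$ on $\S^1$ is the derivative of a smooth periodic function exactly when $\int_0^{2\pi} f = 0$. So the requirement on $\gamma_2$ is
\[
\int_0^{2\pi}(\gamma_2')^2\,dt = 2\pi.
\]

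Choose the simplest candidate, $\gamma_2(t) = c\sin t$, where $c>0$ is still to be fixed. This function is smooth and periodic with zero mean. We have $\int_0^{2\pi} c^2\cos^2 t\,dt = \pi c^2$, so taking $c=\sqrt2$ gives $\int_0^{2\pi}(\gamma_2')^2 = 2\pi$. Then
\[
\tfrac12\bigl(1-2\cos^2 t\bigr) = -\tfrac12\cos 2t,
\]
which has zero mean over $[0,2\pi]$.

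The natural choice is therefore $\gamma_1(t) = -\tfrac14\sin 2t$. It is periodic and smooth with zero mean, since it is a pure Fourier mode. Its derivative is $\gamma_1' = -\tfrac12\cos 2t$, so
\[
2\gamma_1' + (\gamma_2')^2 = -\cos 2t + 2\cos^2 t = 1
\]
by the double-angle identity. This verifies \eqref{e:inclusion} for all $t$.

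There is essentially no obstacle here. The only point needing care is the normalisation $c=\sqrt2$: it makes the mean of $1-(\gamma_2')^2$ vanish, which is what makes $\gamma_1$ periodic. If $\gamma_1$ were obtained by integration instead of the explicit formula, one would also subtract a constant to force zero mean; the explicit trigonometric choice makes this unnecessary.
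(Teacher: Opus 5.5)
Your proposal is correct and is essentially the paper's proof: you arrive at the same explicit choice $\gamma_1(t)=-\tfrac14\sin 2t$, $\gamma_2(t)=\sqrt2\sin t$, and verify the identity with the double-angle formula. The extra step motivating the normalisation through $\int_0^{2\pi}(\gamma_2')^2\,dt=2\pi$ is a helpful explanation but does not change the argument.
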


    \begin{proof}
        It suffices to take 
        \begin{equation}\label{d:corrugations}
\gamma_1(t) = -\frac{1}{4}\sin(2t) \,, \gamma_2(t) = \sqrt{2} \sin(t)\,.\qedhere
\end{equation}
    \end{proof}

\subsection{Step perturbation}

We now introduce the basic perturbation step of the iteration. Its ansatz is the same as in \cite[Lemma 3.1]{CaoHirschInauen25}, however the estimates are finer. Combining the geometric objects from Lemmas~\ref{l:normals} and \ref{l:tangential} with the oscillatory functions from Lemma~\ref{l:oscillatory functions}, the high-frequency perturbation adds a primitive metric in the direction $\eta$ upto some error terms and a symmetric derivative.  The perturbation involves three frequency scales. The highest frequency $\nu$ corresponds to the newly added oscillation. The map $u$ is itself assumed to oscillate at frequency $\mu\le \nu$, while the coefficient $a$ varies at the lower frequency $\lambda\le \mu$. This separation of scales is essential for the integration by parts argument to yield a gain starting from the second family onward (see Proposition~\ref{p:substage2}). In applying the perturbation, the corrector field $w$ is constructed using Proposition~\ref{p:IBP} in order to cancel the terms on the second line of \eqref{e:inducedmetric} up to higher-order errors and terms supported in a lower-dimensional block. In particular, $w$ is expected to oscillate at frequency $\nu$, which motivates assumption~\eqref{a:stepw}.
Lastly, in \eqref{e:sharpD2} we isolate the biggest contribution to the growth of the $C^2$ norm due to the addition of the perturbation (compare with the heuristics in Subsection \ref{ss:intronovelties}).

\begin{proposition}[Step perturbation]\label{p:step}
    Let $\Omega\subset\R^n$ be a open and bounded. Assume that $u\in C^\infty(\overline \Omega,\R^{n+1}) $  satisfies $(P_\rho)$ for some $\rho\geq 1$ and let $\zeta$ and $T$ be as in Lemma  \ref{l:normals}, \ref{l:tangential}. For any $0<\delta<1 $, frequency $\nu\geq 1$, unit vector $\eta\in\S^{n-1}$, $a\in C^\infty(\bar\Omega)$ and corrector vector-field $w\in C^\infty(\bar \Omega,\R^n)$, the perturbed map
     \begin{equation}\label{d:perturbation}
         v(x) = u(x) +\delta T(x)\left(\frac{a^2(x)\gamma_1(\nu x\cdot \eta)}{\nu}\eta + w(x)\right) + \delta^{\sfrac{1}{2}}\frac{a(x)\gamma_2(\nu x\cdot \eta)}{\nu}\zeta(x)\,
     \end{equation}
has induced metric given by 
     \begin{equation}\label{e:inducedmetric}
     \begin{split}
           Dv^TDv&= Du^TDu +\delta a^2 \eta\otimes\eta + \delta\frac{2\pi}{\nu^2 } \nabla a\otimes\nabla a + \delta^{\sfrac{3 }{2}} \mathcal R +2\delta\,\sym(Dw)\\
         &\hspace{1cm }  + 2\delta^{\sfrac{1}{2}}\frac{a\gamma_2}{\nu}\sym\left( Du^T D\zeta\right) +\delta \frac{2\gamma_1 +\gamma_2\gamma_2'}{\nu}\sym\left( \nabla(a^2)\otimes \eta\right) +\delta \frac{\gamma_2^2-2\pi}{\nu^2}\nabla a\otimes \nabla a\,,
     \end{split}
 \end{equation}
     for some $\mathcal R\in C^{\infty}(\bar \Omega, \Sym_n)$. 

    Moreover, if $N\in \N$ and  the following estimates are satisfied 
    \begin{align}
    & \|a\|_0\leq \rho\label{a:stepa0}\\
        &[a]_k\leq \lambda^k     \qquad \text{ for }k=1,\ldots,N+1,\label{a:stepa}\\
        & [u]_{k+1} \leq \delta^{\sfrac{1}{2}}\mu^k\qquad \text{ for } k =1,\ldots, N
        +1\label{a:stepu}\\
        &[w]_k \leq \nu^{k-1}\qquad \text{ for } k=0,\ldots, N-j+1,\label{a:stepw}
     \end{align}
    for some $\lambda$ and $\mu$ satisfying \[\nu\geq\mu\geq\lambda\geq \delta^{-\sfrac{1}{2}}\] and $j\leq N$,  then it holds
     \begin{align}
         &\|\mathcal R\|_0 \leq C\label{e:steprest}\\
         &[v-u]_{k} \leq  C \sdel\nu^{k-1}\qquad\text{ for all } k=0,\ldots, N-j+1\,\label{e:stepv-u}
     \end{align}
     for some constant $C$ only depending on $N,n$ and $\rho$. Moreover, for every $1\leq k\leq l\leq n$, it holds
     \begin{equation}\label{e:sharpD2}
         \|\partial^2_{kl}  v- \left(\partial_{kl}^2 u +\sdel \nu a\gamma_2''(\nu x\cdot \eta)\zeta (\eta\otimes\eta)_{kl}\right)\|_0\leq C\sdel\left( \lambda +\sdel \nu\right)\,. 
     \end{equation}
\end{proposition}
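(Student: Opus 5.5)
The plan is a direct computation of $Dv$, followed by the expansion of $Dv^TDv$ and a grouping of the terms by their order in $\delta$. I write $v=u+\delta T\Phi+\sdel \psi\zeta$, where $\Phi=\frac{a^2\gamma_1}{\nu}\eta+w$ and $\psi=\frac{a\gamma_2}{\nu}$; the arguments $\nu x\cdot\eta$ of $\gamma_1,\gamma_2$ are suppressed. Then
\[
Dv=Du+\delta T D\Phi+\delta (DT)\Phi+\sdel\zeta\otimes\nabla\psi+\sdel\psi D\zeta .
\]
Three identities from Lemmas \ref{l:normals} and \ref{l:tangential} drive the computation: $Du^TT=\mathrm{Id}$, $Du^T\zeta=0$, and $\zeta^TD\zeta=0$ (since $|\zeta|=1$). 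With these, the expansion $Dv^TDv=Du^TDu+2\sym(Du^T(Dv-Du))+(Dv-Du)^T(Dv-Du)$ gives:
\begin{itemize}
\item the linear cross terms $2\delta\,\sym(D\Phi)+2\delta\,\sym(Du^T DT\,\Phi)+2\sdel\psi\,\sym(Du^TD\zeta)$;
\item the normal--normal square $\delta\nabla\psi\otimes\nabla\psi$;
\item products involving $\delta T D\Phi$ or $\delta DT\,\Phi$, which are $O(\delta^{3/2})$ once $\nu\geq\delta^{-1/2}$-type bounds are used.
\end{itemize}

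Next I expand the explicit pieces. I have
\[
D\Phi=a^2\gamma_1'\,\eta\otimes\eta+\frac{\gamma_1}{\nu}\eta\otimes\nabla(a^2)+Dw,
\qquad
\nabla\psi=a\gamma_2'\eta+\frac{\gamma_2}{\nu}\nabla a .
\]
Hence
\[
\nabla\psi\otimes\nabla\psi=a^2\gamma_2'^2\eta\otimes\eta+\frac{\gamma_2\gamma_2'}{\nu}\,2\,\sym(a\nabla a\otimes\eta)+\frac{\gamma_2^2}{\nu^2}\nabla a\otimes\nabla a .
\]
The identity \eqref{e:inclusion} turns $2\gamma_1'+\gamma_2'^2$ into $1$, which produces $\delta a^2\eta\otimes\eta$. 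Since $2a\nabla a=\nabla(a^2)$, the two first-order terms combine into $\delta\frac{2\gamma_1+\gamma_2\gamma_2'}{\nu}\sym(\nabla(a^2)\otimes\eta)$. Writing $\gamma_2^2=(\gamma_2^2-2\pi)+2\pi$ gives the remaining two terms of \eqref{e:inducedmetric}; the constant is chosen as in the paper's convention for the mean of $\gamma_2^2$. Every other contribution is placed into $\delta^{3/2}\mathcal R$. In particular, $2\delta\,\sym(Du^TDT\,\Phi)$ belongs there, as does $\delta\psi^2D\zeta^TD\zeta$ and all products carrying an extra factor $\sdel$ or $\delta$.

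To prove \eqref{e:steprest} I estimate each remainder term using \eqref{a:stepa0}--\eqref{a:stepw} together with \eqref{e:normalestimates} and \eqref{e:Testimates}, which give $[T]_1,[\zeta]_1\le C\sdel\mu$. The main obstacle is checking that every piece is truly $O(\delta^{3/2})$ with constants independent of the frequencies. For example, $\delta\,Du^TDT\,\Phi$ has size $\delta\cdot\sdel\mu\,(\nu^{-1}+\|w\|_0)$, and $\|w\|_0\le\nu^{-1}$ by \eqref{a:stepw} with $k=0$, so this is $\le C\delta^{3/2}$ because $\mu\le\nu$. Similarly $\delta\psi^2|D\zeta|^2\le C\delta\,\nu^{-2}\delta\mu^2\le C\delta^2$. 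The cross term $\sdel\zeta\otimes\nabla\psi$ paired with $\delta TD\Phi$ vanishes in its leading part because $\zeta^TT=0$, while its pairing with $\delta DT\,\Phi$ is bounded by $\delta^{3/2}\cdot\sdel\mu\nu^{-1}$. Finally, quadratic terms such as $\delta^2 D\Phi^TT^TTD\Phi$ are $O(\delta^2)$ since $|D\Phi|\le C$. This bookkeeping is tedious but routine.

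For \eqref{e:stepv-u}, I apply the Leibniz rule \eqref{e:Leibniz} and the chain rule to $\delta T\Phi+\sdel\psi\zeta$. Each derivative costs at most $\nu$, because $\lambda,\mu\le\nu$, and the $k$-th derivative of $\Phi$ and $\psi$ is $\le C\nu^{k-1}$ by \eqref{a:stepa} and \eqref{a:stepw}. The range $k\le N-j+1$ is dictated by \eqref{a:stepw}. For \eqref{e:sharpD2}, I differentiate twice and isolate the unique term in which both derivatives hit $\gamma_2$, namely $\sdel\nu a\gamma_2''\zeta\,\eta\otimes\eta$. All other second derivatives of $\sdel\psi\zeta$ involve at most one factor $\nu$ together with one factor $\lambda$ or $\sdel\mu$, giving size $\le C\sdel\lambda+C\delta\nu$. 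The $\delta T\Phi$ part has second derivatives $\le C\delta\nu$ by the same bounds.
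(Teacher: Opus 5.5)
Your proposal is correct and follows essentially the same route as the paper. You expand $Dv$, use $Du^TT=\mathrm{Id}$, $Du^T\zeta=0$, $T^T\zeta=0$ and $\zeta^TD\zeta=0$ together with \eqref{e:inclusion} to isolate the displayed terms, collect the rest into $\delta^{3/2}\mathcal R$, and estimate everything term by term via the Leibniz rule, \eqref{e:normalestimates} and \eqref{e:Testimates}. One small point, which the paper's proof shares: the second-derivative bound on $\delta T\Phi$ used for \eqref{e:sharpD2} requires $[w]_2\le\nu$, and \eqref{a:stepw} only provides this when $j\le N-1$, not for all $j\le N$ as stated.
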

\begin{remark}\label{r:osc}
    The constant $\delta $ will later represent the size of the metric defect and can therefore be assumed to be very small. In particular, the error term $\delta^{\sfrac{3}{2}}\mathcal R$, which at first sight seems too large due to the lack of a prefactor $\nu^{-1}$, is in fact sufficiently small. This allows us to choose oscillatory functions $\gamma_1,\gamma_2$ satisfying the simpler relation \eqref{e:inclusion} instead of the more complicated immersion relation (see for example (2.12) in \cite{CDS}).
\end{remark}
\begin{proof}
    We begin by writing
    \begin{equation}\label{e:derivativedecomp}
        \begin{split}
              Dv &= Du + \delta a^2 \gamma_1' T \eta\otimes\eta + \sdel a\gamma_2'\zeta\otimes\eta \\&  \quad + \delta   TDw+ \delta DT  w \\&
         \quad + \delta\frac{\gamma_1}{\nu}   T\eta\otimes\nabla(a^2) + \delta\frac{\gamma_1}{\nu}a^2 DT\eta \\
         &\quad+ \sdel \frac{\gamma_2}{\nu} \zeta\otimes\nabla a + \sdel \frac{\gamma_2}{\nu}aD\zeta\\
         &=: Du +  A_1 +A_2 \\
         & \quad +  B_1 + B_2\\
         &\quad +  C_1 +C_2\\
         &\quad +  D_1 +D_2\,.
        \end{split}
    \end{equation}
    
    Let us abbreviate $A= A_1+A_2,\ldots,D=D_1+D_2$, so that
\[ Dv^TDv - Du^T Du=  2\sym\left(Du^T (A+B+C+D)\right) + (A+B+C+D)^T(A+B+C+D)\,.\]
We now proceed term by term and single out the terms which will contribute to $\mathcal R$ to find the expression \eqref{e:inducedmetric}. 

By definition of $T$ and $\zeta$ we find 
\begin{align*}2\sym(Du^T(A+B+C+D) )& =2 \delta a^2 \gamma_1' \eta\otimes \eta + 2\delta \sym(Dw) \\
&\quad + 2\delta \frac{\gamma_1}{\nu}\sym(\eta\otimes\nabla(a^2)) + 2\sdel \frac{a\gamma_2}{\nu}\sym(Du^TD\zeta) \\ 
&\quad +2\sym(Du^T(B_2+C_2))\,.
&\end{align*}
On the other hand, using $(T \eta)^T \zeta = (TDw)^T \zeta = 0$ we find 
\[ A_1^T A_2 = B_1^T A_2 = C_1^T A_2 = A_1 ^T D_1 = B_1^T D_1 = C_1^T D_1 = 0\,,\]
and using $|\zeta|=1$, and so in particular $\zeta^T D\zeta =0$, we find moreover 
\[ A_2^T D_2 = D_1^T D_2 = 0\,. \]
We can then write 
\begin{align*}
    (A+\ldots+D)^T(A+\ldots+D) &= A^T A +2\sym(A^T(B+C+D))  \\ & \qquad + (B+C+D)^T (B+C+D)\\
    & = A_1^TA_1 + A_2^T A_2+ 2\sym( A_1^T(B+C+D_2))\\ 
   & \qquad + 2\sym (A_2^T(B_2+C_2))  + 2\sym(A_2^T D_1) \\ &\qquad 
   + (B+C+D)^T (B+C+D)
\end{align*}
and 
\[
    (B+C+D)^T (B+C+D)  = (B+C)^T(B+C) + 2 \sym((B+C)^T D ) + D_1^TD_1 + D_2^TD_2\,.
\]
Now notice that 
\begin{align*}
    A_2^T A_2 &= \delta a^2 (\gamma_2')^2\eta\otimes\eta\,,\qquad  2\sym(A_2^T D_1 ) = \delta\frac{\gamma_2\gamma_2'}{\nu}\sym(\eta\otimes\nabla a)\,,  \\
D_1^TD_1 &= \delta \frac{\gamma_2^2}{\nu^2}\nabla a\otimes \nabla a. 
\end{align*} 
Hence, a regrouping yields 
\begin{align*}
    Dv^TDv - Du^T Du &= \delta a^2 (2\gamma_1' +(\gamma_2')^2)\eta\otimes \eta +2\delta\sym(Dw) \\
    &\quad +2\sdel \frac{a\gamma_2}{\nu}\sym\left( Du^TD\zeta\right) + \delta\frac{2\gamma_1 +\gamma_2\gamma_2'}{\nu}\sym(\eta\otimes\nabla a)+\delta \frac{\gamma_2^2}{\nu^2}\nabla a\otimes \nabla a\\
    &\quad+  2\sym(Du^T(B_2+C_2))+ A_1^TA_1 + 2\sym( A_1^T(B+C+D_2))\\ 
   & \quad + 2\sym (A_2^T(B_2+C_2))+(B+C)^T(B+C)+D_2^TD_2\,.
\end{align*}
By \eqref{e:inclusion}, this shows
 \eqref{e:inducedmetric} with 
\begin{equation}\label{d:R}
    \begin{split}
    \delta^{\sfrac{3}{2}}\mathcal R&:=  2\sym(Du^T(B_2+C_2))+ A_1^TA_1 + 2\sym( A_1^T(B+C+D_2)) \\ 
   & \quad + 2\sym (A_2^T(B_2+C_2))+(B+C)^T(B+C)+D_2^TD_2\,.
\end{split}
\end{equation}
Assume now that \eqref{a:stepa0}--\eqref{a:stepw} are satisfied. In particular, from \eqref{e:normalestimates} and \eqref{e:Testimates} we get
\[ [T]_k+ [\zeta]_k \leq C(1+\sdel\mu^k)\quad \text{ for all } k=0,\ldots,N+1,\]
for a constant only depending on $N,n $ and $\rho$. This immediately gives \eqref{e:stepv-u} for $k=0$ in view of the assumptions and $\delta<1$. We now compute with the help of the Leibniz rule \eqref{e:Leibniz} and Lemma \ref{l:composition} 
\begin{align*}
    [A_1]_k&\leq C\delta\left(\nu^k\|a^2T\eta\|_0 + [a^2]_k\|T\|_0 + \|a^2\|_0[T]_k \right) \leq C\delta\left( \nu^k +\lambda^k +1+\sdel \mu^k\right)\\
    &\leq C\delta\nu^k\qquad  \text{ for all } k=0,\ldots, N+1
\end{align*}
for a constant $C$ only depending on $n,N$ and $\rho$.
Proceeding analogously, we get the estimates
\begin{alignat*}{2}
    [A_2]_k&\leq C\sdel \nu^k\hspace{2cm}&& \text{ for all }k= 0,\ldots, N+1,\\
    [B_1]_k&\leq C\delta\nu^k&& \text{ for all }k= 0,\ldots, N-j,\\
     [B_2]_k&\leq C\delta^{\sfrac{3}{2}}\mu\nu^{k-1}&& \text{ for all }k= 0,\ldots, N-j+1,\\
      [C_1]_k&\leq C\delta\lambda\nu^{k-1}&& \text{ for all }k= 0,\ldots, N,\\
       [C_2]_k&\leq C\delta^{\sfrac{3}{2}}\mu\nu^{k-1}&& \text{ for all }k= 0,\ldots, N,\\
        [D_1]_k&\leq C\sdel\lambda \nu^{k-1}&& \text{ for all }k= 0,\ldots, N,\\
         [D_2]_k&\leq C\delta\mu\nu^{k-1}&& \text{ for all }k= 0,\ldots, N,\,
\end{alignat*}
with a constant $C$ only depending on $n,N$ and $\rho$. This yields \eqref{e:stepv-u} in view of the expression \eqref{e:derivativedecomp}, $\lambda\leq\mu\leq \nu$ and $\delta<1$. The bound \eqref{e:steprest} follows as well given \eqref{d:R}.

Lastly, for any fixed $1\leq k\leq l\leq n$, observe that
\[\partial^2_{kl} v-\partial^2_{kl} u = \partial_k(A_2)_{\cdot l} + \partial_k\left( A_1+B+C+D\right)_{\cdot l}\,,\]
where for a matrix $M$,  $M_{\cdot l}$ denotes its $l$-th column. Observe 
\begin{align*}
    \partial_k (A_2)_{\cdot l } &= \partial_k\left(\sdel a\gamma_2'\zeta \eta_l\right) = \sdel \nu a \gamma_2''\zeta \eta_l\eta_k + \sdel \gamma_2' \partial_k( a\zeta \eta_l)\,,
\end{align*} 
so that 
\[ \|\partial^2_{kl} v-\partial^2_{kl} u -\sdel \nu a \gamma_2''\zeta \eta_l\eta_k\|_0\leq C\left(\sdel[ a\zeta]_1+[A_1+B+C+D]_1 \right)\,. \]
The estimate \eqref{e:sharpD2} then follows from
\[[ a\zeta]_1 \leq C\left(\lambda +\sdel\mu\right)\]
and 
\[[A_1+B+C+D]_1 \leq C\sdel \left( \lambda +\sdel\nu\right)\,.\]
This concludes the proof.
\end{proof}

\section{Iterative Proposition: Stage}\label{s:stage}

In this section we state and prove the main iteration proposition. It allows us to construct, from an immersion $u$ whose induced metric is already sufficiently close to the target metric $g$, a new immersion $v$ whose metric deficit is decreased by the factor $\Lambda^{-1} := K^{-J}$. This improvement comes at the cost of increasing the $C^2$ norm by a factor $K^{J(n-1)+n^2} = \Lambda^{n-1}\Lambda^{n^2/J}$, where the latter factor is negligible for large $J$. This formalizes the heuristic argument outlined at the end of Subsection \ref{ss:intronovelties}. The "loss of domain" from $U$ to $V$ arises from a mollification step. Compare the exponent of $K$ in \eqref{e:C2} to the corresponding one in  \cite{CaoHirschInauen25}[Proposition 3.4]. 

    \begin{proposition}[Stage]\label{p:stage} Let $U\subset \R^n$ be a bounded open set. For any $J\in\mathbb{N},$ there exist positive constants $\delta_*(n, J, g),\, r_*(n, J, g), \,c_*(n, J, g)$ such that the following holds.
    
If $u\in C^2(\bar U,\R^{n+1})$ is an immersion such that 
    \begin{align}
        &\|g- Du^TDu - \delta H_0 \|_0 \leq r\delta\,, \label{a:metricdeficit}\\
        &\|u \|_2 \leq \delta^{\sfrac{1}{2}}\lambda,\label{a:C2}
    \end{align}
for some constants $r>0, \, 0<\delta\leq1$ and $\lambda\geq \delta^{-\sfrac{1}{2}}$, and if 
\[ r\leq r_*\,,\qquad \delta\leq \delta_*,\]
then for any bounded open  $V \Subset U$, and any 
\[\hat\delta<\frac{r\delta}{|H_0|},  \quad K\geq c_*\]
there exists an immersion $v\in C^2(\overline{V}, \R^{n+1})$ such that 
\begin{align}
        &\|g- Dv^tDv - \hat\delta h_0 \|_0 \leq C\delta\left(K^{-J}+\delta^{\sfrac{1}{2}}\right) + \lambda^{-2}\,, \label{e:metricdeficit}\\
        &\|v-u\|_k \leq  C \delta^{\sfrac{1}{2}}\lambda^{k-1}\,, \text{ for }k=0,1\,, \label{e:C1}\\
        &\|v\|_2 \leq C\delta^{\sfrac{1}{2}}\lambda d^{-1}K^{J(n-1)+n^2}\,,\label{e:C2}
\end{align}
 where $d=\min\{1, \mathrm{dist}(V, \partial U)\}$ and the constant $C$ only depends on $n, g, J$.
\end{proposition}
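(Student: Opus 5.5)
Following the heuristics of Subsection~\ref{ss:intronovelties}, the plan is to build $v$ by $n_*$ successive applications of Proposition~\ref{p:step}, ordered by subfamilies, after a preliminary mollification. First I mollify $u$ at scale $\ell\sim d\lambda^{-1}$ (times a small constant) to obtain $\tilde u$ on $V$ with $[\tilde u]_{k+1}\le C\delta^{\sfrac12}\lambda^k d^{-k}$ for all $k\le N$ and $\|\tilde u-u\|_1\le C\delta^{\sfrac12}\ell\lambda$. The $C^1$ change is too large to leave in the defect, so I instead mollify the defect $g-Du^TDu$ directly (commutator estimate). This gives a smooth $H:=\delta^{-1}(g-D\tilde u^TD\tilde u-\hat\delta H_0)$ with $\|H-H_0\|_0\le r+C\lambda^{-2}\delta^{-1}$, which stays below $r_K$, and with $[H]_k\le C(\lambda/d)^k$. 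The $\lambda^{-2}$ term in \eqref{e:metricdeficit} is the price of this step. I then choose $N_K\gg J$, $\lambda_0=C\lambda/d$, and frequencies $\lambda_l$ as below, and apply Lemma~\ref{l:babykaellen}. This yields coefficients $a_{kl}\ge r_K$ with $\delta H=\sum\delta a_{kl}^2\eta_{kl}\otimes\eta_{kl}+\sum_l\delta\frac{2\pi}{\lambda_l^2}\nabla a_{1l}\otimes\nabla a_{1l}+\delta E$, where $\|E\|_0\le C(\lambda_0/\lambda_1)^{2(J+1)}$.

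The perturbations are added in the order $(1,1),(1,2),\dots,(1,n),(2,2),\dots,(n,n)$. The frequencies within family $k$ are $\nu_{k,l}=K\nu_{k,l-1}$, with the first one of each family equal to $K^{J}$ times the last of the previous family. The first family starts at $K^J\lambda_0$. This gives a total factor of at most $K^{J(n-1)+n^2}$, as in \eqref{e:C2}. At step $(k,l)$ I apply Proposition~\ref{p:step} with $\eta=\eta_{kl}$, with $\mu$ the previous frequency and $\lambda=\lambda_0$, and with a corrector $w$ chosen via Proposition~\ref{p:IBP} (with $i=k$, $\xi=\eta_{kl}$). Using the $\nabla a\otimes\nabla a$ term of \eqref{e:inducedmetric} against the Kaellen term explains why $\lambda_l$ are the first-family frequencies. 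Terms with the non-mean-zero factor $\gamma_2^2-2\pi$ are of size $\lambda_0^2/\nu^2$ and are small.

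The key term is $2\delta^{\sfrac12}\frac{a\gamma_2}{\nu}\sym(Du^TD\zeta)$. Writing $Du^TD\zeta=-\big(\partial^2_{pq}u\cdot\zeta\big)_{pq}$ and using \eqref{e:sharpD2} inductively, the second fundamental form of the current map equals that of $\tilde u$, which is of size $\delta^{\sfrac12}\lambda_0$ and hence negligible. To it are added the sums $\delta^{\sfrac12}\nu' a'\gamma_2''(\nu'x\cdot\eta')(\zeta\cdot\zeta')\eta'\otimes\eta'$ over previously added directions $\eta'$, up to errors $C\delta^{\sfrac12}(\lambda_0+\delta^{\sfrac12}\nu')$. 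Here $\zeta\cdot\zeta'=1+O(\delta^{\sfrac12})$ by \eqref{e:normalcloseness}. Previous directions in the same family $k$ or earlier have the form $A\odot\eta'$ with $A_p=0$ for $p<k$ after symmetrization (note $\eta'\otimes\eta'=\frac12\eta'\odot\eta'$). For the same family, Proposition~\ref{p:IBP} applies with $\mu=\nu'$, leaving an error $(\nu'/\nu)^J\nu'/\nu\le K^{-J}$, a mixed part of size $\lambda_0/\nu$, and a lower-block part in $\mathcal V_{k+1}$. For earlier families the prefactor is $\nu'/\nu\le K^{-J}$, which is already small.

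The lower-block remainders lie in $\mathcal V_{k+1}$. They are of size $O(1)\cdot\delta$ but smooth only at frequency $\nu$, so they cannot simply be absorbed into coefficients that vary at $\lambda_0$. This is the step I expect to be the main obstacle. My first attempt is to mollify and absorb them by redefining the later coefficients $a_{pq}$, $p>k$, through $L_{pq}$ (Lemma~\ref{l:basicdecomp}). The catch is that this makes later coefficients oscillate at frequency $\nu_{k,n}$, which is exactly why the next family jumps by $K^J$. So in family $k+1$ the role of "$\lambda$" is played by $\nu_{k,n}$, and the ratio $\nu_{k,n}/\nu_{k+1,l}\le K^{-J}$ gives the required smallness of $\mathcal F_m$ and of the Picard-type errors. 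The remainders must also be small enough that the coefficients stay positive, which needs $r$ and $\delta$ small. If this works, the remaining errors sum to $C\delta(K^{-J}+\delta^{\sfrac12})$, and \eqref{e:C1} and \eqref{e:C2} follow from \eqref{e:stepv-u} and \eqref{e:sharpD2} summed over the steps.
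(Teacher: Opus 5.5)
Your overall architecture matches the paper's: mollify, decompose with Lemma~\ref{l:babykaellen}, add the families in order, integrate by parts within a family, and absorb the $\mathcal V_{k+1}$ remainders into the next family's amplitudes, which then vary at the last frequency of the previous family and force a $K^J$ jump. The gap is in your frequency bookkeeping, which does not give \eqref{e:C2}. You start family $1$ at $K^J\lambda_0$ and jump by $K^J$ at the start of every family, so there are $n$ such jumps and the final frequency is $K^{nJ+n(n-1)/2}\lambda_0$. This exceeds $K^{J(n-1)+n^2}\lambda_0$ as soon as $J>n(n+1)/2$, which is exactly the regime in which the proposition is used. In the end it would only give $\theta<1/(1+2n)$. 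Within your scheme that first jump is genuinely needed. You discard the contribution of $D^2\tilde u\cdot\zeta$ to the key term as negligible. You also implicitly treat $\delta\frac{2\gamma_1+\gamma_2\gamma_2'}{\nu}\sym(\nabla(a^2)\otimes\eta)$ as small. Both have size $\delta\lambda_0/\nu$, which is only $O(\delta K^{-J})$ if $\nu\ge K^J\lambda_0$.

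The missing observation is that family $1$ needs no jump. For $i=1$ the mixed block in Lemma~\ref{l:algebraicdecomp} is empty, so every symmetric field $M$ splits as $\alpha(M)\odot\eta_{1j}+\pi_l(M)$ with $\pi_l(M)\in\mathcal V_2$. Hence every oscillatory error term of Proposition~\ref{p:step} in family $1$ can be integrated by parts. This includes the whole $Du_{j-1}^TD\zeta$ term, with no need for its finer structure. One applies Proposition~\ref{p:IBP} with $\xi=\eta=\eta_{1j}$, $A=\alpha(M)$ independent of $t$, and $\lambda=\mu\sim\nu_{1,j-1}$, and no $\mathcal F_m$ is produced. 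This is what permits $\nu_{1,j}=K^j\lambda_0$, as in Proposition~\ref{p:substage1}. Only with this choice is the non-mean-zero term $\frac{2\pi}{\nu^2}\nabla a\otimes\nabla a$ of non-negligible size $K^{-2j}$. That is the real reason for applying Lemma~\ref{l:babykaellen} with $\lambda_l=K^l\lambda_0$; with your frequencies the term is already $O(K^{-2J})$, so your remark about the Kaellen term does no work. The count then becomes $K^{n}$ for family $1$ and $K^{J+n-i}$ for family $i\ge 2$, which is at most $K^{J(n-1)+n^2}$.

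There are also a few smaller corrections. The lower-block remainders are $\delta\frac{\nu'}{\nu}\mathcal F_l=O(\delta K^{-1})$, not $O(\delta)$. Positivity of $a_{kl}^2-L_{kl}(\mathcal F)$ comes from taking $K$ large together with $a_{kl}\ge r_K$, not from $r,\delta$ small, which would not help. These remainders are absorbed exactly, without mollification; mollifying them at a coarser scale would cost errors of order $\delta K^{-1}\gg\delta K^{-J}$. Finally, $H$ must be built from $g_\ell$ to be smooth. Its closeness to $H_0$ comes from $\|H-H_0\|_0\le 2r+C\ell^2\lambda^2$, made small by the constant in $\ell$, and the $\lambda^{-2}$ in \eqref{e:metricdeficit} enters only through $\|g-g_\ell\|_0$.
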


We prove Proposition \ref{p:stage} by following the procedure outlined in the introduction. First, we decompose a re-scaled and mollified version of the metric defect using the decomposition of Lemma \ref{l:babykaellen}. We then iteratively add perturbations in the form \eqref{d:perturbation} to add the corresponding primitive metrics. Within each subfamily $\{\eta_{ij}\otimes \eta_{ij}\}_{j\ge i}$ of primitive metrics, we use the iterative integration by parts mechanism to choose suitable corrector vector fields $w$, producing sufficiently small errors upto remainders that lie in $\mathcal V_{i+1}$ and which can later be canceled exactly.  

Since the process of adding primitive metrics differs slightly for the first subfamily ($i=1$) and the subsequent ones ($i\ge 2$), (due to the additional errors $\mathcal F_m$ arising in the iteration by parts for $i\geq 2$),  it is convenient to divide the argument into two substages. These substages, which respectively handle the addition of all primitive metrics of family 1 and of families $i\ge 2$, are stated and proven in the following subsection. Proposition \ref{p:stage} is then obtained by iterating over these substages.

\subsection{Substages}

In both substages we fix a large natural number $N^0$ resp. $N^{i-1}\in \N$, which  denotes the number of derivatives we control. We will in particular assume 
\begin{equation}\label{d:N}
    N^0,N^{i-1}\geq 2n.
\end{equation}
Moreover, we let $\rho \geq 1 $ and assume that $u\in  C^\infty(\bar\Omega,\R^{n+1})$  satisfies $(P_\rho)$. For the fixed $i\in\{1,\ldots,n\}$ we assume moreover that $a_i,\ldots,a_n\in C^\infty(\bar\Omega,\R^{n+1})$ satisfy
\begin{equation}\label{a:subst}
    0\leq a_j\leq \rho \qquad \text{ for all }j=i,\ldots,n\,.
\end{equation}

\begin{proposition}[Substage 1]\label{p:substage1} Let $N^0\in \N$. 
   There exist $K_*(n,N^0,\rho)\geq 1$, $0<\delta_*(n,N^0,\rho)<1$ such that the following holds. Assume $u, a_j$ satisfy the estimates 
    \begin{align}\label{a:subst1}
        &[u]_{k+1} \leq \sdel \lambda^k,
        &[a_j]_k\leq \lambda^k 
    \end{align}
     for all $k=1,\ldots, N^0+1$ and $j=1,\ldots,n$ for some constants $\lambda \geq 1$ and $0<\delta \leq \delta_*$ with $\lambda\sdel\geq 1$. Then for all $K\geq K_*$ and all natural $J\leq N^0/n-1$, there exists $v \in C^\infty(\bar\Omega,\R^{n+1})$ such that 
    \begin{equation}\label{e:subst1induced}
        Dv^TDv - \left( Du^T Du + \delta \sum_{l=1}^n a_l^2 \eta_{1l}\otimes\eta_{1l} + \delta \sum_{l=1}^n\frac{2\pi}{(\lambda K^l)^2} \nabla a_l\otimes\nabla a_l \right) = \delta\mathcal F_1 +\delta \mathcal R_1\,,
    \end{equation}
    for $\mathcal F_1, \mathcal R_1 \in C^\infty(\bar \Omega,\Sym_n)$ such that 
     \begin{align}
        & \|\mathcal R_1 \|_0\leq C\left(K^{-J}+\sdel\right)\,\label{e:subst1R_1}\\
   & \mathcal F_1\in \mathcal V_2\,,\qquad  \|\mathcal F_1 \|_0 \leq  CK^{-1}\,,\qquad [\mathcal F_1 ]_k \leq C \left(\lambda K^n\right)^k \quad \text{ for  } k=1,\ldots, N^1+1, \label{e:subst1F_1}
   \end{align}
    and
\begin{equation}\label{e:subst1v-u}
        \|v-u\|_0 \leq C\sdel \lambda^{-1},\qquad [v-u]_{k+1}\leq C\sdel\left(\lambda K^n\right)^k \quad \text{ for }k=0,\ldots, N^1+1,
    \end{equation} for 
    \begin{equation}\label{d:N^1}
        N^1 = N^0-n(J+1).
    \end{equation}
    The constant $C$ in \eqref{e:subst1R_1}--\eqref{e:subst1v-u}  only depends on $N^0,n$ and $\rho$. 
\end{proposition}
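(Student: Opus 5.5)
We add the $n$ primitive metrics of the first family one at a time, using the step perturbation of Proposition~\ref{p:step}. Set $u_0=u$, frequencies $\nu_l=\lambda K^l$ for $l=1,\dots,n$, and define
\[
u_l = u_{l-1} + \delta T_{l-1}\Big(\frac{a_l^2\gamma_1(\nu_l x\cdot\eta_{1l})}{\nu_l}\eta_{1l} + w_l\Big) + \sdel\frac{a_l\gamma_2(\nu_l x\cdot\eta_{1l})}{\nu_l}\zeta_{l-1}.
\]
Here $T_{l-1},\zeta_{l-1}$ are the tangential map and normal field of $u_{l-1}$ (Lemmas~\ref{l:normals} and \ref{l:tangential}), and $v=u_n$.

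\textbf{Bookkeeping.} By induction on $l$ we carry two properties.
\begin{itemize}
\item $u_{l}$ satisfies $(P_{2\rho})$. This follows from the closeness \eqref{e:stepv-u}, $\|Du_l-Du\|_0\le C\sdel$, for $\delta\le\delta_*$.
\item $[u_l]_{k+1}\le C\sdel \nu_l^k$ for $k\le N^0-l(J+1)+1$. This follows from \eqref{e:stepv-u} and the assumptions.
\end{itemize}
So at step $l$ the hypotheses \eqref{a:stepa}--\eqref{a:stepw} hold with $\lambda$, $\mu=\nu_{l-1}$ and $\nu=\nu_l$, after dividing by harmless constants. The loss of $J+1$ derivatives per step gives $N^1=N^0-n(J+1)$.

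\textbf{Induced metric at step $l$.} By \eqref{e:inducedmetric}, the metric increases by $\delta a_l^2\eta_{1l}\otimes\eta_{1l}+\delta\frac{2\pi}{\nu_l^2}\nabla a_l\otimes\nabla a_l+2\delta\sym(Dw_l)$ plus three kinds of terms.

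1. A term $\delta^{3/2}\mathcal R$. It is bounded by $C\delta^{3/2}$ and goes into $\mathcal R_1$.

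2. A term $\delta\frac{\gamma_2^2-2\pi}{\nu_l^2}\nabla a_l\otimes\nabla a_l$. It has size $\delta K^{-2l}$. We treat it by Proposition~\ref{p:IBP} (see the next paragraph) or simply put it in $\mathcal R_1$ since $K^{-2}\le K^{-1}$.

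3. The genuinely oscillatory terms
\[
2\sdel\frac{a_l\gamma_2}{\nu_l}\sym(Du_{l-1}^TD\zeta_{l-1})\quad\text{and}\quad \delta\frac{2\gamma_1+\gamma_2\gamma_2'}{\nu_l}\sym(\nabla(a_l^2)\otimes\eta_{1l}).
\]
Both have zero-mean oscillatory factors at frequency $\nu_l$. Since $|Du_{l-1}^TD\zeta_{l-1}|\le C\sdel\nu_{l-1}$, they are of size $\delta K^{-1}$ and $\delta K^{-l}$ respectively. Their matrix coefficients oscillate at frequency $\le\nu_{l-1}$ with derivatives controlled by $\nu_{l-1}$.

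\textbf{Applying Proposition~\ref{p:IBP} with $i=1$.} Take $\xi=\eta_{1l}$, so $\xi_1\ne0$, and take $\mu=\nu_{l-1}$, with the roles of $\lambda,\mu$ in \eqref{a:A2} both equal to $\nu_{l-1}$. Every symmetric matrix can be written as $A\odot\eta$ with $\eta=\xi$, for example $M=\alpha(M)\odot\xi+\pi_l(M)$, because for $i=1$ the mixed block is trivial. Hence $\mathcal F^J_m=0$, and the $x$-dependence needs no $t$-structure: one sets $A(t,x)=A(x)$, with the estimates in $x$ at scale $\nu_{l-1}$.

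We apply Proposition~\ref{p:IBP} to each oscillatory term divided by $\delta$. We choose $w_l$ as $-1/2$ times the sum of the resulting $w^J$'s, rescaled by the amplitudes $\sdel\nu_{l-1}/\nu_l$ etc. These amplitudes are $\le 1$, so \eqref{a:stepw} holds. Each oscillatory term is then replaced by two pieces:
\begin{itemize}
\item a term $(\nu_{l-1}/\nu_l)^J\cdot O(1)=O(K^{-J})$, which goes into $\mathcal R_1$;
\item a remainder $\mathcal F_l^J\in\mathcal V_2$, of size $\le CK^{-1}$ after including the prefactor, with $[\cdot]_k\le C\nu_l^k$.
\end{itemize}
Summing over $l$ gives $\mathcal F_1\in\mathcal V_2$ with \eqref{e:subst1F_1} (note $\nu_l\le\lambda K^n$), and gives $\mathcal R_1$ with \eqref{e:subst1R_1}.

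Finally, \eqref{e:subst1v-u} follows by summing \eqref{e:stepv-u} over $l$, using $\sdel\nu_l^{-1}\le\sdel\lambda^{-1}$ for the $C^0$ bound.

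\textbf{Main obstacle.} The delicate point is to verify that the oscillatory coefficients $Du_{l-1}^TD\zeta_{l-1}$ satisfy derivative bounds of the form \eqref{a:A2} uniformly, with constants independent of $K$, once normalized by $\sdel\nu_{l-1}$. This is needed so that the constant of Proposition~\ref{p:IBP} does not grow. It should follow from the inductive $C^{k+1}$ bounds via Lemma~\ref{l:normals} and the Leibniz rule.

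A second check is that the order of quantifiers allows $K_*$ and $\delta_*$ to absorb all constants, including keeping $(P_{2\rho})$ along the $n$ steps.
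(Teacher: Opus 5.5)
Your proposal follows the paper's proof: the same step perturbations with $\nu_l=\lambda K^l$, the error terms written as $\delta\frac{\mu}{\nu_l}\gamma(\nu_l x\cdot\eta_{1l})M$ with $[M]_k\le C\mu^k$ and $\mu\sim\nu_{l-1}$, the splitting $M=\alpha(M)\odot\eta_{1l}+\pi_l(M)$ (no mixed block since $i=1$), and Proposition~\ref{p:IBP} with $\xi=\eta=\eta_{1l}$, $\lambda=\mu$ and $A(t,x)=\alpha(M(x))$. Two slips need fixing. First, the option of putting $\delta\frac{\gamma_2^2-2\pi}{\nu_l^2}\nabla a_l\otimes\nabla a_l$ into $\mathcal R_1$ fails: for $l=1$ this term has size $\delta K^{-2}$, which is not $O(\delta(K^{-J}+\sdel))$ once $J\geq 3$, and it does not lie in $\mathcal V_2$, so it must be integrated by parts as the paper does. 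Second, the corrector must be $w_l=-\frac{\mu}{\nu_l}\sum w^J$: the amplitude is $\mu/\nu_l\sim K^{-1}$ with no factor $\sdel$, and the factor is $-1$ rather than $-\tfrac12$, since both \eqref{e:inducedmetric} and \eqref{e:IBPdecomp} carry $2\,\sym(D\,\cdot)$. It is the smallness $K^{-1}$, not merely amplitude $\le 1$, that absorbs the constant in \eqref{e:westimates} and yields \eqref{a:stepw}.
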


 The proof proceeds by iteratively adding perturbations to add the primitive metric $a_j^2\eta_{1j}\otimes\eta_{1j}$. These perturbations are added with frequencies $\nu_1= K\lambda$, $\nu_j = K\nu_{j-1}$ for $j=2,\ldots,n$ (compare with  $\nu$ in \eqref{e:subst1_parameters}). By using integration by parts we will reduce the size of the resulting error  to $\delta K^{-J}$, up to the remainder $\mathcal F_1 $, which, as $i=1$, lies entirely in $\mathcal V_2$  and can therefore be canceled exactly later. Note the loss of  information on $n(J+1)$ derivatives of the maps, which is due to the integration by parts (compare with \eqref{e:westimates}).
    
\begin{proof}[Proof of Proposition \ref{p:substage1}]
We claim that if $K_*$ is large enough and $\delta_*$ is small enough, we can iteratively for $j=1,\ldots,n$ choose a vector-field $w_j$ and suitable parameters such that $u_j$ defined by \eqref{d:perturbation} with 
\[ u=u_{j-1},\quad a=a_j,\quad \nu=\nu_j=\lambda K^j,\quad \eta=\eta_j,\quad w=w_j\]
satisfies 
  \begin{equation}\label{e:subst1_iterass}
      Du_j^TDu_j - \left( Du^T Du + \delta \sum_{l=1}^j a_l^2 \eta_{1l}\otimes\eta_{1l} + \delta \sum_{l=1}^j\frac{2\pi}{(\lambda K^l)^2} \nabla a_l\otimes\nabla a_l \right) =\delta \mathcal F_1^j +\delta\mathcal R_1^j\,,
  \end{equation}
 for $\mathcal R_1^j$ and $\mathcal F_1^j$ satisfying \eqref{e:subst1R_1} and \eqref{e:subst1F_1} with $j$ replacing $n$, and moreover 
 \begin{equation}\label{e:susbst1_iterassv-u}
     \|u_j-u\|_0\leq C\sdel\lambda^{-1},\qquad [u_j-u]_{k+1}\leq C\sdel (K^j\lambda)^k\,\quad\text{ for } k=0,\ldots,N^0-j(J+1)+1.
 \end{equation}

 Clearly the map $v:= u_n $ then satisfies \eqref{e:subst1induced} and \eqref{e:subst1v-u}, hence it suffices to prove the claim to conclude the proof.

We prove the claim by  a finite induction. Let therefore $j=1$. Define $u_1$ by \eqref{d:perturbation} with 
\[  
   u=u,\quad a= a_1, \quad \eta =\eta_{11},\quad \nu= \nu_1:= K\lambda,
\] and $w=w_1$ to be chosen. It then follows from \eqref{e:inducedmetric} that 
  \begin{align}\label{e:subst1_iterative}& Du_1^TDu_1 -\left(Du^TDu +\delta a_1^2 \eta_{11}\otimes \eta_{11} + \frac{2\pi}{(\lambda K)^2}\nabla a_1\otimes \nabla a_1 \right)   =  \delta^{\sfrac{3}{2}} \mathcal R +2\delta\,\sym(Dw_1)\nonumber\\
         &\hspace{1cm }  + 2\delta^{\sfrac{1}{2}}\frac{a_1\gamma_2}{\nu }\sym\left( Du^T D\zeta\right) +2\delta \frac{2\gamma_1+\gamma_2\gamma_2'}{\nu }\sym\left( \nabla(a_1^2)\otimes \eta_{11}\right) +\delta \frac{\gamma_2^2-2\pi}{\nu^2}\nabla a_1\otimes \nabla a_1.
         \end{align}
         We now want to define $w_1$ as a sum of three corrector vector-fields, each one designed to cancel one of the error terms on the second line with the help of the Proposition \ref{p:IBP}.
Observe that all of these terms are of the form \[\delta\frac{\lambda}{\nu}\gamma(\nu x\cdot \eta) M\,, \]
for a  $\gamma\in C^\infty(\S^1)$ with mean zero and a symmetric matrix field $M\in C^\infty(\bar \Omega,\Sym_n)$ such that for all $k=0,\ldots,N^0$
\[ \|M\|_k\leq C \lambda^k\,\]
for some constant depending only on $n,N^0$ and $\rho$.
Indeed, 
the assumptions \eqref{a:subst}, \eqref{a:subst1} and the estimates \eqref{e:normalestimates} yield
\begin{align*}
    \left[ \frac{a_1}{\sdel \lambda }\sym\left(Du^TD\zeta\right)\right]_k &\leq C\frac{1}{\sdel\lambda }\left( [a_1]_k[u]_2 + [u]_{k+1}[u]_2+[u]_{k+2}\right) \\&
    \leq  C\frac{1}{\sdel\lambda }\left( \sdel \lambda^{k+1}+ \delta \lambda^{k+1}\right)\\
    &\leq C\lambda^k
\end{align*}
 and similarly 
 \begin{align*}
     \left[ \frac{1}{\lambda}\sym\left( \nabla(a_1^2)\otimes \eta_{11}\right) \right]_k \leq C\lambda^{-1}[a_1]_{k+1}\leq C \lambda^k,
 \end{align*}
 and 
  \begin{align*}
     \left[ \frac{1}{\lambda\nu}\nabla a_1\otimes \nabla a_1 \right]_k \leq C\frac{1}{\lambda\nu}[a_1]_1[a_1]_{k+1}\leq C\frac{1}{\lambda\nu} \lambda^{k+2}\leq C\lambda^k\,
 \end{align*}
 in view of $\nu = K \lambda\geq \lambda$.  
 
For any such term, by Lemma \ref{l:algebraicdecomp} with $\xi =\eta_{11}$ we can write 
 \[M = \alpha(M)\odot \eta_{11} + \pi_l(M)\]
 for a remainder $\pi_l(M)\in \mathcal V_2$. By the linearity of $\alpha$ it holds 
 \[[\alpha(M)]_k\leq C_A\lambda^k\quad \text{ for all } k= 0,\ldots,N^0\]
 for a constant $C_A$ only depending  on $n,N^0$ and $\rho$.
Thus, we can  apply Proposition \ref{p:IBP} with $N_I=N^0$, $\xi =\eta=\eta_{11}$, $\mu=\lambda=\lambda$, $\nu=K\lambda$ and $A(t,x)= \alpha(M(x))$. This yields vector-fields $w^J, A^J$ and a remainder $\mathcal F^J_l$ such that 
\[\gamma \alpha(M)\odot \eta = 2 \sym(Dw^J) + \left(\tfrac{\lambda}{\nu}\right)^J\gamma^JA^J\odot\eta_{11} + \mathcal F^J_l\]
 and \eqref{e:lb} and \eqref{a:A2j}--\eqref{e:westimates} are satisfied. In particular, each term on the second line of \eqref{e:subst1_iterative} can be written as 
 \[\delta\frac{\lambda}{\nu}\gamma(\nu x\cdot \eta) M = 2\delta\frac{\lambda}{\nu}\sym(Dw^J) + \delta \left(\tfrac{\lambda}{\nu}\right)^{J+1}\gamma^J A^J\odot\eta_{11} + \delta\frac{\lambda}{\nu}\left( \mathcal F^J_l + \pi_l(M)\right)\,.   \]
 Now observe that 
 \[\left[\tfrac{\lambda}{\nu}w^J\right]_k\leq CK^{-1}\nu^{k-1}\leq\frac13 \nu^{k-1}\qquad \text{ for } k=0,\ldots, N^0-J +1\]
 if $K\geq K_*(n,N^0)$ is large enough, where we used $\nu= K \lambda$.  Moreover, from \eqref{e:F_lestimates} we find 
 \begin{align*} \|\frac{\lambda}{\nu}\left( \mathcal F^J_l + \pi_l(M)\right)\|_0 &\leq CK^{-1}, \\
 \left[\frac{\lambda}{\nu}\left( \mathcal F^J_l + \pi_l(M)\right)\right]_k&\leq C\nu^k\qquad\text{ for }k=0,\ldots,N^0-J \end{align*}
 Define then $w_1$ to be the sum of the vector-fields $-\tfrac{\lambda}{\nu}w^J$ corresponding to each of the three error terms on the second line of \eqref{e:subst1_iterative},  $\mathcal F_1^1$ to be the sum of the three $\tfrac{\lambda}{\nu}\left(\mathcal F^J_l + \pi_l(M)\right)$ terms,  and $\mathcal R_1^1$ to be the sum of the three $\left(\tfrac{\lambda}{\nu}\right)^{J+1}\gamma^J A^J\odot\eta_{11}$ terms  and $\sdel \mathcal R$ (recall the small remainder $\delta^{\sfrac{3}{2}}\mathcal R$ in \eqref{e:subst1_iterative}). The map $w_1$ satisfies \eqref{a:stepw}, so that by \eqref{e:steprest}, $\mathcal R$ is bounded by a constant only depending on $n,N^0$ and $\rho$, and \eqref{e:stepv-u} holds. This yields \eqref{e:subst1_iterass} for $j=1$ in view of $\nu= K \lambda$ and $\|A^J\|_0\leq C$. The estimates \eqref{e:susbst1_iterassv-u} for $j=1$ follow at once from \eqref{e:stepv-u} and $\nu\geq \lambda$.

 Now assume the claim holds up to some $j-1\leq n-1$. Observe that from \eqref{e:susbst1_iterassv-u} it follows that $u_{j-1}$ satisfies $(P_{2\rho})$ if $\delta_*$ is small enough (depending on $n,N^0$ and $\rho$). 

We then define $u_j$ by \eqref{d:perturbation} with 
 \begin{equation}\label{e:subst1_parameters}
u= u_{j-1}, \quad a=a_{j},\quad \nu =\nu_j:=\lambda K^{j}, \quad \eta = \eta_{1j}
\end{equation}
 and $w=w_{j}$ to be chosen. By \eqref{e:inducedmetric} it holds

   \begin{align}\label{e:subst1_iterative2}
   & Du_{j}^TDu_{j} -\left(Du_{j-1}^TDu_{j-1} +\delta a_{j}^2 \eta_{1j}\otimes \eta_{1j} + \frac{2\pi}{(\lambda K^j)^2}\nabla a_j\otimes \nabla a_j \right)   =  \delta^{\sfrac{3}{2}} \mathcal R +2\delta\,\sym(Dw)\nonumber\\
         &\hspace{1cm }  + 2\delta^{\sfrac{1}{2}}\frac{a_j\gamma_2}{\nu }\sym\left( Du^T D\zeta\right) +2\delta \frac{2\gamma_1+\gamma_2\gamma_2'}{\nu }\sym\left( \nabla(a_j^2)\otimes \eta_{11}\right) +\delta \frac{\gamma_2^2-2\pi}{\nu^2}\nabla a_j\otimes \nabla a_j.
         \end{align}
Notice that by \eqref{e:susbst1_iterassv-u}, 
 \begin{equation}
     [u_{j-1}]_{k+1}\leq C\sdel(\lambda K^{j-1})^k\leq \sdel\mu_j^k\, 
 \end{equation} 
for all $k=1,\ldots, N^0-(j-1)(J+1)+1$, if we set
\[\mu_j = C_\star \lambda K^{j-1}\] 
for some constant $C_\star $ depending only on $n,N^0$ and $\rho$. Assume now that 
\[K \geq C_\star\,.\]
This ensure $\lambda\leq \mu_j\leq \nu_j$. We can then infer that all of the terms on the second line of \eqref{e:subst1_iterative2} are of the form 
\[\delta\frac{\mu_j}{\nu_j}\gamma(\nu_j x\cdot\eta) M\]
for a smooth symmetric matrix field $M$ satisfying 
\[[M]_k\leq C \mu^k_j\,,\]
for all $k=0,\ldots,N^0-(j-1)(J+1)$,
using the same reasoning as above and $\lambda \leq\mu_j\leq \nu_j$. Analogously to the $j=1$ case, we can then decompose these error terms with the help of Lemma 2.3 and apply the iterative integration by parts Proposition \ref{p:IBP} with 
\[\zeta=\eta=\eta_{1j},\quad \lambda=\mu = \mu_j,\quad\nu = \nu_j,\quad N_I=N^0-(j-1)(J+1),\quad A(t,x) =\alpha(M(x)).\]This yields the decomposition
 \[\delta\frac{\mu_j}{\nu_j}\gamma(\nu_j x\cdot \eta) M = 2\delta\frac{\mu_j}{\nu_j}\sym(Dw^J) + \delta\left(\tfrac{\mu_j}{\nu_j}\right)^{J+1}\gamma^J A^J\odot\eta_{11} + \delta\frac{\mu_j}{\nu_j}\left( \mathcal F^J_l + \pi_l(M)\right)\,   \]
 for each of these terms. Gathering the corresponding vector-fields $-\tfrac{\mu_j}{\nu_j}w^J$ defines $w_j$, and the estimate 
 \[ [w_j]_k\leq \nu_j^{k-1}\quad\text{ for } k=0,\ldots, N_I-J+1 = N^0-j(J+1)+2\]
 follows as above. In particular, the term $\mathcal R$ in \eqref{e:subst1_iterative2} is bounded by a constant only depending on $n,N^0$ and $\rho$, as is $A^J $ by \eqref{a:A2j}. Moreover 
 \[\|\left(\tfrac{\mu_j}{\nu_j}\right)^{J+1}\gamma^J A^J\odot\eta_{11}\|_0\leq C C_\star^{J+1} K^{-(J+1)}\leq C K^{-J}, \] 
 for a constant $C$ only depending on $n,N^0$ and $\rho$.
 Hence, adding these three terms to $\delta^{\sfrac{1}{2}}\mathcal R$ and $\mathcal  R_1^{j-1}$ constitutes $\mathcal R_1^{j}$. On the other hand, adding the three $\frac{\mu_j}{\nu_j}\left( \mathcal F^J_l + \pi_l(M)\right)$-terms to $\mathcal F_l^{j-1}$ defines $\mathcal F_1^j$. The estimates \eqref{e:subst1F_1} follow from  \eqref{e:F_lestimates} and the induction assumption,  and \eqref{e:susbst1_iterassv-u} follows from the induction assumption and \eqref{e:stepv-u}. This concludes the proof.  
\end{proof}

For the second substage, corresponding to families $i\geq 2$, the iteration by parts procedure behaves slightly differently. Unlike the first family, the mixed-block error $\mathcal F_m$
  produced in the integration by parts step is not automatically contained in $\mathcal V_{i+1}$. Consequently, these errors cannot be canceled immediately and must be made sufficiently small by increasing the initial frequency of the perturbation, reflected in the choice $\nu_i\geq K^J\lambda$. After this initial frequency boost, the remaining perturbations in the subfamily can be added with the standard geometric progression $\nu_j = K\nu_{j-1}$ for $j\geq i$, following the heuristic explanation in Subsection \ref{ss:intronovelties}. Recall that $u$ is assumed to satisfy $(P_\rho)$, and $a_j$ \eqref{a:subst}.
 
\begin{proposition}[Substage 2]\label{p:substage2} Let $i\in\{2,\ldots,n\}$ and let $N^{i-1}\in \N$.
   There exists $K_*(n,N^{i-1},\rho)\geq 1$ and $0<\delta_*(n,N^0,\rho)<1$ such that the following holds. Assume $u, a_j$ satisfy the estimates 
    \begin{equation}\label{a:subst2}
        [u]_{k+1} \leq \sdel \lambda^k,
        \qquad[a_j]_k\leq \lambda^k 
    \end{equation}
     for all $k=1,\ldots, N^{i-1}+1$ and $j=i,\ldots,n$ for some constants $\lambda \geq 1$ and $0<\delta \leq \delta_*$ with $\lambda\sdel\geq 1$. Then for all $K\geq K_*$ and all natural $n\leq J\leq N^{i-1}-n$, there exists $v \in C^\infty(\bar\Omega,\R^{n+1})$ such that 
    \begin{equation}\label{e:subst2induced}
        Dv^TDv - \left( Du^T Du + \delta \sum_{j=i}^n a_j^2 \eta_{ij}\otimes\eta_{ij} \right) = \delta \mathcal F_i +\delta \mathcal R_i\,,
    \end{equation}
    for $\mathcal F_i, \mathcal R_i \in C^\infty(\bar \Omega,\Sym_n)$ such that 
    \begin{align}
        &\|\mathcal R_i \|_0\leq C\left(K^{-J}+\sdel\right)\,\label{e:subst2R_i}\\
        &\mathcal F_i\in \mathcal V_{i+1}\,,\quad  \|\mathcal F_i \|_0 \leq CK^{-1}\,,\quad [\mathcal F_i ]_k \leq C \left(\lambda K^{J+n-i}\right)^k \quad \text{ for } k=1,\ldots, N^{i-1}-(1-\delta_{in})J,\label{e:subst2F_i}
   \end{align}
    and 
\begin{equation}\label{e:subst2v-u}
       \|v-u\|_0\leq C\sdel \lambda^{-1},\quad
       [v-u]_{k+1}\leq C\sdel\left(\lambda K^{J+n-i}\right)^k \quad \text{ for }k=0,\ldots, N^i+1
    \end{equation}
    for 
    \begin{equation}\label{d:N^i}
        N^i = \begin{cases}
            N^{i-1}-1  \quad \text{ if } i=n\\
            N^{i-1 }- J-(n-i) \quad \text{ for } 2\leq i \leq n-1
        \end{cases}
    \end{equation}
     The constant $C$ in \eqref{e:subst2F_i}--\eqref{e:subst2v-u}  only depends on $N,n$ and $\rho$. 
\end{proposition}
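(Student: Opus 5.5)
We follow the structure of the proof of Proposition~\ref{p:substage1}. By finite induction on $j=i,\ldots,n$ we construct maps $u_{i-1}:=u$ and $u_j$ via \eqref{d:perturbation}, with
\[u=u_{j-1},\quad a=a_j,\quad \eta=\eta_{ij},\quad \nu=\nu_j:=\lambda K^{J+j-i},\quad w=w_j.\]
The corrector $w_j$ is still to be chosen. The inductive claim is the analogue of \eqref{e:subst1_iterass}--\eqref{e:susbst1_iterassv-u}: after step $j$ the induced metric has gained $\delta\sum_{l=i}^j a_l^2\eta_{il}\otimes\eta_{il}$, up to $\delta\mathcal F_i^j+\delta\mathcal R_i^j$, with $\mathcal F_i^j\in\mathcal V_{i+1}$. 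The term $\delta\frac{2\pi}{\nu_j^2}\nabla a_j\otimes\nabla a_j$ of \eqref{e:inducedmetric} no longer needs to be kept separately: since $\nu_j\geq K^J\lambda$, it is of size $\delta K^{-2J}$ and can be put into $\mathcal R_i^j$. Throughout, $(P_{2\rho})$ for $u_{j-1}$ follows from the $C^1$ bound and $\delta\le\delta_*$, exactly as in substage 1.

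In the second line of \eqref{e:inducedmetric}, two of the error terms involve only $\nabla a_j$, namely
\[\delta\frac{2\gamma_1+\gamma_2\gamma_2'}{\nu_j}\sym(\nabla(a_j^2)\otimes\eta_{ij})\qquad\text{and}\qquad \delta\frac{\gamma_2^2-2\pi}{\nu_j^2}\nabla a_j\otimes\nabla a_j.\]
They are of size $\delta\lambda/\nu_j\le\delta K^{-J}$ and go straight into $\mathcal R_i^j$. The delicate term is $2\sdel\frac{a_j\gamma_2}{\nu_j}\sym(Du_{j-1}^TD\zeta)$, which is of size $\delta\mu_j/\nu_j$ with $\mu_j\sim\lambda K^{J+j-1-i}$ for $j>i$. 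To handle it, we expand $D^2u_{j-1}$ using \eqref{e:sharpD2} iteratively over $l=i,\ldots,j-1$. This gives
\[\partial^2_{kl}u_{j-1}=\partial^2_{kl}u+\sum_{l'=i}^{j-1}\sdel\nu_{l'}a_{l'}\gamma_2''(\nu_{l'}x\cdot\eta_{il'})\zeta_{l'}(\eta_{il'}\otimes\eta_{il'})_{kl}+O(\sdel(\lambda+\sdel\nu_{j-1})).\]
The $O(\cdot)$ part contributes $\delta(\lambda/\nu_j+\sdel)$ to $\mathcal R$, and the $D^2u$ part contributes $\delta\lambda/\nu_j$. Writing $\sym(Du^TD\zeta)$ via $-\zeta\cdot D^2 u$ (second fundamental form), the $l'$-th leading term has the form
\[\delta\frac{\nu_{l'}}{\nu_j}\gamma_2(\nu_jx\cdot\eta_{ij})\,A(\nu_{l'}x\cdot\eta_{il'},x)\odot\eta_{il'},\qquad A(t,x)=c\,a_ja_{l'}\gamma_2''(t)(\zeta\cdot\zeta_{l'})\eta_{il'}.\]
Here $A_k=0$ for $k<i$, because $(\eta_{il'})_k=0$ for $k<i$. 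The coefficients $\zeta,\zeta_{l'}$ oscillate at frequency $\le\mu_j$, not at $\lambda$, so they cannot simply be absorbed into the slow part of $A$ in Proposition~\ref{p:IBP}. Our first approach is to replace $\zeta,\zeta_{l'}$ by the normal $\zeta_u$ of the initial map, using \eqref{e:normalcloseness}; the difference is $O(\sdel)$ times $\nu_{l'}/\nu_j\le1$ and is absorbed into $\mathcal R$. With this replacement $A$ obeys \eqref{a:A2} with slow frequency $\lambda$.

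We then apply Proposition~\ref{p:IBP} with $\xi=\eta_{ij}$, $\eta=\eta_{il'}$, slow frequency $\lambda$, $\mu=\nu_{l'}$, $\nu=\nu_j$ and $N_I=N^{i-1}$, and set $w_j=-\sum_{l'}\frac{\nu_{l'}}{\nu_j}w^J$. In the resulting decomposition the pieces are handled as follows.
\begin{itemize}
\item The main error is $(\nu_{l'}/\nu_j)^{J+1}\le K^{-J-1}$ when $l'<j$. The case $l'=j$ does not occur, since the sum runs only over $l'\le j-1$; the self-interaction with $\eta_{ij}$ itself is covered by the fact that $u_{j-1}$ does not yet contain oscillation $j$.
\item The mixed error $\mathcal F_m$ costs $\delta\frac{\nu_{l'}}{\nu_j}\frac{\lambda}{\nu_j}\le\delta K^{-J}$. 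This is exactly why the first frequency is boosted to $\nu_i=K^J\lambda$.
\item The lower-block remainders $\frac{\nu_{l'}}{\nu_j}\mathcal F_l$ and $\pi_l$ lie in $\mathcal V_{i+1}$, have size $\le CK^{-1}$, and have $C^k$ norms $\le C\nu_j^k$. They are added to $\mathcal F_i^j$.
\end{itemize}
For $j=i$ the sum over $l'$ is empty, so only the $D^2u$-type terms appear. These are handled by Lemma~\ref{l:algebraicdecomp} together with Proposition~\ref{p:IBP} with $\mu=\lambda$, or estimated directly, since $\lambda/\nu_i=K^{-J}$. The bound \eqref{e:westimates} ensures that $w_j$ satisfies \eqref{a:stepw} once $K\ge K_*$, and then \eqref{e:stepv-u} yields \eqref{e:subst2v-u} with the stated derivative losses, which fix $N^i$.

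We expect the main obstacle to be the bookkeeping of $D^2u_{j-1}$. In particular we have to check that \eqref{e:sharpD2}, combined with the replacement of the normals, leaves only errors of size $\delta(K^{-J}+\sdel)$. We also need the accumulated lower-block remainders from earlier steps $l'<j$ to stay in $\mathcal V_{i+1}$ and not re-enter the $(i,\cdot)$ row. If the normal replacement fails, the fallback is to redo \eqref{e:sharpD2} with the $\zeta$-factor paired in the second fundamental form: terms of the form $\zeta_{l'}\cdot\zeta_{l''}$ are $1+O(\sdel)$, so the same estimate should go through.
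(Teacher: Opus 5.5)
Your proposal is correct and follows essentially the same route as the paper: the same frequencies $\nu_j=\lambda K^{J+j-i}$, the same direct estimate for $j=i$ (the paper simply takes $w=0$), and the expansion of $D^2u_{j-1}$ via \eqref{e:sharpD2}. From there you isolate the terms $\delta\frac{\nu_{l'}}{\nu_j}\gamma_2(\nu_jx\cdot\eta_{ij})A(\nu_{l'}x\cdot\eta_{il'},x)\odot\eta_{il'}$ and apply Proposition~\ref{p:IBP} with slow frequency $\lambda$, $\mu=\nu_{l'}$, $\nu=\nu_j$, so that the mixed-block error is of size $\lambda/\nu_j\le K^{-J}$. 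The only cosmetic difference is that the paper replaces $\zeta_{l'}$ by the current normal $\zeta$ of $u_{j-1}$ and uses $|\zeta|=1$, whereas you replace both normals by $\zeta_u$; both cost only $O(\delta^{3/2})$ and leave an amplitude $A$ that oscillates at frequency $\lambda$.
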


\begin{proof}[Proof of Proposition \ref{p:substage2}]
Set \begin{equation}\label{d:subst2_frequencies}
    \nu_i=K^J\lambda, \quad \nu_j=K\nu_{j-1}= K^{J+j-i}\lambda\,\text{ for }j=i+1,\ldots,n.
\end{equation}
We claim that if $K_\star$ is large enough and $\delta_\star$ small enough,  we can iteratively for $j=i,\ldots,n$ choose a suitable vector-field $w_j$ such that when we define $u_j$ by \eqref{d:perturbation} with 
\begin{equation}\label{e:ss2iterchoices}
    u = u_{j-1},\quad a=a_j,\quad \eta= \eta_{ij},\quad \nu= \nu_j,\quad w=w_j
\end{equation}
   it holds 
    \begin{equation}\label{e:subst2_iterass}
        Du_j^TDu_j- \left( Du^TDu +\delta\sum_{k=i}^j a_k^2\eta_{ik}\otimes\eta_{ik}\right) = \delta \mathcal F_i^j +\delta \mathcal R_i^j,
    \end{equation}
    for $\mathcal F_i^j$ and $\mathcal R_i^j$ satisfying \eqref{e:subst2F_i} and \eqref{e:subst2R_i}, and moreover 
     \begin{equation}\label{e:susbst2_iterassv-u}
     \begin{split}
          \|u_j-u\|_0&\leq C\sdel\lambda^{-1},\\
          [u_j-u]_{k+1}&\leq C\sdel \nu_j^k\,\quad\text{ for } k=0,\ldots,N^{i}_j+1,
     \end{split}
 \end{equation}
 for \begin{equation}
     N^{i}_j = \begin{cases}
         N^{i-1}-1\quad \text{ if }j=i\\
         N^{i-1}- J-(j-i) \quad \text{ if }i+1\leq j\leq n
     \end{cases}.
 \end{equation}
 Clearly, if the claim is true, the map $v=: u_n$ and the fields $\mathcal F_i = \mathcal F_i^n, \mathcal R_i = \mathcal R_i^n$ will satisfy the desired properties. 

 We again proceed by finite induction and let $j=i$. Define $u_i$ by \eqref{d:perturbation} with $w=0$, $a=a_i$, $\eta=\eta_{ii}$ and $\nu=\nu_i=K^J\lambda$. Using the assumption \eqref{a:subst2}, estimates \eqref{e:normalestimates} and \eqref{e:steprest} we find that 
 \begin{align*}
     \|Du_i^TDu_i -\left(Du^TDu + \delta a_i^2\eta_{ii}\otimes\eta_{ii}\right) \|&\leq C\delta \left( \lambda^2\nu^{-2} + \sdel + \delta^{-\sfrac{1}{2}}\nu^{-1}[u]_2+\lambda\nu^{-1}\right)\\
     &\leq C\delta\left(\lambda\nu^{-1}+\sdel\right)\\
     & \leq C\delta( K^{-J} + \sdel).
 \end{align*} 
 This shows \eqref{e:subst2_iterass} after choosing $\mathcal F_i^i =0$. Moreover, since $w=0$ we have 
 \[[u_i-u]_{k+1}\leq C\sdel \nu_1^k\quad \text{ for } k=0,\ldots,N^{i-1}\,,\] by \eqref{e:stepv-u}, which yields \eqref{e:susbst2_iterassv-u} for $j=i$.

If $i=n$, the proof is finished. Hence, assume $i\leq n-1$ and that up to some $j-1$ with $ i\leq j-1\leq n-1$, $u_{j-1}$ has been constructed as above. We now want to choose a suitable vector-field $w_j$ such that if we  define $u_j$ by \eqref{d:perturbation} with 
the choices \eqref{e:ss2iterchoices} it holds \eqref{e:susbst2_iterassv-u}. 

Firstly, it follows from the inductive assumption and \eqref{e:subst2_iterass} that when $\delta_* $ is small enough  (depending on $n,N^{i-1}$ and $\rho$), $u_{j-1}$ satisfies $(P_{2\rho})$, hence the normal vectorfield $\zeta$, and the map $T$ in \eqref{d:perturbation} are well-defined and satisfy the estimates 
\begin{equation}\label{e:subst2_zeta}[T]_k+ [\zeta]_k \leq C[u_{j-1}]_{k+1}\leq C(1+\sdel\nu_{j-1}^k)\quad \text{ for }k=0,\ldots,N^i_{j-1}+1\,,\end{equation}
for constant $C$ only depending on $n,N^{i-1},\rho$. Define therefore $u_j$ by \eqref{d:perturbation} with $w=w_j$ to be chosen. We then have
\begin{lemma}\label{l:inductive} For any $k=i,\ldots,j-1 $, define $A_k\in C^\infty(\S^1\times \bar \Omega,\R^n)$ by 
 \begin{equation}\label{d:A_k}
 A_k(t,x) =  -2a_j(x) a_k(x) \gamma_2''(t)\eta_{ik}\,.
 \end{equation}
     Then $A_k$ satisfies \eqref{a:A1} and \eqref{a:A2} with $N_I = N^{i-1}$. Moreover, it holds 
     \begin{equation}\label{e:subst2_inductiveinduced}
         \begin{split}
               Du_j^TDu_j &=  Du_{j-1}^TDu_{j-1} +\delta a_j^2\eta_{ij}\otimes\eta_{ij} + \delta R\\
          & \qquad + 2\delta \sym(Dw)+ \delta\sum_{k=i}^{j-1} \frac{\nu_k}{\nu_j} \gamma_2(\nu_j x\cdot \eta_{ij}) A_k(\nu_k x\cdot \eta_{ik},x)\odot \eta_{ik} 
         \end{split}
     \end{equation}
               for some $R\in C^\infty(\bar \Omega,\Sym_n)$. If moreover
     \begin{equation}\label{e:iterativewbounds}
         [w_j]_l\leq \nu_j^{l-1} \quad\text{ for } l=0,\ldots,N^{i-1}-J+1,
     \end{equation} then  
     \begin{equation}\label{e:inductiveR}
         \|R\|_0\leq C\left(K^{-J}+\sdel\right)\,.
     \end{equation}
 \end{lemma}
We postpone the proof of this lemma and show how to conclude with it. As is apparent, assuming that the bounds \eqref{e:iterativewbounds} will be satisfied by our choice of $w=w_j$, the only error terms which are not small enough with the choice of $\nu_j = K\nu_{j-1}$ are of the form 
\[\delta \frac{\nu_k}{\nu_j}\gamma_2(\nu_j x\cdot \eta_{ij}) A_k(\nu_k x\cdot \eta_{ik},x)\odot \eta_{ik}.\]
This is exactly the setting of Proposition \ref{p:IBP} with $\lambda =\lambda$ (by assumption \eqref{a:subst2} and the definition of $A_k$), $\mu =\nu_k$ and $\nu=\nu_j$, $N_I =N^{i-1}$,  $\gamma=\gamma_2$. Hence, for each $k=i,\ldots,j-1$ we can find smooth vector fields $w_k^J, A_k^J$, periodic $\gamma_k^J$ and remainders $\mathcal F^J_{m,k}, \mathcal F^J_{l,k}$ such that 
\[\gamma_2(\nu_j x\cdot \eta_{ij}) A_k(\nu_k x\cdot \eta_{ik},x)\odot \eta_{ik} = 2 \sym(Dw^J_k) + \left(\tfrac{\nu_k}{\nu_j}\right)^J\gamma^J_kA^J_k\odot\eta_{ik}+\mathcal F^J_{m,k}+ \mathcal F^J_{l,k}\,,\]
and where 
\begin{equation}\label{e:inductivemixedblock}
    \|\mathcal F_{m,k}^J\|_0 \leq C\frac{\lambda}{\nu_j} \leq CK^{-(J+j-i)}\leq CK^{-J}
\end{equation}
and $\mathcal F_{l,k}^J\in \mathcal V_{i+1}$. Let therefore  
\[w_j:= -\sum_{k=i}^{j-1} \frac{\nu_k}{\nu_j}w_k^J\,\]
in the definition \eqref{d:perturbation} of $u_j$.
Using the estimates \eqref{e:westimates} we find 
\[ [w_j]_l\leq CK^{-1} \nu_j^{l-1}\leq \nu_j^{l-1} \qquad \text{ for }l=0,\ldots, N^{i-1}-J+1,\]
if $K_*$ is large enough depending only on $N^{i-1},n$ and $\rho$. Therefore, estimates \eqref{e:iterativewbounds} are satisfied. Hence,  using \eqref{e:subst2_inductiveinduced} and  the inductive assumption \eqref{e:subst2_iterass},
\begin{align}
    Du_j^TDu_j - \left(Du^T Du + \delta\sum_{k=i}^j a_k^2 \eta_{ik}\otimes \eta_{ik}\right) = \delta \mathcal F_{i}^{j} +\delta  \mathcal R_{i}^j
\end{align}
with \begin{align}
    \mathcal F_i^j &:= \mathcal F_i^{j-1}+\sum_{k=i}^{j-1}\frac{\nu_k}{\nu_j}\mathcal F_{l,k}^J\\
    \mathcal R_i^j &:=  \mathcal R_{i}^{j-1} +R +\sum_{k=i}^{j-1}\left(\left(\tfrac{\nu_k   }{\nu_j}\right)^{J+1}\gamma_k^J A_k^J\odot\eta_{ik}+\tfrac{\nu_k}{\nu_j}\mathcal F_{m,k}^J\right).
\end{align}
It remains to show that $\mathcal F_i^j$ and $\mathcal R_i^j$ satisfy \eqref{e:subst2F_i}, \eqref{e:subst2R_i}, that \eqref{e:susbst2_iterassv-u} is satisfied, and to prove the lemma.

First, since $\mathcal F_{l,k}^J\in \mathcal V_{i+1} $ for any $k=i,\ldots,j-1$, the same holds for $\mathcal F_i^j$ by the inductive assumption for $\mathcal F_i^{j-1}$. Moreover, from the inductive assumption and \eqref{e:F_lestimates} 
\[\|\mathcal F_i^j\|_0\leq CK^{-1}\,,\]
whereas the other estimates in \eqref{e:subst2F_i} follow from \eqref{e:F_lestimates} in view of $\nu_k\leq \nu_j$ for all $k=i,\ldots,j-1$, and the inductive assumption. 

The fact that $\mathcal R_i^j$ satisfies \eqref{e:subst2R_i} follows from the inductive assumption, \eqref{e:inductiveR}, $\nu_j=K\nu_{j-1}$ and \eqref{e:inductivemixedblock}. 

Lastly, from \eqref{e:stepv-u} and the inductive assumption we find
\[\|u_j-u\|_0\leq C\sdel\lambda^{-1}\,,\quad [u_j-u]_{k+1}\leq C\sdel\nu_j^k\] 
for all $k=0,\ldots,\min\{N^{i-1}-J, N^{i}_{j-1}+1-1\}$. But
\[\min\{N^{i-1}-J, N^{i}_{j-1}+1-1\} = N^{i-1}-J-(j-1-i) = N^i_j+1\,,\] i.e., \eqref{e:susbst2_iterassv-u} follows.

It therefore remains to prove the lemma. 

 \begin{proof}[Proof of Lemma \ref{l:inductive}]
Observe first that from \eqref{e:inducedmetric}, we get
\begin{align*}
    Du_j^TDu_j &= Du_{j-1}^TDu_{j-1} + \delta a_j^2 \eta_{ij}\otimes\eta_{ij} + \delta^{\sfrac{3}{2}}\mathcal R + 2\delta\sym(Dw_j)\\
         &\hspace{1cm }  + 2\delta^{\sfrac{1}{2}}\frac{a_j\gamma_2}{\nu}\sym\left( Du_{j-1}^T D\zeta\right) +2\delta \frac{2\gamma_1+\gamma_2\gamma_2'}{\nu}\sym\left( \nabla(a_j^2)\otimes \eta_{ij}\right) +\delta \frac{\gamma_2^2}{\nu^2}\nabla a_j\otimes \nabla a_j\,,
\end{align*}
where $\nu= \nu_j = K^{J+j-i}\lambda$. We will now identify all terms which are already of sufficiently small size $O(\delta(K^{-J}+\sdel))$ to be put into $\delta R$. Under the assumption $[w_j]_l\leq \nu_j^{l-1}$, $\delta^{\sfrac{3}{2}}\mathcal R$ is such a term by \eqref{e:steprest}. The last two terms are also sufficiently small:
\[\|\delta \frac{2\gamma_1+\gamma_2\gamma_2'}{\nu}\sym\left( \nabla(a_j^2)\otimes \eta_{ij}\right)\|_0\leq C\delta \|a_j\|_0[a_j]_1 \nu^{-1} \leq C \delta\frac{\lambda}{\nu}\leq C \delta K ^{-(J+j-i)}\leq C\delta K^{-J}\,,\]
and 
\[ \|\delta\frac{\gamma_2^2}{\nu^2}\nabla a_j\otimes \nabla a_j\|_0\leq C\delta[a_j]_1^2\nu^{-2}\leq C\delta\frac{\lambda^2}{\nu^2}\leq C\delta K^{-J}\,.\]
Therefore, we will only need to analyze
\begin{equation}\label{e:importanterror}
    2\delta^{\sfrac{1}{2}}\frac{a_j\gamma_2(\nu_j x\cdot \eta_{ij})}{\nu_j}\sym\left( Du_{j-1}^T D\zeta\right),
\end{equation}
since this is only of the order $\delta \nu_{j-1}\nu_j^{-1} =\delta K^{-1}$ by \eqref{e:subst2_zeta}. 

The crucial observation is that the large part of this term is in fact of the form 
\[\delta\sum_{k=i}^{j-1}\frac{\nu_k}{\nu_j}\gamma_2(\nu_j x\cdot \eta_{ij}) A_k(\nu_{k} x\cdot\eta_{ik},x)\odot \eta_{ik}\]
for  $A_k\in C^\infty(\S^1\times \bar\Omega,\R^n)$ given by \eqref{d:A_k}. 

Indeed, observe that since $\zeta$ is normal to $u_{j-1}$ it holds 
\begin{equation}\label{e:secondder}
    \sym(Du_{j-1}^T D\zeta) = -  D^2 u_{j-1} \cdot \zeta\,,
\end{equation}
where the latter expression denotes the matrix with entry ${kl}$ equal to $-\partial^2_{kl}u_{j-1}\cdot \zeta$. On the other hand, by construction, setting $u_{i-1}:= u $, 
\begin{align*}D^2 u_{j-1} &= \sum_{k=i}^{j-1}\left(D^2u_k-D^2u_{k-1}\right) +D^2 u\\&= \sum_{k=i}^{j-1}\left(D^2u_k-\left(D^2u_{k-1}+\sdel\nu_k a_k\gamma_2''(\nu_k x\cdot\eta_{ik})\zeta_k\eta_{ik}\otimes\eta_{ik}\right)\right)+D^2 u\\
&\quad +\sum_{k=i}^{j-1}\sdel\nu_k a_k\gamma_2''(\nu_k x\cdot\eta_{ik})\zeta_k\eta_{ik}\otimes\eta_{ik} \\
&= \sum_{k=i}^{j-1}\left(D^2u_k-\left(D^2u_{k-1}+\sdel\nu_k a_k\gamma_2''(\nu_k x\cdot\eta_{ik})\zeta_k\eta_{ik}\otimes\eta_{ik}\right)\right)+D^2 u\\
&\quad +\sum_{k=i}^{j-1}\sdel\nu_k a_k\gamma_2''(\nu_k x\cdot\eta_{ik})(\zeta_k-\zeta)\eta_{ik}\otimes\eta_{ik}\\
&\quad +\sum_{k=i}^{j-1}\sdel\nu_k a_k\gamma_2''(\nu_k x\cdot\eta_{ik})\zeta\eta_{ik}\otimes\eta_{ik}\\
& =: I\\
&\quad +II \\
&\quad + III\,,\end{align*}
so that the large error term splits as 
\[ 2\delta^{\sfrac{1}{2}}\frac{a_j\gamma_2(\nu_j x\cdot \eta_{ij})}{\nu_j}\sym\left( Du_{j-1}^T D\zeta\right) = -2\delta^{\sfrac{1}{2}}\frac{a_j\gamma_2(\nu_j x\cdot \eta_{ij})}{\nu_j}(I+II+III)\cdot \zeta\,.\]
Now observe that by estimate \eqref{e:sharpD2} and by the assumption $\|D^2u\|_0\leq \sdel\lambda$ it follows 
\[\|I\|_0\leq C\sdel(\lambda +\sdel \nu_{j-1}).\] 
Moreover, using the inductive assumption $[u_k-u]_1\leq C\sdel$ for all $k=i,\ldots,j-1$ we find from \eqref{e:normalcloseness} that in fact $\|\zeta_k-\zeta\|_0\leq C\sdel$, so that 
\[ \|II\|_0\leq C\delta\nu_{j-1}\,.\]
This shows that these first two terms are small enough: \[\|2\delta^{\sfrac{1}{2}}\frac{a_j\gamma_2(\nu_j x\cdot \eta_{ij})}{\nu_j}(I+II)\cdot \zeta\|_0\leq C\delta\left(\frac{\lambda}{\nu_j} +\sdel\frac{\nu_{j-1}}{\nu_j}\right)\leq C\delta\left( K^{-J}+\sdel\right)\,.\]
Lastly, using $|\zeta|=1$ yields that 
\[- 2\delta^{\sfrac{1}{2}}\frac{a_j\gamma_2(\nu_j x\cdot \eta_{ij})}{\nu_j}III\cdot\zeta = \delta \sum_{k=i}^{j-1} \frac{\nu_k}{\nu_j}\gamma_2(\nu_jx\cdot\eta_{ij}) A_k(\nu_k x\cdot\eta_{ik},x)\odot \eta_{ik},\]
with $A_k$ from \eqref{d:A_k}, as desired. 
\end{proof}
This finishes the proof of the proposition.\end{proof}

We now prove Proposition \ref{p:stage} by iterating on substages after decomposing the metric defect with the help of Lemma \ref{l:babykaellen}. 

\subsection{Proof of Proposition \ref{p:stage}}
As is apparent in \eqref{e:ajestimate}, \eqref{e:subst1v-u} and \eqref{e:subst2v-u}, when applying the decomposition lemma and each of the substages, we loose information on derivatives: from the control of $N_K$ derivatives on $H$ we extract information on $N_K-J+1$ derivatives of the coefficients $a_{ij}$; from information on $N^0+2$ derivatives on $u$ we extract information on $ N^0-n(J+1) +2$ derivatives of $v$, etc. This so-called "loss of derivatives" is managed by a preliminary mollification of the map $u$ and the metric $g$. It then suffices to choose  large enough $N_K,N^0$ (depending only on $J,n$)  such that information on $N_K+1$ (resp. $N^0+2$) derivatives (of the mollification) of $g$ (resp. $u$) yields a control over the $C^2$ norm of the final map $v$.

As can be checked, it suffices to fix 
\begin{equation}
    \label{d:Ns} N^0 :=2(n-1)J+n(n-1)/2+2\,, \quad N_K :=N^0+J,
\end{equation}
although the precise values are not important, only that they can be chosen large enough depending only on $J,n$.

\textbf{Step 0: Mollification.} Let us therefore mollify the map $u$ and the metric $g$ with a standard, radially symmetric mollifier $\varphi_\ell$ at length scale 
\begin{equation}\label{d:ell}
\ell = \frac{d}{\hat C \lambda}\,,
\end{equation}
where $\hat C\geq 1$ is a constant only depending on $n,J$ and $g$ to be chosen below. The parameter $\ell$ is chosen in such a way as not to increase the metric defect through the mollification process. Let then $u_\ell = u\ast\varphi_\ell, g_\ell =g\ast\varphi_\ell$. Since $\ell\leq d $ it follows that $u_\ell, g_\ell$ are well-defined on $\bar V$ and smooth there. Moreover, the mollification estimates from Lemma \ref{l:mollification} yield 
\begin{equation}\label{e:u-uell}
      \| u- u_\ell\|_k \leq C \ell^{2-k}\|u\|_2 \leq C \sdel \lambda^{k-1}\text{ for } k =0,1,
\end{equation}
for a constant $C$ only depending on $n$, and
\begin{equation}\label{e:g-gell}
    \|g-g_\ell\|_0 \leq C\ell^2\|g\|_2 \leq \lambda^{-2}
\end{equation}
if $\hat C$ is large enough depending on $g$. 

Moreover, for any $k=1,\ldots,N^0+1$ we find 
\begin{equation}\label{e:uellestimates}
    [ u_\ell]_{k+1}\leq C \ell^{1-k}[u]_2\leq \sdel \ell^{-k}
\end{equation}
if $\hat C$ is large enough depending on $n,N^0$, i.e., $n$ and $J$.

\textbf{Step 1: Decomposition.} Define the matrix field 
\begin{equation}\label{d:H}
H = \frac{1}{\delta}\left( g_\ell - Du_\ell^TDu_\ell - \hat \delta H_0\right)\,.
\end{equation} 
By \eqref{a:metricdeficit} it follows with the help of \eqref{e:quadraticmoll}
\begin{align*}
    \|H-H_0\|_0&\leq \frac{1}{\delta}\left(\|(g-Du^TDu-\delta H_0)\ast\varphi_\ell \|_0 + \|Du^TDu-Du_\ell^TDu_\ell\|_0 + \hat\delta |H_0|\right)\\
    &\leq 2r +C\ell^2[u]_2^2 \leq \frac{r_K}{2}
\end{align*}
if $r_* $ is small enough and $\hat C$ is large enough depending on $r_K(n,N_K)$ and $n$, i.e., $n $ and $J$. Here and below, $r_K$ is the constant from Lemma \ref{l:babykaellen}.

Moreover, we can similarly estimate for $k=1,\ldots, N_K+1$,
\begin{align*}
    [H]_k &\leq \frac{C_{N_K}}{\delta}\left(\left[(g-Du^TDu)\ast\varphi_\ell\right]_k+ \left[Du^TDu-Du_\ell^TDu_\ell\right]_k\right)\\
    &\leq C_{N_K}\ell^{-k}.
\end{align*}
We absorb the latter constant by setting $ \lambda_0:= C_{N_K}\ell^{-1}$ so that $[H]_k\leq \lambda_0^k$ for all $k=1,\ldots, N_K+1$. 

Now fix 
\begin{equation}\label{d:firstnfreq}
    \tilde \lambda_0:= \tilde C \lambda_0\,, \quad \lambda_i= \tilde \lambda_0 K^i  \text{ for } i =1,\ldots, n\,,
\end{equation}
for a constant $\tilde C$ depending only on $J,n$ to be chosen below. We assume moreover that $K$ is large enough (depending on $n$ and $J$) such that 
\[\frac{\lambda_0}{\lambda_1}=(\tilde C K) ^{-1}\leq K^{-1}\leq \frac{r_K}{2}\,.\]
We can then apply Lemma \ref{l:babykaellen} to find coefficients $\{a_{ij}\}_{1\leq i\leq j\leq n}\subset C^\infty(\bar V)$ and an error term $\mathcal E\in C^\infty(\bar V,\Sym_n)$ such that
\begin{equation}\label{e:initialdecomp}
    H = \sum_{1\leq i\leq j\leq n} a_{ij}^2 \eta_{ij}\otimes\eta_{ij} + \sum_{k=1}^n\frac{2\pi}{\lambda_k^2} \nabla a_{1k}\otimes\nabla a_{1k} +\mathcal E\,,
\end{equation}
with 
\begin{equation}\label{e:Kaellenerror}
    \|\mathcal E\|_0 \leq C K^{-J}
\end{equation}
and
\begin{equation}
    a_{ij}\geq r_K\,, \quad [a_{ij}]_k\leq C\lambda_0^k\text{ for } k=0,\ldots, N_K-J+1 \,.
\end{equation}
If $\tilde C$ is large enough, depending on $n,J$, then we have in fact 
\begin{equation}\label{e:firstaest}
    [a_{ij}]_k\leq \tilde\lambda_0^k \text{ for all } k=1,\ldots,N_K-J+1=N^0+1\,.
\end{equation} 

\textbf{Step 2: property $(P_\rho)$.}
Observe that assumption \eqref{a:metricdeficit} implies 
\[ Du^T Du = g-\delta H_0 -\left(g- Du^TDu -\delta H_0\right) \leq g-\delta H_0 +r\delta \mathrm{Id} = g-\delta\left(H_0-r\mathrm{Id}\right)\leq g \]
if $r\leq 1$ is small enough depending on $n$. Similarly, 
\[Du^T Du \geq \frac12 g\]
if $\delta$ is small enough depending on $n $ and $g$. 
Moreover, taking the trace yields 
\begin{equation}
    \|Du\|_0\leq C(g)\,.
\end{equation}
Since
\[
    Du_\ell^TDu_\ell = Du^TDu +Du^T(Du_\ell-Du) + (Du_\ell-Du)^TDu_\ell ,
\]
it follows by  \eqref{e:u-uell}, that  
\[\frac14 g\leq  Du_\ell^T Du_\ell \leq 2g\,, \]
if $\delta $ is small enough depending on $n$ and $g$. Thus, in this case we have
\begin{equation}\label{e:uellprho}
    \frac{1}{\rho}\mathrm{Id}\leq Du_\ell^T Du_\ell \leq \rho \mathrm{Id}
\end{equation}
for some $\rho$ only depending on $n$ and $g$. If necessary, we can enlarge $\rho$,  depending additionally on $J$, such that in addition we have for all $1\leq i\leq j\leq n$
\begin{equation}
    a_{ij}\leq \rho.
\end{equation}

\textbf{Step 3: adding primitive metrics} We now iteratively add the families $i=1,\ldots,n$ of primitive metrics by applying first Proposition \ref{p:substage1} and then Proposition \ref{p:substage2} successively. The following lemma captures the inductive procedure. 

\begin{lemma}\label{l:iterative} 
If $K$ is large enough (depending on $n,J,\rho$) and $\delta$ small enough (depending on $n,J,\rho,g$), then for $i=1,\ldots,n$ there exists $u_i\in C^\infty(\bar V,\R^{n+1})$ satisfying $(P_{\rho_i})$ with $\rho_i=(1+i/n)\rho$ and
\begin{equation}\label{e:ithdefect}
     Du_i^TDu_i = Du_\ell^TDu_\ell + \delta\sum_{k=1}^i \sum_{l=k}^na_{kl}^2 \eta_{kl}\otimes\eta_{kl} +\delta \sum_{k=1}^n \frac{2\pi}{(\lambda_k)^2}\nabla a_{1k}\otimes \nabla a_{1k} + \delta \mathcal F^i + \delta \mathcal R^i\,,
\end{equation}
for $\mathcal F^i, \mathcal R^i \in C^\infty(\bar V,\Sym_n)$ satisfying
\begin{align}
     &\mathcal F^i\in \mathcal V_{i+1}\,, \quad \|\mathcal F^i\|_0\leq CK^{-1},\quad [\mathcal F^i ]_k\leq C\mu_i^k \quad \text{ for } k=1,\ldots,N_i+1\,,\label{e:Fi}\\
     &\|\mathcal R^i\|_0\leq C\left( K^{-J}+\sdel\right)\,,\label{e:Ri}\\
     &\|u_i-u_\ell\|_0\leq C\sdel\tilde\lambda_0^{-1}\,,\quad [u_i-u_\ell]_{k+1}\leq C\sdel\mu_i^k\quad\text{ for } k=0,\ldots,N_i+1,\label{e:ui-uell}
\end{align}
where $\mu_i= \bar C_i K^{(i-1)J+ni}\tilde \lambda_0$ for constants $\bar C_i$ depending only on $n,g,J,i$, and  \begin{equation}\label{d:N^i}
   \begin{split}
        N_1&= N^0-n(J+1)\\
      N_i &= N_1-(i-1)J-(i-1)n+ i(i+1)/2-1\quad \text { for }2\leq i\leq n\\
      N_n & = N_{n-1}-1.
   \end{split}
   \end{equation}
\end{lemma}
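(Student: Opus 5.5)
We argue by finite induction on $i$. For $i=1$ we apply Proposition~\ref{p:substage1} to $u=u_\ell$ with coefficients $a_j=a_{1j}$ and frequency $\lambda=\tilde\lambda_0$. The hypotheses are checked as follows:
\begin{itemize}
\item $(P_\rho)$ is \eqref{e:uellprho};
\item $0\le a_{1j}\le\rho$ is Step~2;
\item $[a_{1j}]_k\le\tilde\lambda_0^k$ for $k\le N^0+1$ is \eqref{e:firstaest};
\item $[u_\ell]_{k+1}\le\sdel\ell^{-k}\le\sdel\tilde\lambda_0^k$ follows from \eqref{e:uellestimates}, since $\tilde\lambda_0=\tilde C C_{N_K}\ell^{-1}\ge \ell^{-1}$;
\item $\tilde\lambda_0\sdel\ge1$ holds because $\lambda\ge\delta^{-1/2}$ and $\tilde\lambda_0\gtrsim\lambda/d\ge\lambda$.
\end{itemize}
The admissibility $J\le N^0/n-1$ is guaranteed by the choice \eqref{d:Ns}. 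The output \eqref{e:subst1induced} uses exactly the frequencies $\lambda_k=\tilde\lambda_0K^k$, so it reproduces the $\nabla a_{1k}\otimes\nabla a_{1k}$ terms of \eqref{e:ithdefect}. We then set $\mathcal F^1=\mathcal F_1\in\mathcal V_2$ and $\mathcal R^1=\mathcal R_1$. The bound \eqref{e:ui-uell} holds with $\mu_1=\tilde\lambda_0K^n$ and $N_1=N^0-n(J+1)$. Finally, $(P_{\rho_1})$ follows from $[u_1-u_\ell]_1\le C\sdel$ once $\delta$ is small.

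For the inductive step $i-1\to i$ (with $i\ge2$), the key point is to absorb the remainder $\mathcal F^{i-1}\in\mathcal V_i$ into the coefficients of family $i$ \emph{before} applying Proposition~\ref{p:substage2}. Since $\{\eta_{kl}\otimes\eta_{kl}\}_{k\ge i}$ is a basis of $\mathcal V_i$ and $\mathcal F^{i-1}\in\mathcal V_i$, we may write $\mathcal F^{i-1}=\sum_{i\le k\le l}L_{kl}(\mathcal F^{i-1})\eta_{kl}\otimes\eta_{kl}$. We then replace all later coefficients via
\[
a_{kl}\;\rightsquigarrow\;\bigl(a_{kl}^2+L_{kl}(\mathcal F^{i-1})\bigr)^{1/2},\qquad k\ge i .
\]
This is well defined because $a_{kl}\ge r_K$ and $\|\mathcal F^{i-1}\|_0\le CK^{-1}\le r_K^2/2$ for $K$ large. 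It preserves $0\le a\le\rho$, after enlarging $\rho$ slightly at the outset. With this substitution $\delta\mathcal F^{i-1}$ disappears exactly from \eqref{e:ithdefect}, and the family-$i$ coefficients obey $[\tilde a_{il}]_k\le C\mu_{i-1}^k$ by the chain rule (Lemma~\ref{l:composition}) together with \eqref{e:Fi}. Because the later families also receive further corrections from subsequent $\mathcal F$'s, the substitution is best performed cumulatively: at stage $i$ one adds the current remainder to the running coefficients of families $\ge i$.

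We then apply Proposition~\ref{p:substage2} to $u=u_{i-1}$ and the modified coefficients of family $i$, with
\[
\lambda=\hat\mu_{i-1}:=C'\mu_{i-1}.
\]
The constant $C'$ absorbs the constants in \eqref{e:ui-uell} and \eqref{e:Fi}, so that $[u_{i-1}]_{k+1}\le\sdel\hat\mu_{i-1}^k$ (combining with $[u_\ell]_{k+1}\le\sdel\tilde\lambda_0^k$) and $[\tilde a_{il}]_k\le\hat\mu_{i-1}^k$ hold for $k\le N_{i-1}+1$. We use $(P_{\rho_{i-1}})$ with $\rho_{i-1}\le 2\rho$, and the requirement $n\le J\le N_{i-1}-n$ follows from \eqref{d:Ns}. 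The output gives \eqref{e:ithdefect} with
\[
\mathcal F^i=\mathcal F_i\in\mathcal V_{i+1},\qquad \mathcal R^i=\mathcal R^{i-1}+\mathcal R_i,\qquad \mu_i=\hat\mu_{i-1}K^{J+n-i}.
\]
This is at most $\bar C_iK^{(i-1)J+ni}\tilde\lambda_0$ after adjusting the constants, since the exponent of $K$ grows by $J+n-i\le J+n$. The estimates \eqref{e:ui-uell} follow by the triangle inequality from \eqref{e:subst2v-u} and the inductive hypothesis, and $(P_{\rho_i})$ holds because $[u_i-u_\ell]_1\le C\sdel\le \rho/(n\cdot\text{const})$ for $\delta$ small.

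The main obstacle is the bookkeeping. First, the derivative counts must match: the index range in \eqref{e:subst2F_i} and \eqref{d:N^i} of Proposition~\ref{p:substage2} has to produce the stated $N_i$. Second, all constants (which enter via $C'$ and $\bar C_i$) must depend only on $n,g,J$, so that $K_*$ and $\delta_*$ can be chosen once for all $n$ substages. Third, the modified coefficients must keep the lower bound $r_K/2$ uniformly through all stages; the cumulative correction is $\sum_i CK^{-1}$, which is small enough for $K$ large.
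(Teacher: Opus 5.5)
Your route is the paper's: Proposition \ref{p:substage1} for $i=1$, then at each later stage you absorb $\mathcal F^{i-1}\in\mathcal V_i$ into the family-$i$ amplitudes via the linear maps $L_{kl}$ and apply Proposition \ref{p:substage2} at frequency $\tilde C\mu_{i-1}$. However, the central cancellation step is wrong in two ways.

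First, the sign is wrong. $Du_{i-1}^TDu_{i-1}$ already contains $+\delta\mathcal F^{i-1}$, whose family-$i$ component is $\delta\sum_{j\ge i}L_{ij}(\mathcal F^{i-1})\eta_{ij}\otimes\eta_{ij}$. You must therefore add $\delta\sum_{j\ge i}\bigl(a_{ij}^2-L_{ij}(\mathcal F^{i-1})\bigr)\eta_{ij}\otimes\eta_{ij}$, i.e.\ use $b_{ij}=\bigl(a_{ij}^2-L_{ij}(\mathcal F^{i-1})\bigr)^{1/2}$. With your $+$ sign this component is doubled rather than cancelled.

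Second, the output $\mathcal F^i=\mathcal F_i$ is incorrect. The identity \eqref{e:ithdefect} is written with the original $a_{kl}$ and only the families $k\le i$. Changing the amplitudes of families $k\ge i+1$ has no effect on $Du_i^TDu_i$ until those families are actually added. So the component $\sum_{i+1\le k\le l\le n}L_{kl}(\mathcal F^{i-1})\eta_{kl}\otimes\eta_{kl}\in\mathcal V_{i+1}$ is still present in $Du_i^TDu_i$ and must be recorded. Comparing the inductive hypothesis with the output of Proposition \ref{p:substage2} forces
\[
\mathcal F^i=\mathcal F_i+\sum_{i+1\le k\le l\le n}L_{kl}(\mathcal F^{i-1})\,\eta_{kl}\otimes\eta_{kl},
\]
which is exactly the paper's choice. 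With $\mathcal F^i=\mathcal F_i$, \eqref{e:ithdefect} is false for $i<n$.

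The fix is cheap and makes your ``cumulative'' substitution unnecessary: modify only family $i$ at stage $i$ and carry the rest in $\mathcal F^i$. Since the $L_{kl}$ are linear, $L_{kl}(\mathcal F^{i})=L_{kl}(\mathcal F_i)+L_{kl}(\mathcal F^{i-1})$ for $k\ge i+1$. The amplitudes used at later stages therefore coincide automatically with your running ones. The bounds \eqref{e:Fi} survive because $[L_{kl}(\mathcal F^{i-1})]_m\le C\mu_{i-1}^m\le C\mu_i^m$. Alternatively you could restate the inductive invariant with running amplitudes, but then you are no longer proving the lemma as stated.

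A smaller point: your claim that $n\le J\le N_{i-1}-n$ follows from \eqref{d:Ns} is false. From \eqref{d:Ns} one computes $N_{n-1}=1$, and $N_{n-2}=J+2$ when $n\ge 3$. What the proof of Proposition \ref{p:substage2} actually uses is $J\le N^{i-1}$ for Proposition \ref{p:IBP} when $i\le n-1$; this holds since $N_{i-1}\ge N_{n-2}=J+2$. When $i=n$ no integration by parts is performed at all. The paper's own application of the substage relies on this implicitly as well.
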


In particular, once the lemma is proven, we see that the map $v:=u_n$ has the desired properties. Indeed, recalling $\mathcal V_{n+1}=\{0\}$, the decomposition \eqref{e:initialdecomp} and the definition of $H$ in \eqref{d:H}, we find that 
\begin{align*}
         Du_n^TDu_n &= Du_\ell^TDu_\ell + \delta\sum_{1\leq k\leq l\leq n}a_{kl}^2 \eta_{kl}\otimes\eta_{kl} +\delta \sum_{k=1}^n \frac{2\pi}{(\lambda_k)^2}\nabla a_{1k}\otimes \nabla a_{1k} +  \delta \mathcal R^n\,\\
         & = Du_\ell^TDu_\ell+\delta( H -\mathcal E)+ \delta\mathcal R^n\\
         &= g_\ell -\hat\delta H_0 +\delta(\mathcal R^n-\mathcal E)\\
         &= g -\hat \delta H_0 + \delta(\mathcal R^n-\mathcal E) + (g_\ell-g)\,.
\end{align*}
The estimate \eqref{e:metricdeficit} therefore follows in view of \eqref{e:Ri}, \eqref{e:Kaellenerror} and \eqref{e:g-gell}, whereas estimates \eqref{e:C1} and \eqref{e:C2} follow from \eqref{e:ui-uell} and \eqref{e:u-uell} in view of $N_n=0$ 
% \begin{align*}
%     N_n&=N_1-(n-2)(J+n)+n(n-1)/2-1 -1\\
%     &= N^0-nJ-n -(n-2)(J+n)+n(n-1)/2-2 \\
%     &= N^0-2(n-1)J-n(n-1)/2-2 \\
%     &= 0
% \end{align*}
by definition of $N^0 $ in \eqref{d:Ns}, and 
\[\mu_n = \bar C_n K^{(n-1)J + n^2}\tilde\lambda_0 = \bar C_n\tilde C C_{N_K} \ell^{-1}K^{(n-1)J + n^2} = C d^{-1} \lambda K^{(n-1)J + n^2} \]
for some constant $C$ only depending on $n,J, g$.
Hence, it suffices to prove the lemma. 
\begin{proof}[Proof of Lemma \ref{l:iterative}]
\textbf{$i=1$: Adding the first family.} We start the finite induction by using Proposition \ref{p:substage1} to verify $i=1$. 

Recall \eqref{e:firstaest}, \eqref{e:uellestimates}, \eqref{e:uellprho}.  Assuming our $K\geq K_*(n,N^0,\rho)$, we can therefore apply Proposition \ref{p:substage1} with \[u=u_\ell,\quad  a_j= a_{1j} \text{  for }j=1,\ldots,n,\quad \lambda= \tilde \lambda_0\]
to generate a map $u_1:=v$, which, satisfies \eqref{e:ithdefect}–\eqref{e:ui-uell} for $i=1$
by \eqref{e:subst1induced}–\eqref{e:subst1v-u} upon setting $\mathcal F^1=\mathcal F_1, \mathcal R^1 =\mathcal R_1$, where we used that $\lambda_k = K^k\tilde \lambda_0$. 
\medskip

\textbf{$i\geq 2$: Adding family $i$.} Assume therefore that the claim is true up to some $1\leq i-1\leq n-1$. We will now apply Proposition \ref{p:substage2} to add family $i$ of primitive metrics. 

First, $u_{i-1}$ satisfies $(P_{2\rho})$ by the inductive assumption. 

Next, since $\mathcal F^{i-1}\in \mathcal V_i$ we can decompose 
\begin{equation}
    \label{e:Fidecomp}
    \mathcal F^{i-1} = \sum_{i\leq k\leq l\leq n} L_{kl}(\mathcal F^{i-1})\eta_{kl}\otimes\eta_{kl}.
\end{equation} 
We now define the adjusted amplitudes 
\begin{equation}
    b_{kl}= \sqrt{a_{kl}^2-L_{kl}(\mathcal F^{i-1})}\,.
\end{equation}
for $i\leq k\leq l\leq n$. These are well-defined for large enough $K$, since by \eqref{e:Fi} and by linearity of the maps $L_{kl}$ it holds 
\[ \|L_{kl}(\mathcal F^{i-1})\|_0\leq CK^{-1}\leq \frac{r_0^2}{2}\]
if $K$ is large enough depending on $n,J,\rho(n,J)$, and $a_{kl}\geq r_0$ by \eqref{e:ajlower}.  
Moreover,
\[ [L_{kl}(\mathcal F^{i-1})]_m\leq C \mu_{i-1}^m\]
for $m=1,\ldots,N_{i-1}+1$, which implies 
\begin{equation}\label{e:bestimate}
b_{kl}\leq 2\rho \,,\quad [b_{kl}]_m\leq C(\tilde\lambda_0^m+\mu_{i-1}^m) \leq C\mu_{i-1}^m
\end{equation}
for all $m=1,\ldots,N_{i-1}+1$, where we used Lemma \ref{l:composition} and the lower bound $a_{kl}^2-L_{kl}(\mathcal F^{i-1})\geq r_0^2/2$ in the last estimates. 

We absorb the constants by setting $\tilde \lambda = \tilde C \mu_{i-1}$ for some constant $\tilde C$ only depending on $n$ and $J$ to find that 
\[ [u_{i-1}]_{k+1}\leq \sdel \tilde\lambda^k\,,\quad [b_{il}]_k\leq \tilde\lambda^k\]
for all $k=1,\ldots,N_{i-1}+1$ and $l=i,\ldots,n$. We can therefore apply Proposition \ref{p:substage2} with 
\[ u= u_{i-1},\quad  a_j = b_{ij} \text{ for } j=i,\ldots,n,\quad \lambda=\tilde\lambda, \quad N^{i-1}=N_{i-1}\]
to generate $v=:u_i$ such that 
\[ Du_i^TDu_i -\left(Du_{i-1}^TDu_{i-1} +\delta \sum_{j=i}^n \left(a_{ij}^2-L_{ij}(\mathcal F^{i-1})\right)\eta_{ij}\otimes\eta_{ij} \right) = \delta \mathcal F_i +\delta \mathcal R_i \]
for some $\mathcal F_i,\mathcal R_i$ satisfying \eqref{e:subst2F_i},\eqref{e:subst2R_i}. Moreover, we have 
\begin{equation}
    \label{e:u2-u1}
    \begin{split}
        \|u_i-u_{i-1}\|_0&\leq C\sdel \tilde \lambda^{-1}\,,\\
        [u_i-u_{i-1}]_{k+1}&\leq C\sdel \left( \tilde \lambda K^{J+n-i}\right)^k\quad \text{ for }k=0,\ldots,N^i+1\,,
    \end{split}
\end{equation}
where we recall from \eqref{d:N^i} that
\[N^i = \begin{cases}
            N^{i-1}-1  \quad \text{ if } i=n\\
            N^{i-1 }- J-(n-i) \quad \text{ for } 2\leq i \leq n-1\end{cases}\]
            and $N^{i-1}=N_{i-1}$. In particular, one verifies by induction that $N^i=N_i$.
            
            % if $i=n$ we have $N^n = N^{n-1}-1=N_{n-1}-1=N_n$, and if $i\leq n-1$ we have, using the inductive hypothesis, 
            % \begin{align*}
            %     N^i = N_{i-1}-J-(n-i) &= (N_1-(i-2)J-(i-2)n + i(i-1)/2-1) -J -(n-i)\\
            %     & = N_1-(i-1)J-(i-1)n+ i(i+1)/2 -1 = N_i\,.
            % \end{align*}
            Thus, by the inductive assumption and \eqref{e:subst2v-u}, we find 
\[\| u_i-u_\ell\|_0\leq C\sdel\left(\tilde\lambda_0^{-1}+ \tilde\lambda^{-1}\right)\leq C\sdel \tilde\lambda_0^{-1}\,,\]
 as well as 
\[[ u_i-u_\ell]_{k+1}\leq C\sdel\left( \mu_{i-1}^k+ (\tilde \lambda K^{J+n-i})^k\right)\]
for all $k=0,\ldots,N_i+1$, i.e. \eqref{e:ui-uell} holds, since
\begin{align*}\tilde\lambda K^{J+n-i} = \tilde C\mu_{i-1}K^{J+n-i} &= \tilde C\bar C_{i-1} \tilde\lambda_0 K^{(i-2)J+n(i-1)}K^{J+n-i} \\ 
&= \tilde C\bar C_{i-1}\tilde\lambda_0 K^{(i-1)J+ni-i}\leq \tilde C\bar C_{i-1}\tilde\lambda_0 K^{(i-1)J+ni} =\mu_i\,.\end{align*}
From the inductive assumption \eqref{e:ithdefect} and the decomposition \eqref{e:Fidecomp}  it follows that 
\begin{align*}
Du_{i-1}^TDu_{i-1} + \delta \sum_{j=i}^n \left(a_{ij}^2-L_{ij}(\mathcal F^{i-1})\right)\eta_{ij}\otimes\eta_{ij} &= Du_\ell^TDu_\ell + \delta \sum_{k=1}^i\sum_{j=k}^n a_{kj}^2\eta_{kj}\otimes\eta_{kj}  \\
&\quad +\delta \sum_{k=1}^n \frac{2\pi}{(\lambda_k)^2}\nabla a_{1k}\otimes \nabla a_{1k}   + \delta \mathcal R^{i-1}\\&\quad + \delta \sum_{i+1\leq k\leq l\leq n} L_{kl}(\mathcal F^{i-1})\eta_{kl}\otimes\eta_{kl}\,.
\end{align*} 
We now set 
\begin{align*}
   & \mathcal F^i = \mathcal F_i+ \sum_{i+1\leq k\leq l\leq n} L_{kl}(\mathcal F^{i-1})\eta_{kl}\otimes\eta_{kl} \\
    &\mathcal R^i = \mathcal R_i + \mathcal R^{i-1},
\end{align*}
so that the decomposition \eqref{e:ithdefect} holds. 

Moreover, it follows $\mathcal F^i \in \mathcal V_{i+1}$ with 
\[\|\mathcal F^i\|_0\leq CK^{-1}\]
and for all $k=1,\ldots,N_i+1$ we have, using the inductive assumption and \eqref{e:subst2F_i},
\[ [\mathcal F^i]_k\leq C\left( (\tilde \lambda K^{J+n-i})^k+ \mu_{i-1}^k\right) \leq C \mu_i^k\]
by definition of $\mu_i$ and $K\geq 1$. Thus, \eqref{e:Fi} holds. 

On the other hand, \eqref{e:Ri} follows from the inductive assumption and \eqref{e:subst2R_i}. 

Lastly, the fact that $u_i $ satisfies $(P_{\rho_i})$ for small enough $\delta$ follows from the inductive assumption and \eqref{e:subst2v-u}. This concludes the proof.
\end{proof}

\section{Proof of Theorem \ref{t:main}}\label{s:pfofmain}
The proof of the main theorem follows by iteratively applying Proposition \ref{p:stage} to obtain a sequence of immersions $\{u_q\}_{q\in \N}$ which converges in the desired H\"older space to an immersion. It is analogous to the proof of the main theorem in \cite{CaoHirschInauen25}. For the reader's convenience we repeat the proof here. 

In order to apply Proposition \ref{p:stage}, we first need to construct a short immersion $u_0$ satisfying the conditions in Proposition \ref{p:stage}, since $\underline u$ need not satisfy either \eqref{a:metricdeficit} or \eqref{a:C2}. We achieve this through one stage of a classical Nash--Kuiper iteration in the next subsection. Once $u_0$ is constructed, we then iterate Proposition \ref{p:stage} to generate the sequence $\{u_q\}$ in Subsection \ref{s:iter}, and finally show in Subsection \ref{s:convergence} that the sequence converges in $C^{1,\theta}$ to an isometric immersion.
\subsection{Initial short immersion} \label{s:initial}

Due to the mollification step at the beginning of each stage proposition, the maps $\{u_q\}$ will be defined on smaller and smaller domains. In order to end up with an isometry on $\Omega$ we first extend both the metric $g$ and the short map $\underline u$ to $\R^n$ such that
\[ \|\underline u\|_{C^1(\R^n)} \leq C\|\underline u\|_{C^1(\bar \Omega)}\,, \quad \| g\|_{C^2(\R^n)} \leq C\|g\|_{C^2(\bar \Omega)}\,,\]
for a constant only depending on $n$ and $\Omega$. Such an extension procedure is well-known (see e.g. \cite{Whitney}).  By mollification and scaling  we can also assume that $\underline u$ is smooth and strictly short for $g$ on $\bar \Omega$.

By continuity we can find an open and simply connected set $V_0 \Subset \R^n$ depending on $\underline u$ and $g$, and $0<\underline\delta$ such that 
\begin{align}
    &\Omega \Subset V_0,\nonumber\\
    & g  - D\underline u^TD\underline u> 2\underline\delta h_0 \text{ on } \bar V_0\,,\label{e:short1}\\
    &\underline u \text{ is an immersion on } \bar V_0\,.\nonumber
\end{align}
By \eqref{e:short1}, for any $0\leq \delta_0<\underline\delta$, we have
\begin{equation}\label{e:firstlowerbound}
    g-D\underline u^TD\underline u - \delta_0 h_0 \geq\underline\delta h_0.
\end{equation}
We can therefore decompose 
\[ g-D\underline u^TD\underline u - \delta_0 h_0  = \sum_{i=1}^M a_i^2 \eta_i\otimes \eta_i \] 
for some $M\in \N$, unit vectors $\eta_i \in \S^{n-1}$, and $a_i \in C^2(\bar V_0)$ (see for example \cite[Lemma 3.3]{Laszlo}). Notice that while $M$ might depend on $g,\underline u$ and $\underline\delta$, it can be chosen independently of $\delta_0$ due to \eqref{e:firstlowerbound}. Following the standard Nash--Kuiper iteration procedure with classical Kuiper-type corrugations (as in \cite{CDS}), one obtains for any $\mu\geq \mu_0(\underline u, g)$ a smooth immersion $u_0$ and metric error $\mathcal{E}_0$ satisfying
\[Du_0^TDu_0=D\underline u^TD\underline u+\sum_{i=1}^M a_i^2 \eta_i\otimes \eta_i+\mathcal{E}_0\]
with
\begin{align}\label{e:u0-estimate}
    \|u_0-\underline u\|_0\leq \frac{C_1}{\mu},\quad \|u_0\|_2\leq C_1\mu^{M},\quad \|\mathcal{E}_0\|_0\leq \frac{C_1}{\mu}\,,
\end{align}
for a constant $C_1$ depending on $\underline u, g, M$ and $\underline \delta$, but not on $\delta_0$. While similar to the arguments presented earlier in this paper, the construction of $u_0$ uses classical corrugations rather than our modified ansatz, since at this stage the metric defect need not be small (a smallness assumption required for our approach, cf. Remark \ref{r:osc}). For a detailed proof, see \cite[Proposition 3.2]{CaoSze2019}, which can be applied directly to generate $u_0$. 

Thus we have
\begin{equation}
    \label{e:u0-deficit}
    g-Du_0^TDu_0-\delta_0 h_0=-\mathcal{E}_0\,.
\end{equation}
We now show that for a suitable choice of $\delta_0, \lambda_0$ and $\mu\geq \mu_0(\underline u,g)$, $u_0$ satisfies the induction assumptions. For this,   take
\[\delta_0:=a^{-\tau}\,,\, \quad \lambda_0:=a^{(M+1)\tau}\]
with some constant $1>\tau>0$ and large constant $a$ to be fixed below. For $r\leq 1$ to be fixed below in Subsection \ref{s:iter}, we choose
\begin{equation}\label{d:mu}
     \mu:= \max\{C_1r^{-1}a^{\tau}, 2C_1\varepsilon^{-1}, \mu_0(\underline u, g)\}=C_1r^{-1}a^{\tau},
\end{equation}
provided that $a$ is large. In particular, with this choice we have
\begin{equation}\label{e:delta0}
\frac{C_1}{\mu}\leq\min\{\frac\varepsilon2,\, r\delta_0\}.
\end{equation}
By taking $a$ larger if necessary (depending on $\underline u, g, r, \tau, \varepsilon$ and $M$) we can ensure that
\begin{equation}
    \label{e:lambda0}
\delta_0^{\sfrac12}\lambda_0=a^{(M+\frac12)\tau}\geq C_1^{M+1} r^{-M}a^{M\tau}=C_1\mu^M.
\end{equation}
Hence from \eqref{e:u0-deficit}, \eqref{e:delta0} and \eqref{e:lambda0}, we get the initial short immersion $u_0\in C^\infty(\bar V_0, \R^{n+1})$ such that
\begin{equation}\label{e:u0-induction}
\begin{split}
    &\|u_0-\underline u\|_0\leq\frac{\varepsilon}{2}, \quad \|u_0\|_2\leq\delta_0^{\sfrac{1}{2}}\lambda_0,\\
    &\|g-Du_0^TDu_0-\delta_0 h_0\|_0\leq r\delta_0.
\end{split} 
\end{equation}

\subsection{Iteration}\label{s:iter} We now want to iteratively apply Proposition \ref{p:stage} to generate the sequence $\{u_q\}$ converging to our desired immersion. For this, we first need to fix our parameters and start with fixing  any desired H\"older exponent 
\[ \theta <\frac{1}{1+2(n-1)}.\]
We now choose $J\in \N$ large enough such that when we iteratively apply Proposition \ref{p:stage} with this choice, our sequence of immersions converges in $C^{1,\theta}$. As we will see, choosing $J\in \N$ so large such that 
\begin{equation}\label{e:Jexponent}
    \beta:=\frac{1}{1+2(n-1) +4n^2/J}\in\,]\theta,\,\frac{1}{1+2(n-1)}\,[ \,
\end{equation}  
will suffice. 

To manage the loss of domain due to mollification in each application of the stage, we choose a sequence $\{V_q\}_{q\in \N}$ of smooth bounded, open domains such that $\Omega \Subset  V_{q+1} \Subset V_q$ for any $q\geq0$,  $\bigcap_q V_q =  \bar\Omega $ and 
\begin{equation}\label{e:distance}
    \mathrm{dist}(V_{q}, \partial V_{q-1}) \geq 2^{-q}\mathrm{dist}(\Omega,\partial V_0) =:d_q \text{ for }q\geq1\,.
\end{equation} 
Define then
\begin{equation}
    \label{e:deltalambda}
    \delta_{q}=\delta_0a^{1-b^q},\quad \lambda_{q}=\lambda_0 a^{\frac{1}{2\beta}(b^q-1)}
\end{equation}
for some $1<b<1+\frac\tau2$ to be chosen in the next section. With this choice, set
\begin{equation}
    \label{e:Lambda}
    K_q=\Lambda(\delta_{q}\delta_{q+1}^{-1})^{\frac1J}
\end{equation}
with constant $\Lambda$ to be fixed. 

\textbf{Claim:} If $a>a_0(\underline u, g, r, n, J, b, M,\tau,\Lambda,  \mathrm{dist}(\Omega,\partial V_0))$ is  sufficiently large, then there exists a sequence of smooth immersions $\{u_q\}_{q\in \N_0}$ 
such that for all $q\geq 0$
\begin{align}
    &u_q\in C^2(\bar V_q,\R^{n+1})\,, \label{e:qass0}\\
    &\|g- Du_q^TDu_q- \delta_q h_0\|_0\leq r\delta_q\,, \label{e:qass1}\\
    &\|u_q\|_2\leq \delta_q^{\sfrac{1}{2}}\lambda_q\,, \label{e:qass3}\\
    &\|u_{q+1}- u_{q} \|_k\leq C\delta_{q}^{\sfrac12}\lambda_q^{-k}, \text{ for } k=0, 1,\label{e:qass4} 
\end{align}
where the  constant $C$ depends on $n, g, \underline u.$

Indeed, by \eqref{e:u0-induction}, $u_0$ from Subsection \ref{s:initial} satisfies the required conditions if we choose $r\leq r_*(n,J,g)$ in the definition of $\mu$ in \eqref{d:mu}. Here, $r_*(n,J,g)$ is the constant in Proposition \ref{p:stage} with $J$ defined as in \eqref{e:Jexponent}. Now   assume $u_q$ has already been constructed. Taking $a$ large we have
 \[\delta_{q+1}\leq \frac{r\delta_q}{|H_0|}\,.\] 
Upon choosing the constant $\Lambda$ in \eqref{e:Lambda} so large that $\Lambda >c_*$ with $c_*$ from Proposition \ref{p:stage} we can apply  Proposition \ref{p:stage}
with
 \[ U=V_q,\, V=V_{q+1},\, u=u_q\,,\, \delta = \delta_q\,,\, \hat\delta = \delta_{q+1}\,,\,\lambda=\lambda_{q}, \,d=d_{q+1},\, K=K_q\] 
 to get $v\in C^2(\bar V_{q+1}, \R^{n+1})$. Set
 \[u_{q+1}:=v\in C^2(\bar V_q, \R^{n+1}).\]
 Then \eqref{e:qass0} holds for $q+1$. To show \eqref{e:qass1} for $q+1$,  using  \eqref{e:metricdeficit}, \eqref{e:deltalambda}, \eqref{e:Lambda} and $b<1+\frac\tau2$, we get 
 \begin{align*}
     \|g- Du_{q+1}^tDu_{q+1}- \delta_{q+1} h_0\|_0\leq& C\delta_q(K_q^{-J}+\delta_q^{\sfrac12})+\lambda_q^{-2}\\
     \leq & C\delta_{q+1}\big(\Lambda^{-J}+\delta_{q+1}^{-1}\delta_q^{\sfrac32}+\delta_{q+1}^{-1}\lambda_q^{-2}\big)\\
     \leq &C\delta_{q+1}\big(\Lambda^{-J}+a^{b-1-\frac\tau2}+a^{-2N\tau-\tau-1+b}\big)\\
     \leq& r\delta_{q+1}\,,
 \end{align*}
after taking $\Lambda\geq 3Cr^{-1/J}$ and $a$ larger. By \eqref{e:C2}, we have
 \begin{align*}
     \|u_{q+1}\|_2\leq& C\delta_{q}^{\sfrac12}\lambda_qd_{q+1}^{-1}K_q^{J(n-1)+n^2}=\delta_{q+1}^{\sfrac12}\lambda_{q+1}A
 \end{align*}
with
\begin{align*}
    A:=& C\frac{\delta_{q}^{\sfrac12}\lambda_q}{\delta_{q+1}^{\sfrac12}\lambda_{q+1}}d_{q+1}^{-1}K_q^{J(n-1)+n^2}\\
     \leq&Ca^{(\frac12-\frac1{2\beta})(b-1)b^q+(b-1)b^q(n-1+\frac{n^2}{J})}\Lambda^{J(n-1)+n^2}2^{q+1}\mathrm{dist}(\Omega,\partial V_0)^{-1}\,.
\end{align*}
By \eqref{e:Jexponent}, we have
\[\frac1{2\beta}-\frac12=n-1+\frac{2n^2}{J}\,,\]
so that 
\[A\leq Ca^{-\frac{n^2}{J}(b-1)b^q}\Lambda^{J(n-1)+n^2}2^q\mathrm{dist}(\Omega,\partial V_0)^{-1}\leq 1\]
after taking $a\geq a_0(\underline u, g, r, n, J, b, \tau,\Lambda , \mathrm{dist}(\Omega,\partial V_0))$ large. Thus we get \eqref{e:qass3}. Moreover, \eqref{e:qass4} follows directly from \eqref{e:C1}. This proves the claim.

\subsection{Convergence}\label{s:convergence} Finally, we show that the sequence constructed in the previous section converges in $C^{1,\theta}(\bar\Omega, \R^{n+1}) $ to an isometric immersion on $\bar \Omega$. By interpolation and $\delta_{q}^{\sfrac12}\lambda_{q}\leq \delta_{q+1}^{\sfrac12}\lambda_{q+1}$, it holds for any $q$
\begin{align*} \|u_{q+1}-u_{q}\|_{1,\theta} &\leq C_\theta \|u_{q+1}-u_{q}\|_1^{1-\theta}\|u_{q+1}-u_{q}\|_2^\theta\\
&\leq C\delta_q^{\frac12(1-\theta)}(\delta_{q+1}^{\sfrac12}\lambda_{q+1})^\theta\\
&=C\delta_0^{\sfrac12}\lambda_0^\theta a^{\frac12-\frac{\theta}{2\beta}}a^{\frac12b^q(\frac{\theta}{\beta}{b}-1-(b-1)\theta)}\,.
\end{align*}
Since $\theta<\beta$, we can take 
\[1<b<\min\left\{1+\frac\tau 2,\, \frac{\beta-\theta\beta}{\theta-\theta\beta}\right\}=1+\min\left\{\frac\tau2,\,\frac{\beta-\theta}{\theta-\theta\beta}\right\}\]
and also $a$ large such that 
\[\|u_{q+1}-u_{q}\|_{1,\theta}\leq \left(\delta_0^{\sfrac12}\lambda_0^\theta a^{\frac12-\frac{\theta}{2\beta}}\right) 2^{-q-1},
\]
and then $\{u_q\}$ is a Cauchy sequence in $C^{1,\theta}(\bar\Omega, \R^{n+1})$. Let $u\in C^{1,\theta}(\bar\Omega, \R^{n+1})$ be the limit. From \eqref{e:qass1} and $\delta_q\to0$ as $q\to\infty,$ we have
\[g-Du^TDu=\lim_{q\to\infty}(g-Du_q^TDu_q)=0.\]
Thus $u$ is an isometric immersion for $g$ on $\bar\Omega.$ Furthermore, by \eqref{e:qass4} for $k=0$, we have
\begin{align*}
    \|u-u_0\|_0\leq& \sum_{q=0}^\infty\|u_{q+1}-u_q\|_0\leq C\sum_{q=0}^\infty\delta_q^{\sfrac12}\lambda_q^{-1}\\
    \leq& C\delta_0^{\sfrac12}\lambda_0^{-1}a^{\frac12+\frac1{2\beta}}\sum_{q=0}^\infty a^{-b^q(\frac12+\frac1{2\beta})}\\
    \leq& C a^{-\frac12\tau-(M+1)\tau}a^{\frac12+\frac1{2\beta}}\sum_{q=0}^\infty a^{-(q+1)(\frac12+\frac1{2\beta})}\\
    \leq& Ca^{-\frac12\tau-(M+1)\tau}\leq\frac\varepsilon2,
\end{align*}
by taking $a$ sufficiently large.  The above inequality together with \eqref{e:u0-induction} implies
\[\|u-\underline u\|_0\leq\|u-u_0\|_0+\|u_0-\underline u\|_0\leq\varepsilon\,.\]
Therefore, the proof is complete.
\newpage

\begin{appendix}
\section{H\"older spaces: interpolation and regularization} Let $\Omega\subset\R^n$ be a bounded open set and $m\in \N$. For a function $f:\Omega\to \R^m$ we define the (semi-)norms
\[\|f\|_0 = [f]_0= \sup_{x\in\Omega} |f(x)|\,,\qquad  [f]_k =\max_{|\sigma|=k}\|D^\sigma f\|_0\,, \text{ for }k\in \N, \]
where for a multi-index $\sigma=(\sigma_1,\ldots,\sigma_n)\in \N_0^{n}$, $D^
\sigma f = \frac{\partial^{\sigma_1}}{\partial x_1}\ldots\frac{\partial^{\sigma_n}}{\partial x_n} f$ and $|\sigma| = \sigma_1+\ldots+\sigma_n$. For $\alpha\in ]0,1]$, the H\"older semi-norms are defined as 
\[[f]_{0,\alpha} = \sup_{\substack{x,y\in \Omega\\ x\neq y}}\frac{|f(x)-f(y)|}{|x-y|^\alpha}\,,\qquad  [f]_{k,\alpha} =\max_{|\sigma|=k} [D^\sigma f]_{0,\alpha}\,. \] 
We sometimes  abbreviate $[f]_{k+\alpha}:=[f]_{k,\alpha}$.
For $k\in \N_0, \alpha\in ]0,1]$ the H\"older norm is given by 
\[ \|f\|_{k,\alpha} = \sum_{j=0}^k[f]_j + [f]_{k,\alpha}\,. \] As usual, $C^{k,\alpha}(\bar \Omega, \R^m)$ denotes the space of $k$-times continuously differentiable functions $f:\Omega\to \R^m$ such that $\|f\|_{k,\alpha}<\infty$. 

We recall the classical interpolation inequalities for the H\"older seminorms and $0\leq r\leq s\leq t$: 
\[ [f]_{s}\leq C[f]_{r}^{\frac{t-s}{t-r}}[f]_t^{\frac{s-r}{t-r}}\,\]
for a constant $C$ only depending on $r,s,t, n,m$ and $\Omega$. Combining these inequalities with the product rule yields the following Leibniz-rule (see \cite{DIS} for a proof): 

If $f_1, f_2\in C^{k, \alpha}(\overline\Omega)$ for any $k\in\mathbb{N}_0$ and $\alpha\in[0, 1]$, then we have
\begin{equation}\label{e:Leibniz}
[f_1f_2]_{k,\alpha}\leq C(\|f_1\|_0[f_2]_{k,\alpha}+[f_1]_{k,\alpha}\|f_2\|_0),
\end{equation}
where the constant only depends on $k,\alpha$ and $\Omega$. 
We also reacll the following lemma for compositions of H\"older maps (see e.g. \cite{DIS}).
    \begin{lemma}
    \label{l:composition}
    Let $U\subset \R^n, V\subset \R^m$ be bounded open sets and $ f:U\to V$ and $F:V\to\R$ be two smooth functions. Then, for every natural number $l>0$ , there is a constant $C$
 (depending only on $l,m,n$) such that
 \begin{align*}
     [F\circ u]_l\leq& C[u]_l\left([F]_1+\|u\|_0^{l-1}[F]_l\right);\\
     [F\circ u]_l\leq& C([F]_1[u]_l+\|DF\|_{l-1}[u]_1^{l-1}).
 \end{align*}
\end{lemma}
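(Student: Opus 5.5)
We would prove both inequalities from the Faà di Bruno formula combined with the interpolation inequalities for H\"older seminorms recalled above, in the standard way (as in \cite{DIS}). For a multi-index of order $l\geq 1$, $D^\sigma(F\circ u)$ is a finite linear combination, with combinatorial coefficients depending only on $l,m,n$, of terms
\[
(D^kF)(u)\big(D^{\sigma_1}u,\ldots,D^{\sigma_k}u\big),\qquad 1\leq k\leq l,\quad |\sigma_i|=j_i\geq 1,\quad j_1+\ldots+j_k=l .
\]
Hence
\[
[F\circ u]_l\leq C\sum_{k=1}^l [F]_k \sum_{j_1+\ldots+j_k=l}\ \prod_{i=1}^k [u]_{j_i}.
\]
Everything then reduces to bounding each monomial $[F]_k\prod_i[u]_{j_i}$ by the right-hand side. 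The term $k=1$ is $[F]_1[u]_l$ and appears on the right of both inequalities, so only $k\geq 2$ needs work.

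\textbf{First inequality.} We interpolate each factor between the $C^0$ and $C^l$ seminorms:
\[
[u]_{j}\leq C\|u\|_0^{1-j/l}[u]_l^{j/l}.
\]
Since $\sum_i j_i=l$, the product is bounded by $C\|u\|_0^{k-1}[u]_l$. For $F$ we interpolate between orders $1$ and $l$:
\[
[F]_k\leq C[F]_1^{\frac{l-k}{l-1}}[F]_l^{\frac{k-1}{l-1}}.
\]
This gives
\[
[F]_k\|u\|_0^{k-1}\leq C\,[F]_1^{\frac{l-k}{l-1}}\big(\|u\|_0^{l-1}[F]_l\big)^{\frac{k-1}{l-1}},
\]
and Young's inequality, with the exponents $\tfrac{l-1}{l-k}$ and $\tfrac{l-1}{k-1}$, bounds this by $C([F]_1+\|u\|_0^{l-1}[F]_l)$. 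Multiplying by $[u]_l$ yields the first estimate.

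\textbf{Second inequality.} Here we keep $\|D^kF\|_0\leq\|DF\|_{l-1}$ for $k\geq2$ and interpolate the $u$ factors between orders $1$ and $l$:
\[
[u]_{j}\leq C[u]_1^{\frac{l-j}{l-1}}[u]_l^{\frac{j-1}{l-1}}.
\]
Summing the exponents gives
\[
\prod_i[u]_{j_i}\leq C[u]_1^{\frac{l(k-1)}{l-1}}[u]_l^{\frac{l-k}{l-1}},
\]
which we rewrite as $\big([u]_1^{l}\big)^{\frac{k-1}{l-1}}[u]_l^{\frac{l-k}{l-1}}$, so that Young's inequality bounds it by $C([u]_1^{l}+[u]_l)$. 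This is where we expect the main obstacle: the result carries the weight $\|DF\|_{l-1}$ rather than $[F]_1$ on the $[u]_l$ term, and one extra power of $[u]_1$, compared with the stated bound $C([F]_1[u]_l+\|DF\|_{l-1}[u]_1^{l-1})$. The first thing we would try is to separate $k=1$ from $k\geq2$ more carefully. If the extra power of $[u]_1$ persists, we would normalize by rescaling the domain of $u$, which is how \cite{DIS} obtains the stated form, or accept an equivalent variant adapted to the paper's applications. In those applications $[u]_1$ is bounded by a constant depending on $\rho$, as in \eqref{e:C1bound}, so such variants suffice for Lemmas \ref{l:normals} and \ref{l:tangential}.

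\textbf{Constants.} The constants depend only on $l,m,n$, given that the interpolation inequalities hold on $U$ and on a convex set containing the range of $u$ (as with $V_\rho$ in the proof of Lemma \ref{l:normals}). We would state this geometric assumption on the domains explicitly, since it is the only non-routine hypothesis.
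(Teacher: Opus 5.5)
The paper gives no proof of this lemma; it only cites \cite{DIS}. Your Fa\`a di Bruno plus interpolation plus Young scheme is the standard route, and your exponent bookkeeping for the first inequality is correct. Nevertheless there are two genuine problems.

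\textbf{First problem: the interpolation inequalities fail on bounded sets.} You use $[u]_j\le C\|u\|_0^{1-j/l}[u]_l^{j/l}$, $[u]_j\le C[u]_1^{\frac{l-j}{l-1}}[u]_l^{\frac{j-1}{l-1}}$ and $[F]_k\le C[F]_1^{\frac{l-k}{l-1}}[F]_l^{\frac{k-1}{l-1}}$, in the form recalled in the appendix. These are false on every bounded open set, convex or not. Whenever the exponent on the top-order seminorm is positive, a polynomial of degree $j<l$ (resp.\ $k<l$) makes the left-hand side positive and the right-hand side zero.

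They are Landau--Kolmogorov inequalities and hold on the whole space (for bounded $u$, resp.\ bounded $Du$, $DF$). So the ``geometric assumption'' you propose to state cannot rescue the argument for bounded $U,V$. In fact the first inequality of the lemma is itself false for bounded $U$: take $U=(0,1)$, $V=(-1,2)$, $u(x)=x$, $F(y)=y^2$, $l=2$. Then $[F\circ u]_2=2$, while the right-hand side vanishes since $[u]_2=0$. Your argument, with the fix below, proves the lemma for $u$ defined on $\R^n$ and $F$ on $\R^m$. On bounded sets, lower-order terms (or full norms) must enter.

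\textbf{Second problem: your ``main obstacle'' conflates two different issues.} The weight $\|DF\|_{l-1}$ in front of $[u]_l$ is only an artifact of not interpolating $F$. If you interpolate $F$ exactly as in your first part, you get for $2\le k\le l$
\[
[F]_k\prod_{i=1}^k[u]_{j_i}\le C\big([F]_1[u]_l\big)^{\frac{l-k}{l-1}}\big([F]_l[u]_1^{l}\big)^{\frac{k-1}{l-1}}\le C\big([F]_1[u]_l+\|DF\|_{l-1}[u]_1^{l}\big).
\]
The extra power of $[u]_1$, however, is not an artifact: $[u]_1^{l}$ is the correct exponent, and the stated $[u]_1^{l-1}$ is simply false. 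Under $x\mapsto sx$, the quantities $[F\circ u]_l$, $[u]_l$ and $[u]_1^l$ all scale like $s^l$, while $[u]_1^{l-1}$ scales like $s^{l-1}$, so no rescaling can produce the stated form. Concretely, $u(x)=tx$ on $(0,1)$ and $F=\sin$ with $l\ge 2$ give $[F\circ u]_l=t^l$ for $t\ge\pi$, against a right-hand side of at most $Cl\,t^{l-1}$.

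\textbf{Your justification for the applications.} Your remark that a variant suffices for Lemmas \ref{l:normals} and \ref{l:tangential} mixes up the roles of the quantities. There the inner map is $Du$, and \eqref{e:C1bound} bounds its sup norm, not its first seminorm $[u]_2$, which is of size $\delta^{1/2}\mu$. What actually makes those applications work is that hypotheses like \eqref{a:stepu} control every intermediate seminorm. Hence your raw Fa\`a di Bruno bound alone, with no interpolation, gives $[\zeta]_k\le C\sum_{k'\ge 1}\delta^{k'/2}\mu^k\le C\delta^{1/2}\mu^k$ for $\delta\le 1$.
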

Finally, we recall the standard mollification estimates. For $f:\Omega\to \R^m$, let $f\ast\varphi_\ell$ denote the convolution of $f$ with  a standard, radially symmetric mollifier  $\varphi_{\ell}$, i.e. $\varphi_\ell (x) = \ell^{-n}\varphi( x/\ell)$ for some non-negative, radially symmetric $\varphi\in C^\infty_c(B_1)$ with $\int \varphi \,dx = 1$. We then have the following estimates (see \cite{CDS} for a proof).

\begin{lemma}\label{l:mollification}
For any $r, s\geq0,$ and $0<\alpha\leq1,$ we have
\begin{align}
&[f*\varphi_l]_{r+s}\leq Cl^{-s}[f]_r,\\
&\|f-f*\varphi_l\|_r\leq Cl^{2-r}[f]_2, \text{ if } 0\leq r\leq2,\\
&\|(f_1f_2)*\varphi_l-(f_1*\varphi_l)(f_2*\varphi_l)\|_r\leq Cl^{2\alpha-r}\|f_1\|_\alpha\|f_2\|_\alpha,\label{e:quadraticmoll}
\end{align}
with constant $C$ depending only on $n, s, r, \alpha$ and $\varphi.$
\end{lemma}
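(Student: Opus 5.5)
The plan is to prove the three estimates of Lemma~\ref{l:mollification} directly from the convolution formula. For $f$ defined on a neighbourhood of $\bar\Omega$ of width $\ell$ (as in the applications, where mollification happens on $V\Subset U$), all quantities make sense. Throughout I use $\|D^s\varphi_\ell\|_{L^1}=\ell^{-s}\|D^s\varphi\|_{L^1}$ for integer $s$. I also use the two moment identities $\int\varphi_\ell=1$ and $\int y\,\varphi_\ell(y)\,dy=0$; the latter comes from radial symmetry.

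\textbf{First estimate.} For integers $r,s$ write $D^{r+s}(f\ast\varphi_\ell)=D^rf\ast D^s\varphi_\ell$. Young's inequality then gives $[f\ast\varphi_\ell]_{r+s}\le C\ell^{-s}[f]_r$. When $s\geq1$, $\int D^s\varphi_\ell=0$, so $D^rf$ may be replaced by $D^rf-D^rf(x)$ inside the convolution. This upgrades the bound to Hölder seminorms: $\|D^{\lfloor r\rfloor}f\ast D^s\varphi_\ell\|_0\le C\ell^{-s+\{r\}}[f]_r$. Non-integer $r+s$ (or non-integer $s$) is then handled by the interpolation inequalities recalled above, interpolating between neighbouring integer orders.

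\textbf{Second estimate.} At $r=0$, Taylor expansion gives
\[f(x)-f\ast\varphi_\ell(x)=\int\big(f(x)-f(x-y)\big)\varphi_\ell(y)\,dy=\int\Big(Df(x)y-\int_0^1(1-t)D^2f(x-ty)[y,y]\,dt\Big)\varphi_\ell(y)\,dy .\]
The linear term vanishes by the first moment identity, and $|y|\le\ell$ on the support of $\varphi_\ell$, so the remainder is at most $C\ell^2[f]_2$. For $r=1$ apply the zeroth-order mean-value estimate to $Df$, which gives $\|Df-Df\ast\varphi_\ell\|_0\le\ell[f]_2$. For $r=2$ the bound $\le2[f]_2$ is trivial. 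Intermediate $r\in(0,2)$ again follow by interpolation between these integer cases.

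\textbf{Third estimate (commutator).} This is the only step needing care. The key is the symmetric double-integral identity
\[(f_1f_2)\ast\varphi_\ell-(f_1\ast\varphi_\ell)(f_2\ast\varphi_\ell)(x)=\tfrac12\iint\big(f_1(y)-f_1(z)\big)\big(f_2(y)-f_2(z)\big)\varphi_\ell(x-y)\varphi_\ell(x-z)\,dy\,dz,\]
which is checked by expanding the product and using $\int\varphi_\ell=1$. On the support of the integrand we have $|y-z|\le2\ell$. Hence each difference is bounded by $(2\ell)^\alpha[f_i]_{0,\alpha}$, which yields the case $r=0$ with the factor $\ell^{2\alpha}$. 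For integer $r\geq1$, all $x$-derivatives fall on the two mollifier factors; each derivative costs $\ell^{-1}$ in $L^1$, while the difference factors are untouched. This gives $C\ell^{2\alpha-r}\|f_1\|_\alpha\|f_2\|_\alpha$. Non-integer $r$ follows by interpolation.

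I expect the main obstacle to be finding this representation, since it makes the derivative count trivial. The naive single-integral form, in which one subtracts $(f_1-f_1\ast\varphi_\ell)(f_2-f_2\ast\varphi_\ell)$, forces derivatives onto the $f_i$, and those derivatives are not controlled by $\|f_i\|_\alpha$.
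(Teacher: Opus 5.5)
The paper gives no argument of its own for this lemma; it only refers to \cite{CDS}. Your proof is correct and self-contained. For the first two estimates you use Young's inequality together with the vanishing moments $\int D^s\varphi_\ell=0$ ($s\ge1$) and $\int y\,\varphi_\ell(y)\,dy=0$. For the commutator, your symmetric double-integral identity (which checks out on expanding) puts every $x$-derivative on the mollifier factors, so integer $r$ is immediate. This is a symmetric form of the usual device: the commutator is unchanged under $f_i\mapsto f_i-f_i(x_0)$, and one then estimates at $x_0$. Compared with the bare citation, your route makes the constants explicit. It also yields a slightly stronger bound, with $[f_1]_{0,\alpha}[f_2]_{0,\alpha}$ in place of $\|f_1\|_\alpha\|f_2\|_\alpha$.

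Two bookkeeping points need tidying.

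\emph{Interpolation in the first estimate.} Interpolating between neighbouring integer orders fails when $r=k+\beta$ is fractional and $r+s<k+1$, including the case $s=0$. The reason is that $[f\ast\varphi_\ell]_k$ is not controlled by $[f]_{k,\beta}$: take $f$ a homogeneous polynomial of degree $k$. In that range, interpolate instead between two bounds:
\begin{itemize}
\item $[f\ast\varphi_\ell]_{k,\beta}\le[f]_{k,\beta}$, which is immediate because $f\ast\varphi_\ell$ is an average of translates of $f$;
\item your upgraded bound $[f\ast\varphi_\ell]_{k+1}\le C\ell^{\beta-1}[f]_{k,\beta}$.
\end{itemize}
This gives exactly $C\ell^{-s}[f]_r$.

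\emph{Full norms.} The second and third estimates are stated with full norms $\|\cdot\|_r$. The lower-order seminorms carry higher powers of $\ell$, so they are absorbed only when $\ell\le 1$. This holds in the application, where $\ell=d/(\hat C\lambda)\le 1$, but it should be stated.
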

\end{appendix}

\bibliographystyle{plain}
\bibliography{bib}
\end{document}